\DeclareMathOperator{\soft}{soft}
\DeclareMathOperator{\sign}{sign}
\providecommand{\norm}[1]{\lVert#1\rVert}
\providecommand{\abs}[1]{\lvert#1\rvert}
\newcommand{\inv}{^{-1}}
\newcommand{\myJ}{ \mathrm{j} }
\newcommand{\myE}{ \mathrm{e} }
\newcommand{\om}{\omega}
\newcommand{\lam}{{\lambda} }
\newcommand{\half}{\frac{1}{2}}
\newcommand{\iter}[1]{ ^{(#1)} }
\newcommand{\tr}{^{\mathsf{T}}}			
\newcommand{\opt}{^{\mathsf{opt}}}			
\newcommand{\RR}{\mathbb{R}}
\newcommand{\ZZ}{\mathbb{Z}}
\renewcommand{\geq}{\geqslant}
\renewcommand{\le}{\leqslant}
\renewcommand{\ge}{\geqslant}
\newtheorem{defn}{Definition}
\newtheorem{prop}{Proposition}
\newtheorem{lemma}{Lemma}
\newtheorem{theorem}{Theorem}
\newcommand{\mge}{\succcurlyeq}		
\newcommand{\mle}{\preccurlyeq}		
\newcommand{\ellip}{\mathcal{E}}
\newcommand{\maj}{ ^\mathsf{M} }
\newcommand{\SP}{A}		
\newcommand{\SN}{ \Theta }		
\newcommand{\la}{$ \ell_1 $}
\title{Enhanced Sparsity by Non-Separable Regularization}
	\author{Ivan Selesnick and \.{I}lker Bayram
		\thanks{I. Selesnick, Electrical and Computer Engineering Dept,
	Tandon School of Engineering, New York University, NY, USA.}
		\thanks{\.{I} Bayram, Electronics and Communication Engineering Dept, Istanbul Technical University, Istanbul, Turkey.}
		\thanks{Email: selesi@nyu.edu, ilker.bayram@itu.edu.tr}
		\thanks{This research was supported by NSF under grant CCF-1525398.}%
		\thanks{Last Edit:~\today~\currenttime.}
	}
	\date{}
	\renewcommand\footnotemark{}	
\begin{document}

\maketitle

\begin{abstract}

This paper develops a convex approach for sparse one-dimensional deconvolution
that improves upon L1-norm regularization, the standard convex approach.
We propose a sparsity-inducing non-separable non-convex bivariate penalty function
for this purpose.
It is designed to enable the convex formulation of ill-conditioned linear inverse problems with quadratic data fidelity terms.
The new penalty overcomes limitations of separable regularization. 
We show
how the penalty parameters should be set to ensure that the objective function is convex,
and
provide an explicit condition to verify the optimality of a prospective solution.
We present an algorithm (an instance of forward-backward splitting)
for sparse deconvolution using the new penalty.

\end{abstract}

\section{Introduction}

Methods for sparse regularization
can be broadly categorized as convex or non-convex.
In the standard convex approach, the regularization terms (penalty functions) are convex; and the objective function, 
consisting of both data fidelity and regularization terms, is convex  \cite{Palomar_2010, Bach_2012_NOW}.
The convex approach has several benefits: 
the objective function is free of extraneous local minima,
and
globally convergent optimization algorithms can be leveraged  \cite{Boyd_convex}.

Despite the attractive properties of convex regularization,
non-convex regularization often performs better \cite{Candes_2008_JFAP, Bruckstein_2009_SIAM, Nikolova_2011_chap}.
Classical and recent examples are in edge preserving tomography
\cite{Geman_1992_PAMI, Charbonnier_1997_TIP, Nikolova_1999_TIP, Nikolova_2010_TIP}
and compressed sensing \cite{Trzasko_2009_TMI, Chartrand_2009_ISBI, Chartrand_2013_Asilomar},
respectively.
In the non-convex approach, penalty functions are non-convex
as they can be designed to induce sparsity more effectively than convex ones.
However, the convexity of the objective function is generally sacrificed.
Consequently, non-convex regularization is hampered by complications:
the objective function
will generally possess many sub-optimal local minima in which optimization algorithms can become entrapped. 

It turns out, non-convex penalties can be utilized
without giving up the convexity of the objective function and corresponding benefits.
This is achieved by carefully specifying the penalty in accordance with the data fidelity term,
as described by Blake, Zimmerman, and Nikolova
 \cite{Blake_1987, Nikolova_1998_ICIP, Nikolova_1999_TIP, Nikolova_2010_TIP}.
In recent work, a class of sparsity-inducing non-convex penalties has been developed 
to formulate convex objective functions
and applied to several signal estimation problems \cite{Parekh_2015_SPL, Ding_SPL_2015, Selesnick_SPL_2015, Selesnick_2014_TSP_MSC, Chen_2014_TSP_ncogs, Bayram_2014_ICASSP,Parekh_2015_arXiv_FLSA, Lanza_2015_LNCS, Bayram_2015_SPL}.
This approach maintains the benefits of the convex framework (absence of spurious local minima, etc.),
yet estimates sparse signals more accurately than convex regularization (e.g., the \la\ norm)
due to the sparsity-inducing properties of non-convex regularization.
However, this previous work considers only \emph{separable} (additive) penalties, 
which have fundamental limitations.

In this paper, we introduce a parameterized sparsity-inducing non-separable non-convex bivariate penalty function.
The penalty is designed to enable the convex formulation of ill-conditioned linear inverse problems with quadratic data fidelity terms.
The new penalty overcomes limitations of separable non-convex regularization. 
We show
how the penalty parameters should be set to ensure the objective function is convex.
We also show
how this bivariate penalty can be incorporated into linear inverse problems of $ N $ variables ($ N > 2 $),
and we provide an explicit condition to verify the optimality of a prospective solution.
We derive two iterative algorithms for optimization using the new penalty, 
and demonstrate its effectiveness for one-dimensional sparse deconvolution.

\subsection{Basic problem statement}

We consider the problem of bivariate sparse regularization (BISR) 
with a quadratic data fidelity term:
\begin{equation}
	\label{eq:biprob}
	\hat x = 
	\arg \min_{x \in \RR^2} \bigg\{ f(x) = \half \norm{ y - H x }_2^2 + \lam \psi(x) \bigg\}	
\end{equation}
where $ \lam > 0 $,
$ H $ is a $ 2 \times 2 $ matrix,
and $ \psi \colon \RR^2 \to \RR $ is a bivariate penalty. 
[In Sec.~\ref{sec:recon}, it will be shown how to 
extend this bivariate problem to an $ N $-point linear inverse problem.] 
In this paper, we suppose $ H\tr H $ is Toeplitz,
as this naturally arises in deconvolution problems. 
Correspondingly, 
we write $ H\tr H = K(\gamma) $ where $ \gamma = (\gamma_1, \gamma_2) \in \RR^2 $ and
\begin{equation}
	\label{eq:defK}
	K(\gamma)
	:=
	\half
	\begin{bmatrix}
		\gamma_1 + \gamma_2 &  \gamma_1 - \gamma_2
		\\
		\gamma_1 - \gamma_2 & \gamma_1 + \gamma_2
	\end{bmatrix}
	= Q \, \Gamma \, Q\tr
\end{equation}
where
\begin{equation}
	\label{eq:defQG}
	Q =
	\frac{1}{\sqrt{2}}
	\begin{bmatrix}
		1 & 1
		\\
		1 & -1 
	\end{bmatrix},
	\quad
	\Gamma = 
	\begin{bmatrix}
		\gamma_1 & 0
		\\
		0 & \gamma_2 
	\end{bmatrix}.
\end{equation}
This is an eigenvalue decomposition of $ H\tr H $.
The parameters $ \gamma_1 $ and $ \gamma_2 $ are the eigenvalues 
of the positive semidefinite matrix $ H\tr H $;
hence, they are nonnegative. 

First, suppose $ \psi $ is 
a separable convex penalty, e.g., $ \psi(x) = \abs{x_1} + \abs{x_2} $
corresponding to the \la\ norm (Fig.~\ref{fig:pensep}(a)).
Then the objective function $ f $ in \eqref{eq:biprob} is convex,
but it does not induce sparsity as effectively as non-convex penalties can. 
In particular, $ \ell_1 $ norm regularization tends to underestimate the true signal values.

Second, suppose $ \psi $ is a separable non-convex penalty,
i.e., $ \psi(x) = \phi(x_1) + \phi(x_2) $, as illustrated in Fig.~\ref{fig:pensep}(b).
Then the objective function $ f $ is convex only if $ \phi $ is suitably chosen.
In particular, 
for the class of penalties we consider, 
the objective function is convex only if $ \phi''(t) \ge -\min\{\gamma_1, \gamma_2\}/\lam $,
where $ \gamma_1 $ and $ \gamma_2 $ are the eigenvalues of $ H\tr H $.
(See Lemma \ref{lemma:pensep}, \cite{Selesnick_2014_TSP_MSC}, and \cite{Blake_1987}.)
When $ H $ is singular, the minimum eigenvalue of $ H\tr H $ is zero
and $ \phi $ must be convex
(i.e., it induces sparsity relatively weakly).
Hence, if $ H $ is singular and we restrict the penalty $ \psi $ to be separable,
then we can not use sparsity-inducing non-convex regularization without 
sacrificing the convexity of the objective function $ f $.
Indeed, when separable non-convex penalties are utilized for their strong sparsity-inducing properties,
the convexity of $ f $ is generally sacrificed.

Our aim is to prescribe a non-separable penalty $ \psi $ so that 
the objective function is guaranteed to be convex even though the penalty $ \psi $ itself is not.
Such a penalty will be given in Sec.~\ref{sec:pen}. 
It turns out, when we utilize a \emph{non-separable} non-convex penalty to strongly induce sparsity, 
we need \emph{not} sacrifice the convexity of the objective function $ f $,
even when $ H $ is \emph{singular}.

\begin{figure}[t]
	\centering
		\begin{tabular}{c@{\hspace{4em}}c}
		\includegraphics{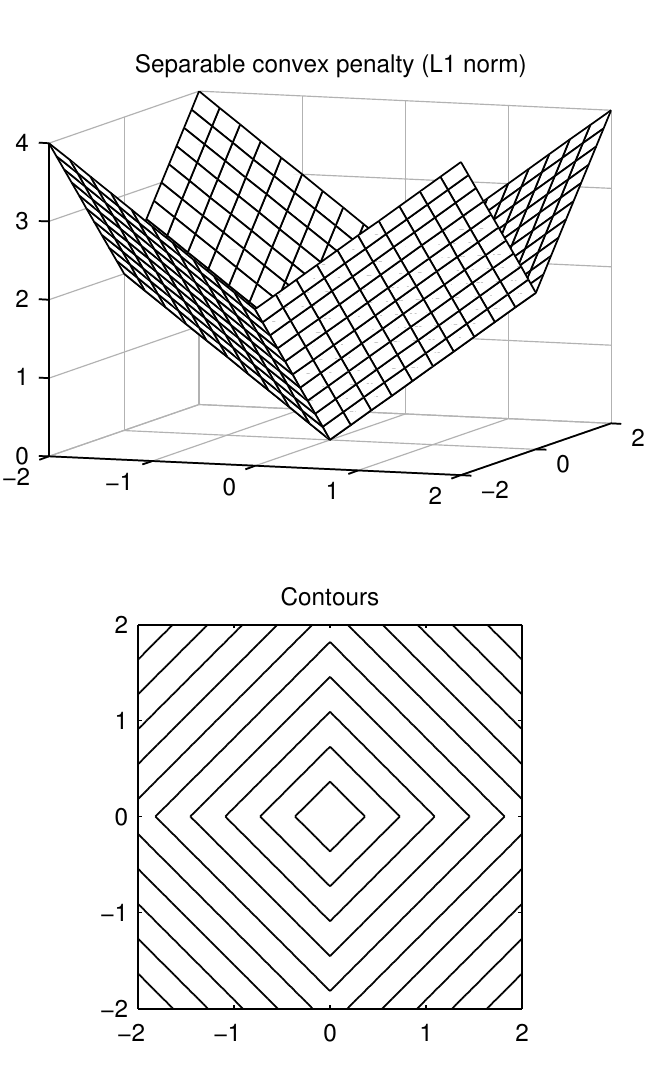}
		&
		\includegraphics{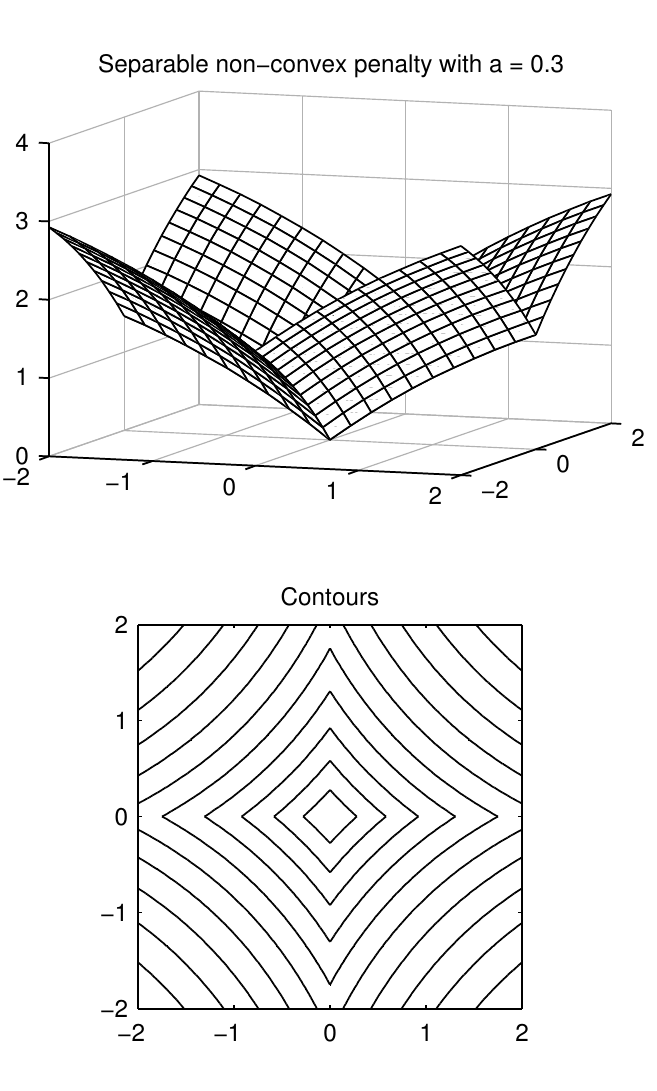}
		\\
		(a)
		&
		(b)
		\end{tabular}
	\caption{
		Separable bivariate penalties, convex and non-convex.
	}
	\label{fig:pensep}
\end{figure}

\subsection{Related work}

The design of non-convex regularizers ensuring convexity of an objective function
was
proposed as part of the Graduated Non-Convexity (GNC) approach \cite{Blake_1987, Nikolova_1999_TIP, Nikolova_2010_TIP}
and for binary image estimation \cite{Nikolova_1998_ICIP}.
Most methods for non-convex sparse regularization do not aim to maintain convexity of the objective function.
The $ \ell_p $ pseudo-norm ($ 0 \le p < 1 $) is widely used,
but other regularizers have also been advocated
\cite{Zou_2008_AS, Lorenz_2007, Gasso_2009_TSP, Gholami_2011_TSP, Voronin_2013_ICASSP, Malioutov_2014_icassp,  Geman_1992_PAMI, Mohimani_2009_TSP, Marnissi_2013_ICIP, Chartrand_2009_ISBI, Chartrand_2014_ICASSP}.
Several algorithms have been developed specifically for the $ \ell_0 $ pseudo-norm:
matching pursuit \cite{Mallat_1998},
greedy $\ell_1$ \cite{Kozlov_2010_geomath},
iterative hard thresholding and its variations \cite{Blumensath_2012_SP, Kingsbury_2003_ICIP,Portilla_2007_SPIE, Qiu_2012_TSP_ECME, Foucart_2010_SIAM, Wang_2010_SIAM},
smoothed $\ell_0$, \cite{Mohimani_2009_TSP},
and single best replacement \cite{Soussen_2011_TSP}.

The penalty developed here is different from regularizers described in the literature, 
most of which are separable or are compositions of separable regularizers and linear operators.
Non-separable penalties
(e.g., mixed norms \cite{Marnissi_2013_ICIP, SenSel-02-tsp, Bach_2012_NOW})
are generally used to capture statistical relationships among signal values
(possibly linearly transformed%
\footnote{Isotropic two-dimensional total variation,
which induces joint sparsity of horizontal and vertical gradients, 
exemplifies this type of regularization.})
or to induce structured sparsity;
they are not intended for 
unstructured, uncorrelated, `pure'  sparsity.
Moreover, most sparsity-inducing penalties 
are neither
parameterized nor utilized in the way undertaken here: namely, to induce sparsity subject to the constraint that the objective function is convex. 

As the proposed method leads to a convex problem, 
convex sparse optimization methods may be used or adapted for its solution.
Representative algorithms are
the iterative shrinkage/thresholding algorithm (ISTA/FISTA)
\cite{Fig_2003_TIP,Beck_2009_SIAM}, 
proximal methods
\cite{Combettes_2008_SIAM, Combettes_2011_chap},
alternating direction method of multipliers (ADMM)
\cite{Afonso_2010_TIP_SALSA, Goldstein_2009_SIAM},
and 
majorization-minimization (MM) \cite{FBDN_2007_TIP, Lange_2014_ISR}.

Several algorithms are suitable for general non-convex sparse regularization problems, 
such as
iteratively reweighted least squares (IRLS) \cite{Harikumar_1996_cnf, Daubechies_2010_CPAM},
iteratively reweighted \la\ (IRL1) 
\cite{Wipf_2010_TSP, Asif_2013_TSP, Candes_2008_JFAP},
FOCUSS
\cite{Rao_2003_TSP},
related algorithms
\cite{Mourad_2010_TSP, Tan_2011_TSP, Montefusco_2013_SP},
non-convex MM
\cite{Gholami_2011_TSP, Voronin_2013_ICASSP, Malioutov_2014_icassp, Chouzenoux_2013_SIAM},
extensions 
of GNC
\cite{Nikolova_1999_TIP, Nikolova_2010_TIP, Mohimani_2009_TSP},
and other methods
\cite{Charbonnier_1997_TIP, Gasso_2009_TSP, Chartrand_2009_ISBI}.
Non-convex regularization has also been used for blind deconvolution \cite{Repetti_2015_SPL}
and low-rank plus sparse matrix decompositions \cite{Chartrand_2012_TSP}.
Convergence of these algorithms are generally to local optima only.
However, 
conditions for convergence to a global minimizer, 
or to guarantee that all local minimizers are near a global minimizer, 
have been recently reported \cite{Chen_2014_TSP_Convergence, Loh_2015_JMLR}.
Whereas Refs.~\cite{Chen_2014_TSP_Convergence, Loh_2015_JMLR} focuses on convergence guarantees for given penalties, 
we focus on the design of penalties.

\subsection{Notation}
\label{sec:notation}

We write the vector $ x \in \RR^N $ as $ x = (x_1, \ x_2, \ \dots, \ x_{N}) $.
Given $ x \in \RR^N $, we define $ x_n = 0 $ for $ n \notin \{ 1 , 2 , \dots, N \} $.
(This simplifies expressions involving summations over $ n $.)
The \la\ norm of $ x \in \RR^N $ is defined as $ \norm{ x }_1 = \sum_n \abs{ x_n } $.
If the matrix $ A $ is positive semidefinite, we write $ A \mge 0 $. 
If the matrix $ A - B $ is positive semidefinite, we write $ A \mge B $. 

\begin{table}
	\caption{\label{table:penalties}%
	Penalties}
	\begin{center}
	\begin{tabular}{@{} l @{}} 
		\toprule 
		$
		\displaystyle
		\phi(t; a) =
		\frac { \abs{ t } } { 1 + a \abs{ t } / 2 } , 
		\quad a  \ge 0
		$
		\\[1.5em]
		$
	\phi(t; a) =
	\begin{cases}
		 \frac{1}{a} \, \log(1 + a \abs{t}), \ & a > 0
		\\
		\abs{t},	& a = 0
	\end{cases}
		$
		\\[1.5em]
		$
	\phi(t; a) =
	\begin{cases}
		\frac{2}{a\sqrt{3}}
			\Bigl(
				\tan\inv \Bigl( \frac{1+2 a \abs{t}}{\sqrt{3}} \Bigr) - \frac{\pi}{6}
			\Bigr),
		\ & a > 0
		\\
		\abs{t},	& a = 0
	\end{cases}
		$
		\\
	\bottomrule 
	\end{tabular} 
\end{center}
\end{table}

\section{Univariate Penalties}
\label{sec:pen1}

The bivariate penalty to be given in Sec.~\ref{sec:pen} will be 
based on a parameterized non-convex univariate penalty function
$ \phi(\, \cdot \, ; a) \colon \RR \to \RR $ 
with parameter $ a \ge 0 $.
We shall assume $ \phi $ has the following properties:
\begin{enumerate}[ P1)]
\setlength{\itemsep}{0cm}%
\setlength{\parskip}{0cm}%
  \item
$ \phi( \, \cdot \,; a) $ is continuous on $ \RR $
\item
$ \phi(\, \cdot \,; a) $ is twice continuously differentiable, increasing, and concave on $ \RR_+ $ 
\item
$ 	\phi(0; a)  = 0 $ 
\item
$ \phi(t; 0) = \abs{t} $
\item
$ 	\phi(-t; a)  = \phi(t; a) $ 
\item
$ \phi'(0^+; a) = 1 $ 
\item
$ \phi''(0^+; a) = -a  $
\item
$ \phi''(t; a) \geq -a  $ for all $ t \neq 0 $
\item
$ \phi(t; a) $ is decreasing and convex in $ a $.
\item
$ \phi(t; a) = (b/a) \, \phi\big(at / b; b\big) $ for $ a, b > 0 $  
[Scaling].
\end{enumerate}

It follows from symmetry that $ \phi'(-t) = -\phi'(t) $ and $ \phi''(-t) = \phi''(t) $.
The scaling property of $ \phi $ also induces a scaling property of $ \phi' $ and $ \phi'' $. 
Namely,
\begin{align}
	\label{eq:phidscale}
	\phi'(t; a) & = \phi'(a t / b;  b), 
	\\
	\label{eq:phiddscale}
	\phi''( t ; a)
	& = 
	(a/b) \, \phi''( a t / b ; b).
\end{align}

Table \ref{table:penalties} lists several penalty functions that satisfy the above properties.
For example,
the rational \cite{Geman_1992_PAMI},
logarithmic, and arctangent functions
\cite{Candes_2008_JFAP, Nikolova_2011_chap, Selesnick_2014_TSP_MSC}
(when suitably normalized).
The arctangent penalty is illustrated in Fig.~\ref{fig:penalties_1D_multi} for several values of $ a $.
For larger $ a $, the penalty functions increase more slowly, are more concave on the positive real line,
and induce sparsity more strongly (i.e., by mildly penalizing large values). 
A comparison of the three penalties listed in Table \ref{table:penalties} is illustrated in 
Figure~1 of Ref.~\cite{Chen_2014_TSP_ncogs} for a fixed value of $ a $.
Of the three penalties, 
the arctangent penalty increases the slowest for a fixed value of $ a $.

We mention that we do not use the simpler form of the arctangent penalty, 
$	\phi( t ; a ) = ( 1 / a ) \tan\inv( a \abs{t} ) $,
as it does not satisfy $ \phi''(0^+, a) = -a $ which is property P7 listed above. 

\begin{figure}[t]
	\centering
	\includegraphics{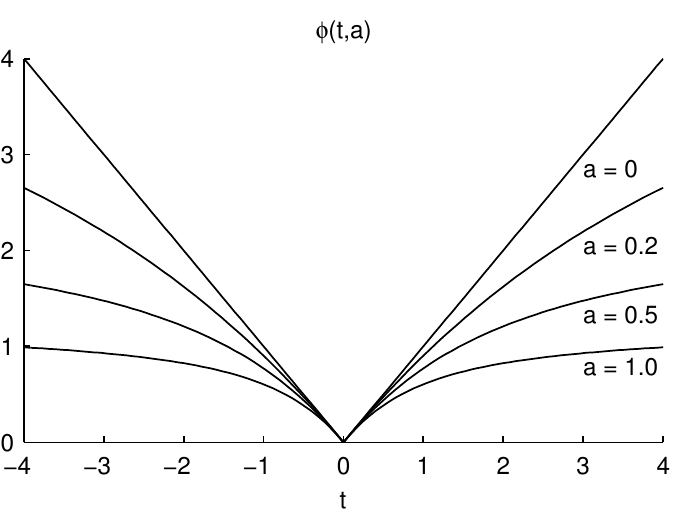}
	\caption{
		Penalty $ \phi( t ; a) $ for several values of $ a $.
	}
	\label{fig:penalties_1D_multi}
\end{figure}

\begin{figure}
	\centering
		\includegraphics{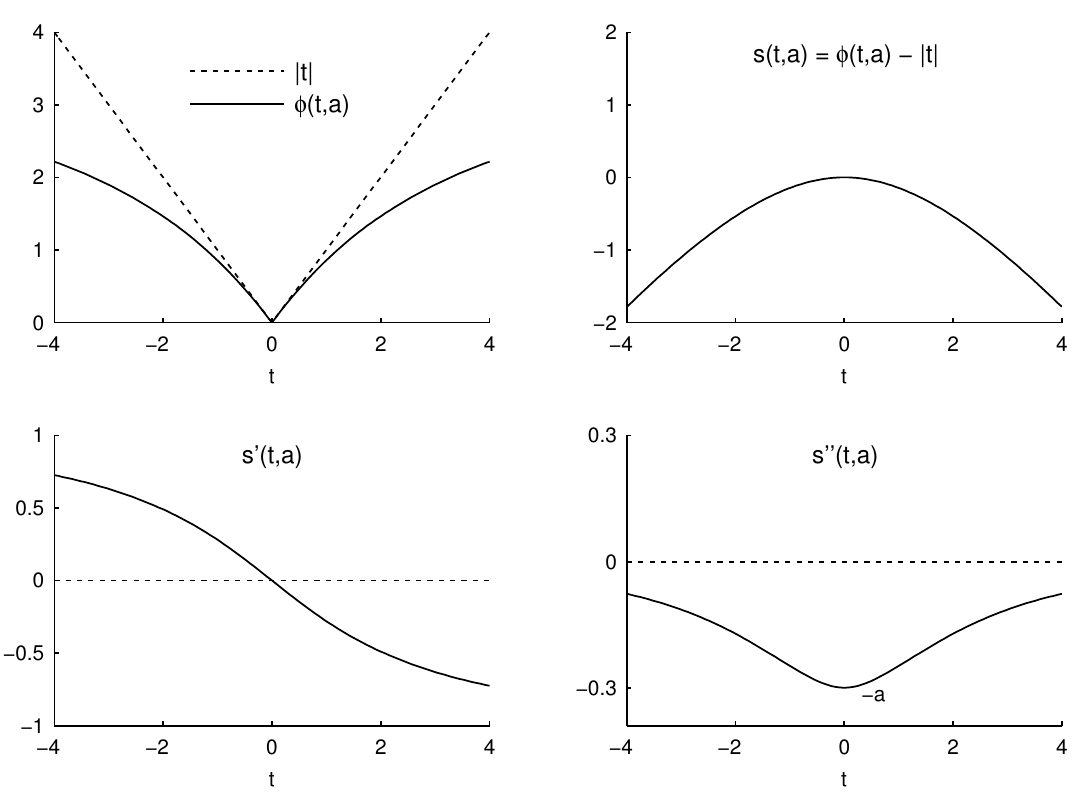}
	\caption{
		A univariate penalty $ \phi $, its corresponding function $ s $, 
		and the first and second-order derivatives of $ s $.
		The function $ s $ is twice continuously differentiable and concave.
	}
	\label{fig:penalties_1D}
\end{figure}

Corresponding to a penalty $ \phi $ having the above properties, we define a smooth concave function.

\begin{defn}
\label{def:s}
Let $ \phi \colon\RR \to \RR $ be a penalty function satisfying the properties listed above. 
For $ a \ge 0 $, 
we define $ s \colon \RR \to \RR $,
\begin{equation}
	\label{eq:defs}
	s(  t  ; a ) = \phi(  t  ; a ) - \abs{  t  }.
\end{equation}
\end{defn}

Figure~\ref{fig:penalties_1D} illustrates the function $ s $ corresponding to the arctangent penalty for $ a = 0.3 $. 
The following proposition follows straightforwardly \cite{Parekh_2015_SPL}.

\begin{prop}
\label{prop:sfun1d}
Let $ a \ge 0 $.
Let $ \phi \colon\RR \to \RR $ be a penalty function satisfying the properties listed above. 
The function $ s \colon\RR \to \RR $ in Definition \ref{def:s}
is twice continuously differentiable, concave, and satisfies
\begin{equation}
	\label{eq:sbound}
	-a \le s''( t ; a) \le 0.
\end{equation}
\end{prop}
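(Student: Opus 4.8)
The plan is to analyze $s$ separately for $t>0$, $t<0$, and $t=0$, using the symmetry property P5 to reduce to the case $t>0$, and then to address the single delicate point at the origin. For $t>0$ we have $|t|=t$, so $s(t;a)=\phi(t;a)-t$, and hence $s'(t;a)=\phi'(t;a)-1$ and $s''(t;a)=\phi''(t;a)$ wherever these derivatives exist. By P2, $\phi(\cdot;a)$ is twice continuously differentiable on $\RR_+$, so $s$ is $C^2$ on $(0,\infty)$; by P2 again, $\phi$ is concave there, so $s''(t;a)=\phi''(t;a)\le 0$, and by P8, $s''(t;a)=\phi''(t;a)\ge -a$. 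Thus the bound \eqref{eq:sbound} holds for all $t>0$. The symmetry $s(-t;a)=s(t;a)$, which follows from P5 and $|-t|=|t|$, transfers both the $C^2$ property and the bound to $t<0$: differentiating the identity twice gives $s''(-t;a)=s''(t;a)$.

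The heart of the argument is the behavior at $t=0$, where the $|t|$ term is nonsmooth but we claim the nonsmoothness cancels. First I would show $s$ is $C^1$ at $0$: from P3 and P4-free reasoning, $s(0;a)=\phi(0;a)-0=0$, and the one-sided derivatives are $s'(0^+;a)=\phi'(0^+;a)-1=0$ by P6, while by symmetry $s'(0^-;a)=-s'(0^+;a)=0$; since both one-sided derivatives equal $0$, $s$ is differentiable at $0$ with $s'(0;a)=0$, and combined with continuity of $\phi''$ (hence of $s'=\phi'-1$) on each side and the matching values, $s'$ is continuous on all of $\RR$. Next, for the second derivative at $0$: $s''(0^+;a)=\phi''(0^+;a)=-a$ by P7, and $s''(0^-;a)=-a$ by symmetry, so the one-sided second derivatives agree; to conclude $s$ is genuinely twice differentiable at $0$ with $s''(0;a)=-a$ one applies the standard lemma that a function which is $C^1$ on a neighborhood of a point and whose derivative has a two-sided limit of its difference quotient at that point is $C^2$ there (equivalently, use l'Hôpital on $\lim_{t\to0}\frac{s'(t;a)-s'(0;a)}{t}=\lim_{t\to 0}s''(t;a)=-a$, valid since $s'$ is continuous and $s''$ has limit $-a$ from both sides). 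Hence $s$ is $C^2$ on $\RR$ with $s''(0;a)=-a\in[-a,0]$.

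Concavity of $s$ then follows immediately: we have just shown $s\in C^2(\RR)$ with $s''(t;a)\le 0$ everywhere (including $t=0$), which is equivalent to concavity. The lower bound $s''(t;a)\ge -a$ likewise holds at every point, giving \eqref{eq:sbound} in full.

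The only step requiring genuine care is the passage from one-sided information to true twice-differentiability at the origin; everything else is bookkeeping with the listed properties. I expect that obstacle to be mild, since P6 supplies the matching first derivatives (making $s$ genuinely $C^1$ at $0$) and P7 together with continuity of $\phi''$ on $\RR_+$ supplies the matching second derivatives, so the standard derivative-limit lemma closes the gap cleanly. (The edge case $a=0$ is trivial: then $\phi(t;0)=|t|$ by P4, so $s\equiv 0$, which is $C^2$, concave, and satisfies $0\le s''\le 0$.)
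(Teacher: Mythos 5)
Your proposal is correct and fills in, with the natural argument, exactly what the paper leaves implicit: the paper states that the proposition ``follows straightforwardly'' (citing Parekh and Selesnick) and records only the outcome in \eqref{eq:s2a}, namely $s''(t;a)=\phi''(t;a)$ for $t\neq 0$ and $s''(0;a)=-a$. Your treatment of the origin --- cancellation of the kink of $\abs{t}$ via P6, matching one-sided second derivatives via P7 and symmetry, and the derivative-limit (l'H\^opital) lemma to get genuine $C^2$ smoothness --- is the same route the paper intends, so there is nothing to correct.
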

This property will be of particular importance. 
Note that the value $ \phi''(0) $ is not defined since $ \phi $ is not differentiable at zero.
But the value $ s''(0) $ is defined (and is equal to $ -a $).
\begin{equation}
	\label{eq:s2a}
	s''( t ; a ) =
	\begin{cases}
		-a, 	\ \ & t = 0
		\\
		\phi''( t ; a ) & t \neq 0.
	\end{cases}
\end{equation}
Also, although $ \phi'(0) $ is not defined [because $ \phi'(0^+) = 1 $ and $ \phi'(0^-) = -1 $],
the value $ s'(0) $ is defined [$ s'(0) = 0 $].
Many of the properties listed above for $ \phi $ are inherited by $ s $,
such as the symmetry and scaling properties:
\begin{align}
	s(-t ; a) & = s(t ; a)
	\\
	\label{eq:scaling_s}
	s(t; a) & = (b/a) \, s\big(at / b; b\big)
	\\
	\label{eq:ssp1}
	s'(t; a) & = s'\big(at / b; b\big)
	\\
	\label{eq:scaling_sd2}
	s''(t; a) & = (a/b) \, s''\big(at / b; b\big)
\end{align}
The following proposition is proven in Appendix \ref{app:proofpen1}.
\begin{prop}
\label{prop:gfconvex}
Let $ \phi \colon\RR \to \RR $ satisfy the properties listed above. 
Let $ s \colon\RR \to \RR $ be given by Definition \ref{def:s}.
Let $ \lam > 0 $.
If $ 0 \le a \le 1/\lam $,
then the functions
$ g \colon\RR \to \RR $
\begin{equation}
	\label{eq:defgprop}
	g( t ) = 
		\half  t ^2 + \lam s( t ; a)
\end{equation}
and
$ f \colon\RR \to \RR $
\begin{equation}
	f( t ) = 
		\half  t ^2 + \lam \phi( t ; a)
\end{equation}
are convex functions.
\end{prop}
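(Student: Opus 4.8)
The plan is to reduce everything to Proposition~\ref{prop:sfun1d}, which already supplies the essential bound $-a \le s''(t;a) \le 0$, together with the elementary fact that a twice continuously differentiable function on $\RR$ with nonnegative second derivative is convex. First I would treat $g$. By Proposition~\ref{prop:sfun1d}, $s(\,\cdot\,;a)$ is twice continuously differentiable, hence so is $g$, with
\[
	g''(t) = 1 + \lam\, s''(t;a).
\]
Using $s''(t;a) \ge -a$ and the hypothesis $a \le 1/\lam$, this gives $g''(t) \ge 1 - \lam a \ge 0$ for all $t \in \RR$. Therefore $g$ is convex. (The degenerate case $a = 0$ is immediate: then $s \equiv 0$ by P4 and Definition~\ref{def:s}, so $g(t) = \half t^2$.)

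Next I would handle $f$ by a decomposition that avoids the non-differentiability of $\phi$ at the origin. From Definition~\ref{def:s} we have $\phi(t;a) = s(t;a) + \abs{t}$, so
\[
	f(t) = \half t^2 + \lam s(t;a) + \lam \abs{t} = g(t) + \lam \abs{t}.
\]
Since $g$ is convex by the first step and $t \mapsto \lam\abs{t}$ is convex, $f$ is a sum of two convex functions and hence convex.

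I do not anticipate a genuine obstacle here: the real work was carried out in Proposition~\ref{prop:sfun1d} (establishing that $s$ is $C^2$ with the stated second-derivative bounds). The only point that deserves care is precisely that one should \emph{not} try to prove convexity of $f$ from a sign condition on $f''$, because $\phi$—and therefore $f$—fails to be differentiable at $0$; the splitting $f = g + \lam\abs{\,\cdot\,}$ is exactly what sidesteps this, pushing all the non-smoothness into the manifestly convex term $\lam\abs{t}$ while keeping the $C^2$ analysis confined to $g$.
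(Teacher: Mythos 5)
Your proposal is correct and follows essentially the same route as the paper's own proof: bound $g''(t) = 1 + \lam s''(t;a) \ge 1 - \lam a \ge 0$ via Proposition \ref{prop:sfun1d}, then write $f = g + \lam\abs{\,\cdot\,}$ and conclude convexity as a sum of convex functions. The extra remark about the case $a = 0$ is harmless but not needed.
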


\begin{figure}
	\centering
		\begin{tabular}{c@{\hspace{2em}}c}
		\includegraphics{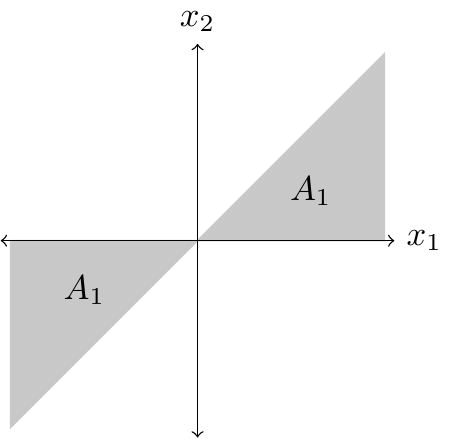}
		&
		\includegraphics{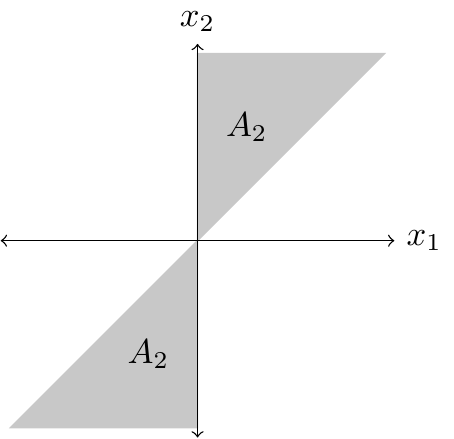}
		\\[2em]
		\includegraphics{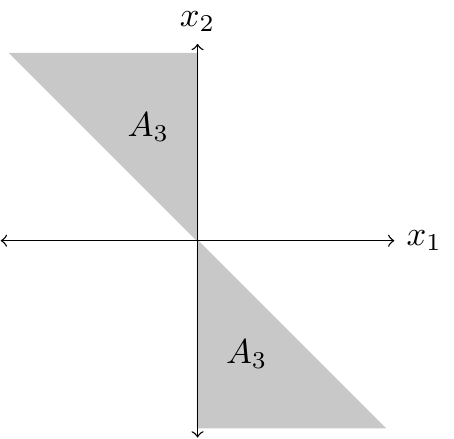}
		&
		\includegraphics{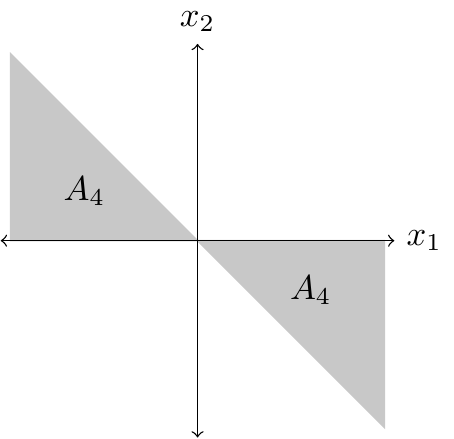}
		\end{tabular}
	\caption{Regions $ \SP_1 $ through $ \SP_4 $ in Definition \ref{def:Sfun}.}
	\label{fig:defA}
\end{figure}

\section{Bivariate Concave Function}
\label{sec:concave}

The bivariate penalty to be given in Sec.~\ref{sec:pen} will be defined in terms of a concave bivariate function $ S $.
The role of $ S $, in describing the bivariate penalty, will be analogous to
the role of $ s $ in describing the univariate penalty $ \phi $.
Accordingly, the properties of $ S $ will be important for the properties of the bivariate penalty. 

\begin{defn}
\label{def:Sfun}
Let $ a = (a_1, a_2) $ with $ a_i \ge 0 $.
Let $ \phi \colon\RR \to \RR $ be a univariate penalty function having the properties listed in Sec.~\ref{sec:pen1}.
Let $ s \colon\RR \to \RR $ be given by Definition \ref{def:s}.
If at least one of $ \{ a_1, a_2 \} $ is non-zero, 
we define the function 
 $ S \colon \RR^2 \to \RR $ as
\[
	S( x ; a ) = 
	\begin{cases}
		s( x_1 + r x_2 ; \alpha ) + (1 - r) \, s( x_2 ; a_1 ) ,  & x \in \SP_1
		\\
		s( r x_1 + x_2 ; \alpha ) + (1 - r) \, s( x_1 ; a_1 ) ,  & x \in \SP_2
		\\
		s( r x_1 + x_2 ; \alpha ) + (1 + r) \, s( x_1 ; a_2 ) ,  & x \in \SP_3
		\\
		s( x_1 + r x_2 ; \alpha ) + (1 + r) \, s( x_2 ; a_2 ) ,  & x \in \SP_4
	\end{cases}
\]
where
\begin{equation}
	\label{eq:ralpha}
	\alpha = \frac{ a_1 + a_2 }{2},
	\qquad
	r = \frac{ a_1 - a_2 }{ a_1 + a_2 }
\end{equation}
and sets $ \SP_i \subset \RR^2 $ are
\begin{align}
	\SP_1 & = \{ x \in \RR^2 \mid x_2 ( x_1 - x_2 ) \ge 0 \}
	\\
	\SP_2 & = \{ x \in \RR^2 \mid x_1 (x_1 - x_2) \le 0 \}
	\\
	\SP_3 & = \{ x \in \RR^2 \mid x_1 (x_1 + x_2) \le 0 \}
	\\
	\SP_4 & = \{ x \in \RR^2 \mid x_2 (x_1 + x_2) \le 0 \}
\end{align}
as shown in Fig.~\ref{fig:defA}.
If both $ a_i = 0 $, we define $ S(x ; 0 ) = 0 $.
\end{defn}

The non-negative parameters $ a_i $ characterize
how strongly concave $ S $ is.
Figure~\ref{fig:Sfun}
illustrates $ S $ for the parameter values $ a_1 = 1.5 $ and $ a_2 = 0.3 $.
The level sets of $ S $ are not ellipses, even though they appear ellipsoidal.

The function $ S $ has three symmetries:
\begin{subequations}
\label{eq:syms}
\begin{align}
	\label{eq:sym1}
	S( x_1, x_2 ; a )
	& = 
	S( x_2, x_1 ; a )
	\\
	\label{eq:sym2}
	& =
	S( -x_1, -x_2 ; a )
	\\
	\label{eq:sym3}
	& =
	S( -x_2, -x_1 ; a )
\end{align}
\end{subequations}
(any two of which imply the remaining one).
Equivalently, 
$ S $ is symmetric with respect to the origin and the two lines $ x_1 = x_2 $ and $  x_1 = -x_2 $.
These symmetries follow directly from Definition \ref{def:Sfun} 
and from the symmetry of the univariate function $ s $.

\begin{figure}[t]
	\centering
		\includegraphics{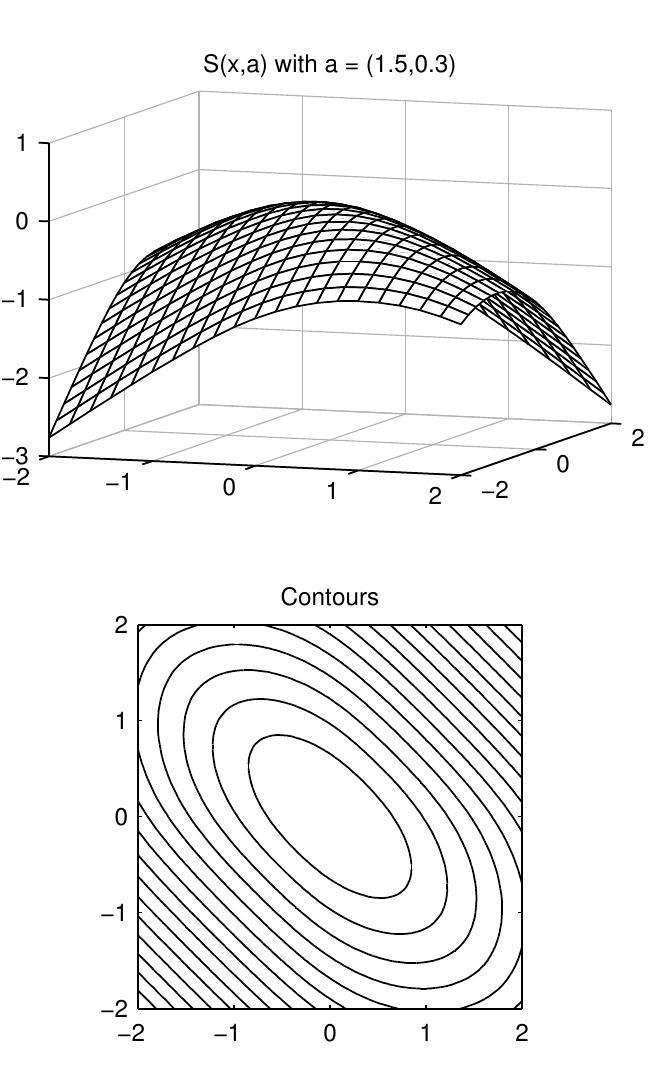}
	\caption{
		Non-separable concave function $ S $ in Definition \ref{def:Sfun}. }  
	\label{fig:Sfun}
\end{figure}

The following lemmas are proven in the Appendix. 
It will be useful in the proofs to note some identities.
First, note that  $ S(0; a) = 0 $.
From the definitions of $ \alpha $ and $ r $ in  \eqref{eq:ralpha}, we have:
\begin{alignat}{2}
	\abs{ r } & \le 1,
	&
	\label{eq:a1id}
	(1 + r) \, \alpha & = a_1
	\\
	\label{eq:adiffid}
	r \alpha & = (a_1 - a_2) / 2,
	&
	\qquad
	(1 - r) \, \alpha & = a_2.
\end{alignat}

\begin{lemma}
\label{lemma:defS}
The bivariate function $ S \colon \RR^2 \to \RR $ 
in Definition \ref{def:Sfun}
is twice continuously differentiable and concave on $ \RR^2 $.
\end{lemma}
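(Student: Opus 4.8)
The plan is to verify the two claimed properties of $S$ — twice continuous differentiability and concavity — piece by piece, using the explicit formula in Definition~\ref{def:Sfun}, and then patching the pieces together across the boundaries of the regions $\SP_1,\dots,\SP_4$. First I would handle the degenerate case: if both $a_i=0$, then $S\equiv 0$ and there is nothing to prove, so assume $\alpha>0$ (equivalently, at least one $a_i\neq 0$), in which case $r$ is well-defined with $\abs r\le 1$ by \eqref{eq:a1id}--\eqref{eq:adiffid}. Next I would observe that on the \emph{interior} of each region $\SP_i$, the function $S(\,\cdot\,;a)$ is a sum of compositions of the univariate function $s(\,\cdot\,;\cdot)$ with affine maps of $(x_1,x_2)$; since $s$ is $C^2$ and concave by Proposition~\ref{prop:sfun1d}, and since the coefficients $1-r=2a_2/(a_1+a_2)\ge 0$ and $1+r=2a_1/(a_1+a_2)\ge 0$ are nonnegative, each branch is $C^2$ and concave on the interior of its region (a nonnegative combination of concave functions precomposed with linear maps is concave). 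The regions $\SP_1,\dots,\SP_4$ cover $\RR^2$ (their union is the whole plane — they are the four ``wedges'' cut out by the lines $x_1=0$, $x_2=0$, $x_1=x_2$, $x_1=-x_2$), and by the three symmetries \eqref{eq:syms} it suffices to do the bookkeeping carefully on, say, $\SP_1$ and its neighbors, transporting the conclusions to the others.

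The substantive work is the matching across the four boundary rays, where two adjacent branches meet: e.g.\ on the ray $x_2=0$ (part of $\partial\SP_1\cap\partial\SP_2$ near the $x_1$-axis), on the ray $x_1=x_2$ ($\partial\SP_1\cap\partial\SP_2$, or $\partial\SP_1$ with itself via the reflection), on $x_1=0$, and on $x_1=-x_2$. Here I would first check continuity of $S$ itself along each ray — using $s(0;\cdot)=0$ and the identities \eqref{eq:a1id}--\eqref{eq:adiffid}, e.g.\ on $x_1=x_2=:t$ the $\SP_1$ branch gives $s\big((1+r)t;\alpha\big)+(1-r)\,s(t;a_1)$ while the $\SP_2$ branch gives $s\big((1+r)t;\alpha\big)+(1-r)\,s(t;a_1)$, which agree; similar cancellations occur on the other rays, often exploiting $(1+r)\alpha=a_1$ together with the scaling property \eqref{eq:scaling_s} to rewrite $s\big((1+r)t;\alpha\big)=(1-r)\,s\big((1+r)t/(1-r);\,\text{adjusted}\big)$-type expressions — I expect the normalization of $\alpha$ and $r$ was chosen precisely to make these match. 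Then I would check that the gradient $\nabla S$ and the Hessian $\nabla^2 S$ agree from both sides along each ray, using the derivative scaling identities \eqref{eq:ssp1} and \eqref{eq:scaling_sd2} and the fact that $s'(0;\cdot)=0$. Establishing $C^1$ and $C^2$ matching along each ray, combined with $C^2$-ness on each open region, gives $S\in C^2(\RR^2)$.

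Finally, for global concavity I would argue that a function which is $C^2$ on all of $\RR^2$ and has $\nabla^2 S(x)\preccurlyeq 0$ wherever $x$ lies in the interior of some $\SP_i$ (which is a dense open full-measure set) must have $\nabla^2 S\preccurlyeq 0$ everywhere by continuity of the Hessian, hence $S$ is concave; alternatively, one can note that concavity on each closed region $\SP_i$ plus $C^1$ gluing across the shared boundaries yields concavity on the union, since any segment in $\RR^2$ crosses finitely many regions and the one-sided directional derivatives match at the crossing points. The main obstacle I anticipate is purely the boundary computations in the second paragraph: there are four rays and one must track which branch applies on which side, correctly substitute $\alpha$, $r$, $a_1$, $a_2$ via \eqref{eq:a1id}--\eqref{eq:adiffid}, and invoke the right scaling identity \eqref{eq:scaling_s}, \eqref{eq:ssp1}, \eqref{eq:scaling_sd2} for each of the value, gradient, and Hessian matches — tedious but mechanical, with no genuine conceptual difficulty once the normalization of $(\alpha,r)$ is seen to be the ``glue.''
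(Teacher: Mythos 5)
Your proposal is correct, and its differentiability half is essentially the paper's proof: the paper likewise verifies, ray by ray, that the values, the first-order partials, and the second-order partials of $S$ agree across the common boundaries of $\SP_1,\dots,\SP_4$, using $s(0;\cdot)=0$, $s'(0;\cdot)=0$, $s''(0;a)=-a$, the identities \eqref{eq:a1id}--\eqref{eq:adiffid}, and the scaling relations \eqref{eq:ssp1}, \eqref{eq:s2da1}--\eqref{eq:s2da2} — exactly the ``glue'' you identify. Where you genuinely diverge is the concavity half. The paper computes $\nabla^2 S$ explicitly on each region and checks negative semidefiniteness via Sylvester's criterion ($\det\nabla^2 S\ge 0$ because it factors as $(1\mp r)\,s''\cdot s''$, together with nonpositive diagonal entries); your route instead observes that each branch is a sum of concave functions precomposed with affine maps with nonnegative weights $1\pm r$, hence concave on its region, and then upgrades to global concavity by continuity of the (already established) $C^2$ Hessian on the dense open complement of the four boundary lines. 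Your argument is shorter and avoids the case-by-case Hessian computation; the paper's computation is not wasted, however, since the explicit per-region Hessian \eqref{eq:ShessSA}--\eqref{eq:ShessS} is reused immediately in the proof of Lemma \ref{lemma:Shessian}, which your structural argument does not supply. Two cosmetic slips in your bookkeeping, neither fatal since you plan to check all four rays anyway: the line $x_2=0$ is the common boundary of $\SP_1$ and $\SP_4$ (not $\SP_1$ and $\SP_2$), and the three symmetries \eqref{eq:syms} map $\{x_1=x_2\}$ and $\{x_1=-x_2\}$ each to themselves, so the check on $x_1=-x_2$ is not literally ``transported'' from the $\SP_1$ boundaries — it must be done separately (it is entirely analogous, as the paper also notes).
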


\begin{lemma}
\label{lemma:Shessian}
Let $ a = (a_1, a_2) $ with $ a_i \ge 0 $.
The Hessian of the bivariate function $ S $ in Definition \ref{def:Sfun}
satisfies
\begin{equation}
	\label{eq:ellipse}
	- \half
	\begin{bmatrix}
		a_1 + a_2 &  a_1 - a_2
		\\
		a_1 - a_2 & a_1 + a_2
	\end{bmatrix}
	\mle
	\nabla^2 S(x ; a )
	\mle 
	0,
	\quad
	\text{for all} \ 
	x \in \RR^2.
\end{equation}
Equivalently,
\begin{equation}
	\label{eq:ellipseB}
	-K(a)
	\mle 
	\nabla^2 S(x ; a )
	\mle 
	0,
	\quad
	\text{for all} \ 
	x \in \RR^2
\end{equation}
where $ K(a) $ is defined by its eigenvalue decomposition
\begin{equation}
	K(a)
	:= 
	Q\tr
	\begin{bmatrix}
		a_1 & 0
		\\
		0 & a_2
	\end{bmatrix}
	Q	
	=
	\half
	\begin{bmatrix}
		a_1 + a_2 &  a_1 - a_2
		\\
		a_1 - a_2 & a_1 + a_2
	\end{bmatrix}
\end{equation}
where $ Q $ is the orthonormal matrix
defined in \eqref{eq:defQG}.
Furthermore, the lower bound is attained at $ x = 0 $, i.e.,
\begin{equation}
	\label{eq:SKZ}
	\nabla^2 S( 0 ; a ) = -K(a).
\end{equation}
\end{lemma}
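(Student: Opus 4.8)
The plan is to compute $\nabla^2 S$ branch by branch on the interiors of the regions $\SP_i$ and then invoke Lemma~\ref{lemma:defS} --- which already guarantees $S\in C^2$, so that $\nabla^2 S$ is continuous --- to pass from the dense union of these interiors to all of $\RR^2$. If $a_1=a_2=0$ then $S\equiv 0$ and $K(a)=0$ and there is nothing to prove, so assume $\alpha=(a_1+a_2)/2>0$; then $r$ is well defined and $\abs{r}\le 1$ by \eqref{eq:a1id}. On each region the branch of $S$ has the form $s(v\tr x;\alpha)+c\,s(x_j;a_k)$, where $v=(1,r)$, $j=2$ on $\SP_1,\SP_4$ and $v=(r,1)$, $j=1$ on $\SP_2,\SP_3$, while $c=1-r$, $k=1$ on $\SP_1,\SP_2$ and $c=1+r$, $k=2$ on $\SP_3,\SP_4$ --- in all cases $c\ge 0$. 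Writing $e_1=(1,0)$, $e_2=(0,1)$, the chain rule gives
\begin{equation}
	\label{eq:plan-hess}
	\nabla^2 S(x;a) = s''(v\tr x;\alpha)\, v v\tr + c\, s''(x_j;a_k)\, e_j e_j\tr .
\end{equation}
Since $vv\tr\mge 0$ and $e_je_j\tr\mge 0$ and, by Proposition~\ref{prop:sfun1d}, $s''(\,\cdot\,;\alpha)\le 0$, $s''(\,\cdot\,;a_k)\le 0$, $c\ge 0$, both terms in \eqref{eq:plan-hess} are negative semidefinite, which yields the upper bound $\nabla^2 S\mle 0$.

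For the lower bound I would rewrite $K(a)$ to match \eqref{eq:plan-hess}. Using $a_1+a_2=2\alpha$ and $a_1-a_2=2r\alpha$ one has $K(a)=\alpha\begin{bmatrix}1&r\\r&1\end{bmatrix}$, and for the vector $v$ attached to a given region this decomposes as
\begin{equation}
	\label{eq:plan-Kdecomp}
	K(a) = \alpha\, v v\tr + \alpha(1-r^2)\, e_j e_j\tr ,
\end{equation}
with the \emph{same} pairing of $v$ with $j$ as in that region's branch of $S$. Moreover, by \eqref{eq:a1id}--\eqref{eq:adiffid}, $\alpha(1-r^2)=\alpha(1-r)(1+r)$ equals $(1-r)a_1$ on $\SP_1,\SP_2$ and $(1+r)a_2$ on $\SP_3,\SP_4$ --- i.e.\ exactly the prefactor $c\,a_k$ of the axis term in \eqref{eq:plan-hess}. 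Adding \eqref{eq:plan-hess} and \eqref{eq:plan-Kdecomp} therefore gives
\begin{equation}
	\label{eq:plan-sum}
	K(a) + \nabla^2 S(x;a) = \bigl(\alpha + s''(v\tr x;\alpha)\bigr)\, v v\tr + c\,\bigl(a_k + s''(x_j;a_k)\bigr)\, e_j e_j\tr ,
\end{equation}
and Proposition~\ref{prop:sfun1d}, which supplies $s''(\,\cdot\,;\alpha)\ge -\alpha$ and $s''(\,\cdot\,;a_k)\ge -a_k$, makes both bracketed scalars nonnegative and hence the right-hand side positive semidefinite. This is \eqref{eq:ellipseB}, equivalently \eqref{eq:ellipse}.

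Finally, the four regions reduce to two: $\SP_2$ arises from $\SP_1$ and $\SP_4$ from $\SP_3$ by swapping $x_1\leftrightarrow x_2$, an operation under which $K(a)$ and the bounds in \eqref{eq:ellipse} are invariant (and under which $S$ is unchanged by \eqref{eq:sym1}), so it suffices to check \eqref{eq:plan-hess}--\eqref{eq:plan-sum} on $\SP_1$ and $\SP_3$. For \eqref{eq:SKZ}, note $0\in\SP_i$ for every $i$, and at $x=0$ both arguments of $s''$ in \eqref{eq:plan-hess} vanish, so \eqref{eq:s2a} gives $s''(0;\alpha)=-\alpha$ and $s''(0;a_k)=-a_k$; then \eqref{eq:plan-sum} collapses to $K(a)+\nabla^2 S(0;a)=0$. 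The whole argument is essentially bookkeeping; the one step that requires genuine care is the verification, through the identities \eqref{eq:a1id}--\eqref{eq:adiffid}, that the diagonal remainder $\alpha(1-r^2)$ left over after extracting $\alpha vv\tr$ from $K(a)$ matches the coefficient $c\,a_k$ of the axis-aligned rank-one term in each branch --- this is precisely what makes \eqref{eq:plan-sum} come out as a sum of positive semidefinite matrices.
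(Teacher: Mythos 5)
Your proposal is correct and takes essentially the same route as the paper: the same per-region expression of $\nabla^2 S$ as a sum of two scaled rank-one positive semidefinite matrices, the upper bound from $s''\le 0$, and the lower bound by comparing with $K(a)=\alpha\left[\begin{smallmatrix}1 & r\\ r & 1\end{smallmatrix}\right]$ and checking the result is a nonnegative combination of the same rank-one matrices. The only cosmetic difference is that the paper rescales the axis-aligned term to the common parameter $\alpha$ via the scaling identities \eqref{eq:s2da1} and \eqref{eq:s2da2} and then bounds $s''(\,\cdot\,;\alpha)\ge-\alpha$ once, whereas you keep $s''(x_j;a_k)$ and bound it by $-a_k$ directly, matching the leftover coefficient $\alpha(1-r^2)$ with $c\,a_k$ through \eqref{eq:a1id}--\eqref{eq:adiffid}; the two bookkeepings are equivalent.
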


Lemma \ref{lemma:Shessian} states that 
$ S $ is maximally concave at the origin.
The lemma also gives the Hessian at the origin in terms of the parameters $ a_i $.
Lemma \ref{lemma:Shessian} is a key result for the subsequent results. 

Lemma \ref{lemma:Shessian} can be illustrated in terms of ellipses.
If $ M $ is a positive semidefinite matrix, then the set $ \ellip[M] = \{ x  : x\tr M\inv x \le 1 \} $ is an ellipsoid \cite{Boyd_convex}.
In addition, $ M_1 \mle M_2 $ if and only if $ \ellip[M_1] \subseteq \ellip[M_2] $.
In Fig.~\ref{fig:ellipses}, 
we set $ a_1 = 1.5 $ and $ a_2 = 0.3 $.
The ellipses corresponding to $ K(a) $ and $ -\nabla^2 S( x ; a ) $
are shown in gray and black, respectively. 
For each $ x $, the black ellipse is contained within the gray ellipse, illustrating \eqref{eq:ellipseB}.
For large $ x $,
the black ellipse shrinks,
indicating that the function $ S $ becomes less concave away from the origin. 
The shrinkage behavior of the ellipse for large $ x_i $ is different in different quadrants:
it shrinks faster in quadrants 1 and 3 than in quadrants 2 and 4.
At $ x = 0 $, the black and gray ellipses coincide,
reflecting the fact that the Hessian of $ S $ is equal to $ -K(a) $ at the origin \eqref{eq:SKZ}.

\begin{figure}[t]
	\centering
		\includegraphics{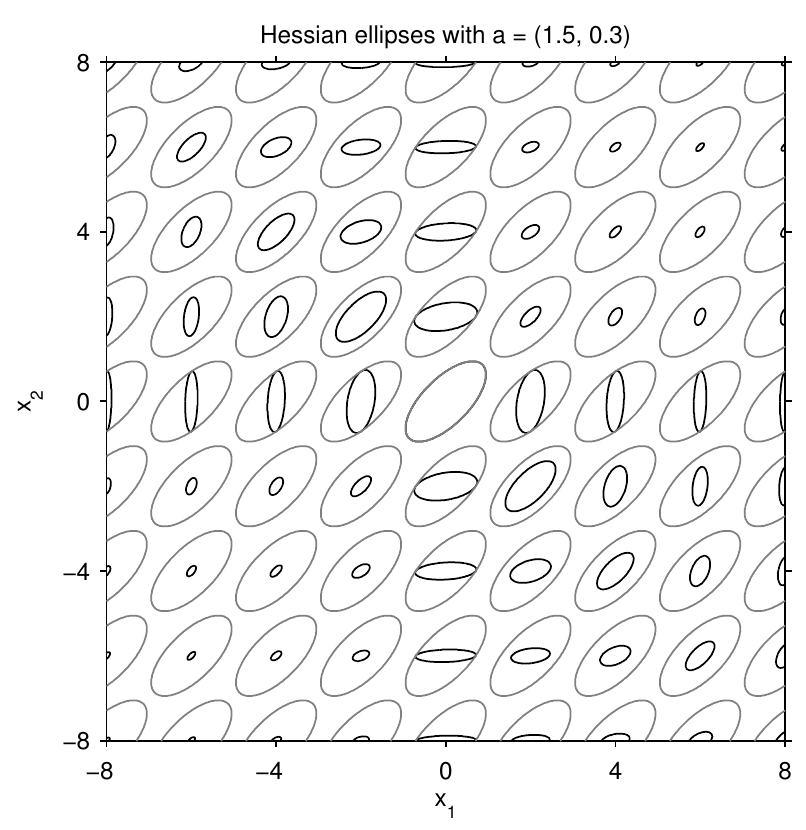}
	\caption{
		Illustration of Lemma \ref{lemma:Shessian}.
		The black ellipses are inside the gray ellipses,
		indicating that
		$ 0.5 x\tr K x + S(x; a )$ is convex.
	}
	\label{fig:ellipses}
\end{figure}

\begin{theorem}
\label{thm:cvx2}
Let $ a = (a_1, a_2) $ with $ a_i \ge 0 $.
Let $ S \colon \RR^2 \to \RR $ be the function
in Definition \ref{def:Sfun}.
Let
$	K(\gamma) = Q \, \Gamma \, Q\tr $
with eigenvalues $ \gamma_i \ge 0 $
be the positive semidefinite matrix
defined in \eqref{eq:defK}.
The function $ g \colon \RR^2 \to \RR $,
\begin{equation}
	\label{eq:defg}
	g(x; a, \gamma) = \half x\tr K(\gamma) \, x + \lam \, S(x; a),
	\ 
	\lam > 0,
\end{equation}
is convex
if
\begin{equation}
	\label{eq:aicond}
	0 \le a_1 \le \gamma_1 / \lam,
	\qquad
	0 \le a_2 \le \gamma_2 / \lam.
\end{equation}
\end{theorem}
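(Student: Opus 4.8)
The plan is to use the standard second-order characterization of convexity. Since $S$ is twice continuously differentiable on $\RR^2$ (Lemma \ref{lemma:defS}) and $x \mapsto \half x\tr K(\gamma)\, x$ is a quadratic, the function $g(\, \cdot\, ; a, \gamma)$ is $C^2$ with Hessian
$\nabla^2 g(x; a, \gamma) = K(\gamma) + \lam\, \nabla^2 S(x; a)$
at every $x$. It therefore suffices to show that this matrix is positive semidefinite for all $x \in \RR^2$.

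First I would invoke Lemma \ref{lemma:Shessian}, which gives $\nabla^2 S(x; a) \mge -K(a)$ for every $x \in \RR^2$, where $K(a) = Q\tr \diag(a_1, a_2)\, Q$. Multiplying by $\lam > 0$ and adding the constant matrix $K(\gamma)$ preserves the Löwner order, so $\nabla^2 g(x; a, \gamma) \mge K(\gamma) - \lam K(a)$ for all $x$. The problem is thereby reduced to checking that the single constant matrix $K(\gamma) - \lam K(a)$ is positive semidefinite.

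Next I would diagonalize that matrix. From \eqref{eq:defK} we have $K(\gamma) = Q\, \Gamma\, Q\tr$, and from the definition of $K(a)$ in Lemma \ref{lemma:Shessian} we have $K(a) = Q\, \diag(a_1, a_2)\, Q\tr$, where $Q$ is the symmetric orthonormal matrix of \eqref{eq:defQG}. Hence $K(\gamma)$ and $K(a)$ are simultaneously diagonalized, and $K(\gamma) - \lam K(a) = Q\, \diag(\gamma_1 - \lam a_1,\ \gamma_2 - \lam a_2)\, Q\tr$. Under hypothesis \eqref{eq:aicond} each eigenvalue $\gamma_i - \lam a_i$ is nonnegative, so $K(\gamma) - \lam K(a) \mge 0$; consequently $\nabla^2 g(x; a, \gamma) \mge 0$ for all $x$, and $g$ is convex. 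The degenerate case $a_1 = a_2 = 0$ is handled separately and trivially, since then $S \equiv 0$ and $g$ is the convex quadratic $\half x\tr K(\gamma)\, x$.

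I do not expect a genuine obstacle here: essentially all the work has been front-loaded into Lemma \ref{lemma:Shessian} (the uniform Hessian bound on $S$), and what remains is routine eigenvalue bookkeeping. The only point worth stating carefully is that $K(\gamma)$ and $K(a)$ share the eigenvector matrix $Q$, which makes the difference $K(\gamma) - \lam K(a)$ diagonal in the $Q$-basis and its eigenvalues simply $\gamma_i - \lam a_i$; everything else follows immediately. This theorem is the bivariate counterpart of Proposition \ref{prop:gfconvex}, and the proof structure mirrors it.
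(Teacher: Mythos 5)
Your proposal is correct and follows essentially the same route as the paper's proof: bound $\nabla^2 g(x;a,\gamma) = K(\gamma) + \lam\,\nabla^2 S(x;a)$ from below using Lemma \ref{lemma:Shessian}, then observe that $K(\gamma) - \lam K(a)$ is diagonalized by $Q$ with eigenvalues $\gamma_i - \lam a_i \ge 0$ under \eqref{eq:aicond}. The only (harmless) additions are your explicit remark that $Q$ simultaneously diagonalizes both matrices and the separate treatment of the trivial case $a_1 = a_2 = 0$.
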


\begin{proof}
Since $ g $ is twice continuously differentiable, it is sufficient to show
the Hessian of $ g $ is positive semidefinite. 
The Hessian of $ g $ is given by
\begin{align}
	\label{eq:deffH}
	\nabla^2 g(x; a , \gamma )
	& = K(\gamma) + \lam [ \nabla^2 S(x; a) ]
	\\
	& = Q\tr \Gamma \, Q + \lam [ \nabla^2 S(x; a) ].
\end{align}
From Lemma \ref{lemma:Shessian} it follows that
\begin{equation}
	\nabla^2 g(x; a, \gamma )
	\mge
	Q\tr
		\begin{bmatrix}
		\gamma_1 - \lam a_1 & 0
		\\
		0 & \gamma_2 - \lam a_2
	\end{bmatrix}
	Q.
\end{equation}
Hence, 
$ g(x; a, \gamma ) $ is convex if $ \gamma_i - \lam a_i \ge 0 $ for $ i = 1, 2 $.
This proves the result.
\end{proof}

Theorem \ref{thm:cvx2} is illustrated in Fig.~\ref{fig:convexity}.
In this example, we set
$ \gamma_1 = 1.5 $, $ \gamma_2 = 0.3 $, and $ \lambda = 15.0 $.
Hence, the critical parameters are $ a_1^{*} = \gamma_1 / \lam = 0.1 $ and $ a_2^{*} = \gamma_2 / \lam = 0.02 $.
If either $ a_1 $ or $ a_2 $ is greater than the 
respective critical value, then the function $ g $ in \eqref{eq:defg} will be non-convex. 
In Fig.~\ref{fig:convexity}(a),
we set
$ a_i = 0.9 \gamma_i / \lam < a_i^{*} $
to satisfy condition \eqref{eq:aicond},
hence  $ g $ is convex.
In contrast, 
in Fig.~\ref{fig:convexity}(b),
we set
$ a_i = 1.1 \gamma_i / \lam > a_i^{*} $
violating condition \eqref{eq:aicond},
hence 
$ g $ is non-convex.
The lack of convexity can be recognized in both the surface and contour plots.

\begin{figure}[t]
	\centering
	\begin{tabular}{c@{\hspace{4em}}c}
	\includegraphics[]{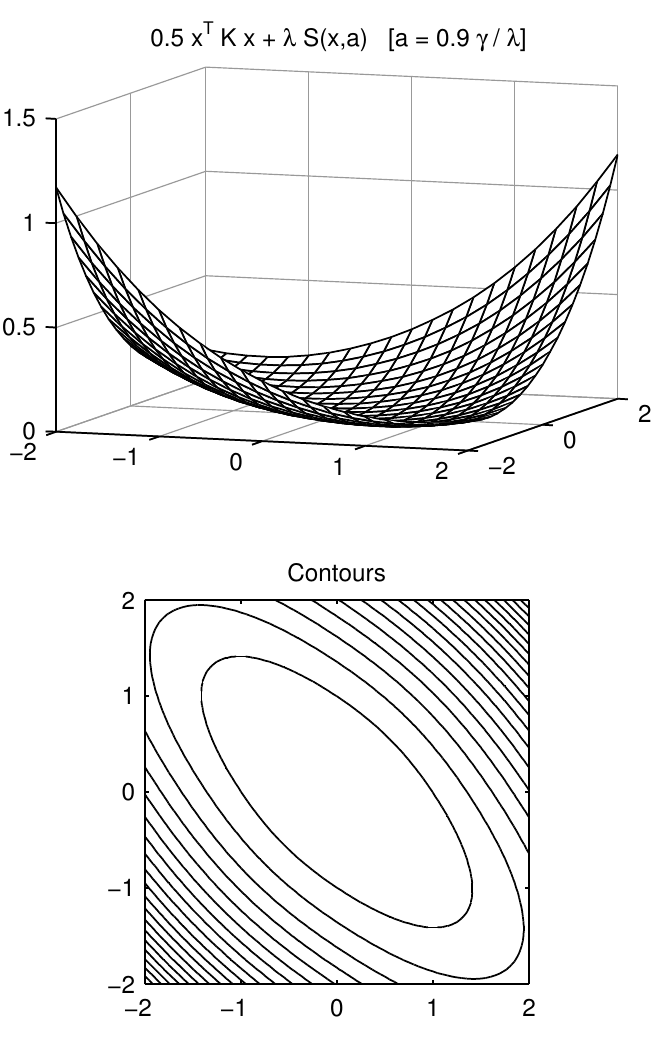}
	&
	\includegraphics[]{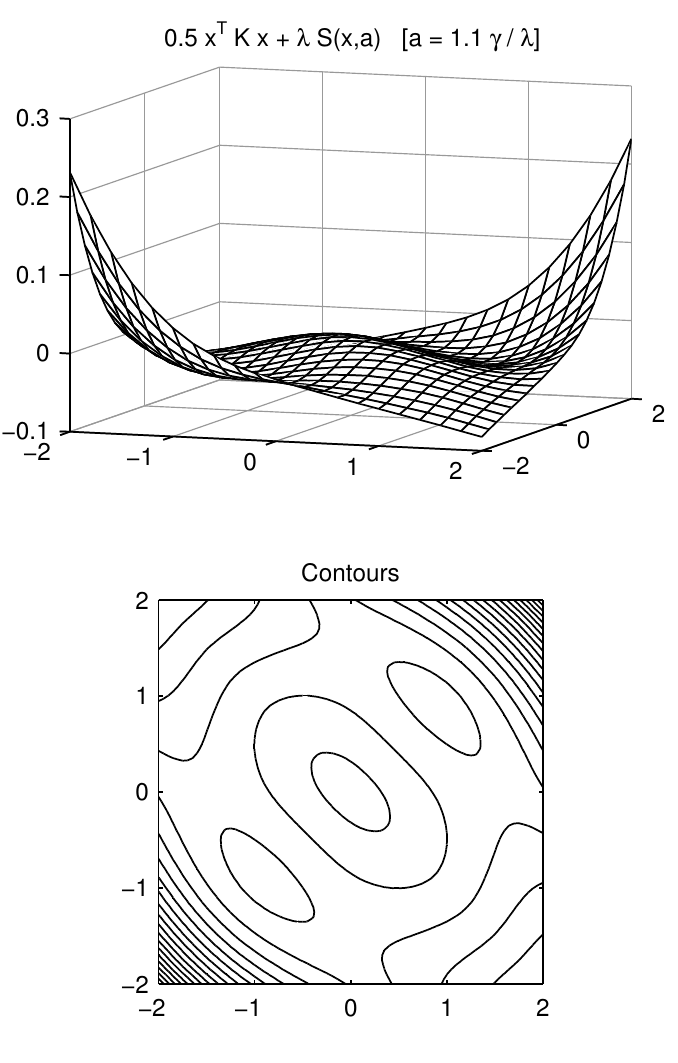}
	\\
	(a)
	&
	(b)
	\end{tabular}
	\caption{
		Illustration of convexity condition \eqref{eq:aicond}.
		(a)
		Function $ g $ is convex as $ a_i $ satisfy \eqref{eq:aicond}.
		(b)
		Function $ g $ is not convex as $ a_i $ violate \eqref{eq:aicond}.
	}
	\label{fig:convexity}
\end{figure}

\section{Bivariate Penalties}
\label{sec:pen}

In this section, we define a non-convex non-separable bivariate penalty. 
Our intention is to strongly induce sparsity in 
solutions of problem \eqref{eq:biprob} while maintaining the convexity of the problem.
The penalty is parameterized by two non-negative parameters $ a_1 $ and $ a_2 $,
which we restrict so as to ensure convexity of the objective function. 

\begin{defn}
\label{def:bp}
Let $ a = (a_1, a_2) $ with $ a_i \ge 0 $.
Let $ \phi \colon\RR \to \RR $ be a univariate penalty function having the properties listed in Sec.~\ref{sec:pen1}.
Let $ S \colon\RR^2 \to \RR $ be 
the corresponding function in Definition \ref{def:Sfun}.
We define the bivariate penalty function 
 $ \psi \colon \RR^2 \to \RR $ as
\begin{equation}
	\label{eq:defpsi}
	\psi( x ; a ) 
	= S( x ; a ) + \norm{ x }_1.
\end{equation}
\end{defn}

If $ a_1 \neq a_2 $, then the penalty $ \psi $ is non-separable.
Figure~\ref{fig:pen2}
illustrates $ \psi $ for the parameter values $ a_1 = 1.5 $ and $ a_2 = 0.3 $.
The degree of non-convexity differs in different quadrants. 
Note in Fig.~\ref{fig:pen2} that
the contours of $ \psi $ resemble those of the separable non-convex penalty in Fig.~\ref{fig:pensep}, 
but the curvature is more pronounced in quadrants 2 and 4
and less pronounced in quadrants 1 and 3.
The parameters $ a_1 $ and $ a_2 $ determine the precise behavior of the penalty.

\begin{figure}[t]
	\centering
		\includegraphics{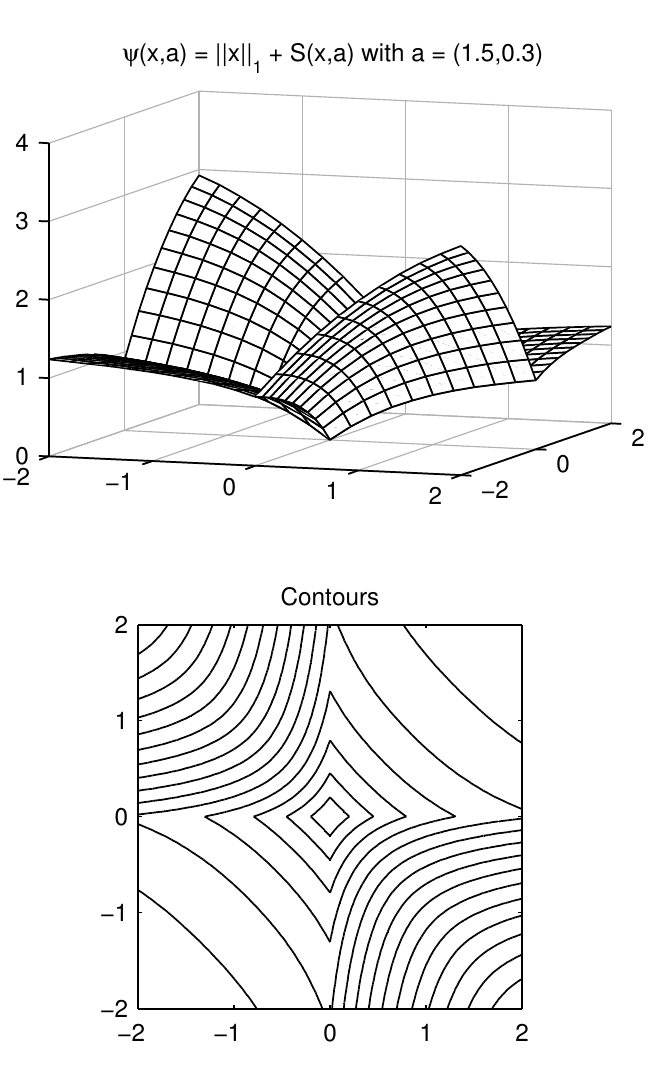}
	\caption{
		Non-separable non-convex penalty $ \psi $ in Definition \ref{def:bp}.
	}
	\label{fig:pen2}
\end{figure}

It is informative to consider special cases of the bivariate penalty.
If $ a_1 > 0 $ and $ a_2 = 0 $, 
then $ \psi $ simplifies to
\begin{equation}
	\label{eq:pensing}
	\psi(x) = \abs{x_1} + \abs{x_2} + \phi( x_1 + x_2 ; a_1/2) - \abs{ x_1 + x_2 }.
\end{equation}
If $ a_1 = a_2 $, then the penalty reduces to a separable function,
$ \psi(x) = \phi(x_1; a_1) + \phi(x_2; a_1) $
(see Fig.~\ref{fig:pensep}(b)).
If $ a_1 = a_2 = 0 $,
then it further reduces to the \la\ norm,
i.e., 
$ \psi(x) = \abs{x_1} + \abs{x_2} $
(see Fig.~\ref{fig:pensep}(a)).
In any case, 
if either $ a_1 $ or $ a_2 $ is positive, then $ \psi $ is non-convex. 

\medskip

The following theorem, based on Theorem \ref{thm:cvx2}, 
states how to restrict the parameters $ a_i $ 
to ensure problem \eqref{eq:biprob} is convex. 

\begin{theorem}
\label{thm:cvx3}
Let $ \psi \colon \RR^2 \to \RR $ be the bivariate penalty in Definition \ref{def:bp}.
Suppose $ H\tr H =  Q\tr \Gamma Q $
where $ Q $ and $ \Gamma $ are given by \eqref{eq:defQG}.
If $ a = (a_1, a_2) $ 
satisfy 
$	0 \le a_i \le \gamma_i / \lam $,
then the bivariate objective function $ f := \RR^2 \to \RR $, 
\begin{equation}
	\label{eq:biofun}
	f(x ; a ) = \half \norm{ y - H x }_2^2 + \lam \, \psi(x; a),
\end{equation}
is convex.

\end{theorem}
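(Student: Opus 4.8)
The plan is to show that $f(\,\cdot\,;a)$ is a sum of convex functions, the only non-routine ingredient being Theorem~\ref{thm:cvx2}. First I would expand the data-fidelity term, writing $\half \norm{y - Hx}_2^2 = \half x\tr (H\tr H) x - y\tr H x + \half \norm{y}_2^2$. By hypothesis $H\tr H = Q\tr \Gamma Q$, and since $Q$ in \eqref{eq:defQG} is symmetric and orthogonal ($Q = Q\tr = Q\inv$), this equals $Q\,\Gamma\,Q\tr = K(\gamma)$ in the notation of \eqref{eq:defK}, with eigenvalues $\gamma_i \ge 0$. Substituting the definition $\psi(x;a) = S(x;a) + \norm{x}_1$ from \eqref{eq:defpsi} then gives
\[
	f(x;a) = \Bigl[ \half x\tr K(\gamma)\, x + \lam\, S(x;a) \Bigr] + \lam \norm{x}_1 - y\tr H x + \half \norm{y}_2^2 .
\]

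Next I would identify the bracketed term as exactly $g(x;a,\gamma)$ from \eqref{eq:defg} in Theorem~\ref{thm:cvx2}. Under the stated hypothesis $0 \le a_i \le \gamma_i/\lam$, condition \eqref{eq:aicond} holds, so Theorem~\ref{thm:cvx2} yields that $g(\,\cdot\,;a,\gamma)$ is convex on $\RR^2$. The remaining pieces are manifestly convex: $\lam \norm{x}_1$ is a nonnegative multiple of a norm, $-y\tr H x$ is linear, and $\half\norm{y}_2^2$ is constant. A finite sum of convex functions is convex, hence $f(\,\cdot\,;a)$ is convex, proving the theorem.

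There is essentially no obstacle remaining at this stage: the substantive work has already been done in Lemma~\ref{lemma:Shessian} (which bounds $\nabla^2 S(x;a)$ below by $-K(a)$ uniformly in $x$) and in Theorem~\ref{thm:cvx2} (which balances that curvature deficit against the quadratic term). The one point worth making explicit in the write-up is why the non-smoothness of $\psi$ is harmless: the non-differentiable part of $\psi$ is precisely the convex term $\norm{x}_1$, which is split off and handled separately, while the twice-differentiable, non-convex part $S$ is absorbed into the smooth convex function $g$. I would also note in passing that the argument does not require $H$ to be nonsingular — if some $\gamma_i = 0$ the corresponding $a_i$ must be $0$, and the construction of $S$ in Definition~\ref{def:Sfun} degenerates gracefully, so convexity still follows.
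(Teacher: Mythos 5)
Your proposal is correct and follows the same route as the paper's proof: split off $\half x\tr (H\tr H)\, x + \lam S(x;a)$ as the function $g$ of Theorem \ref{thm:cvx2}, invoke that theorem under the hypothesis $0 \le a_i \le \gamma_i/\lam$, and observe that the remaining pieces ($\lam\norm{x}_1$, the linear term, and the constant) are convex, so $f$ is a sum of convex functions. The extra remarks on the non-smooth part and the singular case are sound but not needed beyond what the paper states.
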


\begin{proof}
We write
\begin{equation}
	f(x ; a ) = g(x ; a ) +  \half y\tr y - y\tr H x + \lam \, \norm{ x }_1
\end{equation}
where $ g $ is given by \eqref{eq:defg} with $ K = H\tr H $ therein.
From Theorem \ref{thm:cvx2},  $ g $ is convex. 
Hence, $ f $ is convex 
because it is the sum of convex functions.
\end{proof}

Theorem \ref{thm:cvx3} gives a range for parameters $ a_1 $ and $ a_2 $
to ensure the objective function $ f $ is convex.
Precisely, the parameters should be bounded, respectively, by the eigenvalues of $ (1/\lam) H\tr H $.
To maximally induce sparsity, the parameters should be set to the maximal (critical) values, 
$ a_i = \gamma_i/\lam $.

Note that, even when the matrix $ H $ is singular (i.e., $ \gamma_1 = 0 $ or $ \gamma_2 = 0 $),
the bivariate penalty can be non-convex without spoiling the convexity
of the objective function $ f $
(if at least one of $ \gamma_i $ is positive). 
In other words, 
we need \emph{not} sacrifice the convexity of the objective function $ f $ 
in order to use sparsity-inducing non-convex penalties,
even when $ H $ is \emph{singular}.
This is an impossibility when the penalty $ \psi $ is a separable function.

Other bivariate penalties can be defined that ensure the objective function is convex;
however, the one defined here 
satisfies a further property we think should be required of a bivariate penalty. 
Namely, the proposed bivariate penalty lies between 
the two separable penalties corresponding to the 
minimum and maximum parameters $ a_i $.

\begin{theorem}
\label{thm:bounds}
Let $ a = (a_1, a_2) $ with $ a_i \ge 0 $.
Set
$ a_{\min} = \min\{ a_1, a_ 2 \} $
and
$ a_{\max} = \max\{ a_1, a_ 2 \} $.
The bivariate penalty $ \psi $ in Definition \ref{def:bp} satisfies
\begin{equation}
	\label{eq:bounds}
	\phi(x_1; a_{\max}) + \phi(x_2; a_{\max}) 
	\le \psi(x; a) \le 
	\phi(x_1; a_{\min}) + \phi(x_2; a_{\min}).
\end{equation}
\end{theorem}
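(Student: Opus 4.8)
The plan is to prove the two inequalities in \eqref{eq:bounds} separately, exploiting the monotonicity of $\phi$ in its parameter $a$ (property P9) together with the explicit piecewise formula for $S$ in Definition~\ref{def:Sfun}. Since $\psi(x;a) = S(x;a) + \norm{x}_1$ and $\phi(x_1;a) + \phi(x_2;a) = s(x_1;a) + s(x_2;a) + \norm{x}_1$, both inequalities reduce to sandwiching $S(x;a)$ between $s(x_1;a_{\max}) + s(x_2;a_{\max})$ and $s(x_1;a_{\min}) + s(x_2;a_{\min})$. Because all three symmetries \eqref{eq:syms} are shared by $S$ and by the separable functions $s(x_1;\cdot)+s(x_2;\cdot)$, it suffices to verify the bounds on a single fundamental region, e.g.\ $\{x_2(x_1-x_2)\ge 0\}$ appropriately intersected with a half-plane so that we land in $\SP_1$; the other pieces follow by relabeling. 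Without loss of generality assume $a_1 \ge a_2$, so $a_{\max}=a_1$, $a_{\min}=a_2$, $r\ge 0$, $\alpha=(a_1+a_2)/2$, and the identities \eqref{eq:a1id}--\eqref{eq:adiffid} give $(1+r)\alpha = a_1$, $(1-r)\alpha = a_2$.

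On $\SP_1$ we have $S(x;a) = s(x_1 + r x_2;\alpha) + (1-r)\,s(x_2;a_1)$. For the lower bound I would use the scaling property \eqref{eq:scaling_s} to rewrite $s(x_1+rx_2;\alpha)$ and compare it with $s(x_1;a_1) + r\,s(x_2;a_1)$-type expressions, then invoke concavity of $s$ and the sub-additivity/super-additivity consequences of concavity with $s(0;a)=0$ (namely $s(u+v;a) \le s(u;a) + s(v;a)$ when $s$ is concave with $s(0)=0$, and the reverse on a ray). For the upper bound, the monotonicity $a\mapsto s(t;a)$ decreasing (from P9, since $\phi$ is decreasing in $a$ and $\abs t$ is independent of $a$) lets me replace $a_1$ by $a_2$ and $\alpha$ by $a_2$ at the cost of increasing $s$; what remains is a purely separable comparison $s(x_1+rx_2;a_2) + (1-r)s(x_2;a_2) \le s(x_1;a_2) + s(x_2;a_2)$, again a concavity estimate. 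I expect each of these to come down to an application of Jensen/concavity after writing the arguments as convex combinations; the coefficients $r$ and $1\pm r\in[0,2]$ must be tracked carefully, which is where \eqref{eq:a1id}--\eqref{eq:adiffid} earn their keep.

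The main obstacle will be the bookkeeping across the four regions and making sure the single-region argument genuinely transports via the symmetries: a point and its images under the three reflections may lie in different $\SP_i$, and one must check that the separable bounding functions are invariant under exactly those reflections (they are, since $\phi$ is even) so the reduction is legitimate. A secondary subtlety is the case $a_1=a_2$ (then $r=0$ and $S$ is separable, so \eqref{eq:bounds} holds with equality on both sides) and the degenerate case $a_1=a_2=0$ (where $S\equiv 0$); these should be dispatched first as sanity checks. I would also verify the inequalities hold on the boundaries between regions using the $C^2$ gluing guaranteed by Lemma~\ref{lemma:defS}, so no separate boundary analysis is needed. Once the $\SP_1$ estimate is in hand, the theorem follows by combining it with the identity $\psi = S + \norm{x}_1$ and $\phi(t;a) = s(t;a) + \abs t$.
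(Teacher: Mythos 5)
Your overall frame is the paper's: reduce \eqref{eq:bounds} to $s(x_1;a_{\max})+s(x_2;a_{\max})\le S(x;a)\le s(x_1;a_{\min})+s(x_2;a_{\min})$, assume WLOG $a_1\ge a_2$, and on $\SP_1$ prove the lower bound by the scaling identity \eqref{eq:stalpha} plus Jensen and the upper bound by concavity/ray estimates together with monotonicity in the parameter; that part of your sketch is sound and matches the paper's $\SP_1$ argument. The genuine gap is the claimed reduction to the single region $\SP_1$ by the symmetries \eqref{eq:syms}. The group generated by $(x_1,x_2)\mapsto(x_2,x_1)$ and $(x_1,x_2)\mapsto(-x_1,-x_2)$ maps $\SP_1$ only onto $\SP_1\cup\SP_2$ (it preserves the sign of $x_1x_2$), so the half-plane pair $\SP_3\cup\SP_4$, i.e.\ the points with $x_1x_2\le 0$, is never reached: for instance $(2,-1)$ and its three images $(-1,2)$, $(-2,1)$, $(1,-2)$ all lie outside $\SP_1\cup\SP_2$. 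Symmetry legitimately reduces $\SP_2$ to $\SP_1$ and $\SP_4$ to $\SP_3$ (which is exactly how the paper uses it), but $\SP_3$ must be handled by its own argument; your proposal never supplies one.

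Moreover, $\SP_3$ is not a relabeling of $\SP_1$, and the specific mechanism you sketch for the upper bound breaks there. On $\SP_3$ one has $S(x;a)=s(rx_1+x_2;\alpha)+(1+r)\,s(x_1;a_2)$ with $x_1$ and $x_2$ of opposite signs and the weight $1+r>1$ attached to the \emph{small} parameter $a_2=a_{\min}$. If you first replace every parameter by $a_{\min}$ (using monotonicity in $a$) you would then need $s(rx_1+x_2;a_2)+r\,s(x_1;a_2)\le s(x_2;a_2)$, and this is false in general: taking $x_1=-\eps$, $x_2=2\eps$ with small $\eps>0$ and using $s(t;a_2)=-a_2t^2/2+O(\abs{t}^3)$, the deficit is $-\tfrac{a_2}{2}r\eps^2\bigl(3-r\bigr)+O(\eps^3)<0$ whenever $r>0$. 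The paper avoids this by keeping the parameter $\alpha$ and invoking the scaling identity \eqref{eq:sta2}, which converts $s(rx_1+x_2;\alpha)$ into $(1-r)\,s\bigl(\tfrac{rx_1+x_2}{1-r};a_2\bigr)$ (the factor $1-r<1$ is essential), together with the evenness $s(x_1;a_2)=s(-x_1;a_2)$ before applying Jensen; a similar sign-sensitive argument is needed for the $\SP_3$ lower bound. So to complete your proof you must add a separate $\SP_3$ analysis along these lines; the rest (equality cases $a_1=a_2$, boundary consistency) is fine and indeed not needed once each closed region is treated.
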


The theorem is proven in Appendix \ref{sec:bounds}.
(The supplemental material has an animated illustration of these bounds.)

\begin{figure}[t]
	\centering
	\includegraphics[]{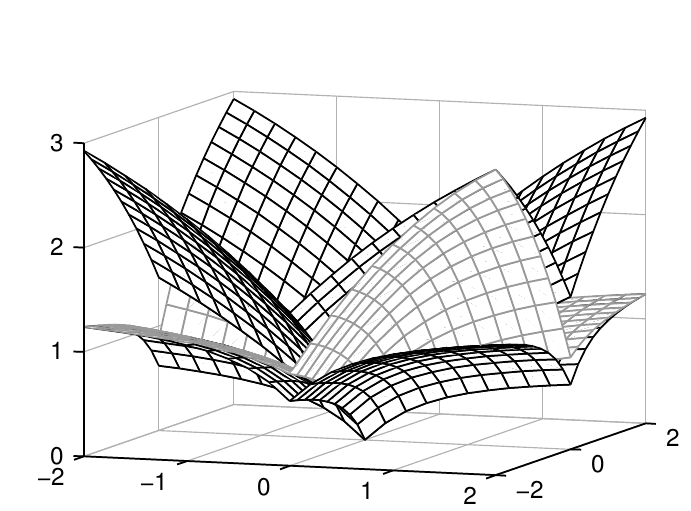}
	\caption{
		Illustration of Theorem \ref{thm:bounds}.
		The non-separable penalty $ \psi $ lies between two separable penalties.
	}
	\label{fig:sandwich}
\end{figure}

The inequality in Theorem \ref{thm:bounds} is tight: 
the lower and upper bounds are individually satisfied with equality on the lines $ \{ x = (t, t) \} $
and $ \{ x = (t, -t) \} $,
as illustrated in Fig.~\ref{fig:sandwich}.
Note that when $ a_1 = a_2 $ (i.e., $ a_{\min} = a_{\max} $), 
the theorem requires the penalty $ \psi $ to be separable. 
Indeed, the penalty \eqref{eq:defpsi} is separable when $ a_1 = a_2 $.

We think a bivariate penalty should satisfy the inequality of Theorem \ref{thm:bounds} for the following reason. 
In this work, we aim to induce pure sparsity (i.e., not structured sparsity, etc.).
Therefore, when $ H\tr H $ is a diagonal matrix
we should use a separable penalty. 
(A separable penalty best reflects an iid prior.)
It follows that when $ H\tr H = \gamma_1 I $,
the most suitable penalty (maintaining convexity of $ f $)
is the separable one:
$ \phi(x_1, \gamma_1/\lam) + \phi(x_2, \gamma_1/\lam) $.
A parameterized bivariate penalty should recover this
separable penalty as a special case. 
Moreover, 
if $ H\tr H = \gamma_2 I $ with $ \gamma_2 < \gamma_1 $, then
the most suitable penalty is again a separable one:
$ \phi(x_1, \gamma_2/\lam) + \phi(x_2, \gamma_2/\lam) $.
But $ \gamma_2 < \gamma_1 $
means the corresponding data fidelity term is less strongly convex
and thus the penalty term must be less strongly non-convex.
Consequently, we must have
\[
	 \phi(x_1, \gamma_1/\lam) + \phi(x_2, \gamma_1/\lam) 
	<
	 \phi(x_1, \gamma_2/\lam) + \phi(x_2, \gamma_2/\lam).
\]
When $ H\tr H $ 
has distinct eigenvalues $ \gamma_1 $ and $ \gamma_2 $,
the most suitable bivariate penalty should lie between these two separable penalties. 
Theorem \ref{thm:bounds} assures this.

If not suitably designed and utilized, it is conceivable that a non-separable penalty may
lead to correlation or structure in the estimated signal that is not present in original sparse signal.
To avoid unintentionally inducing correlation in the estimated signal, 
it seems reasonable that 
the bivariate penalty should
exhibit some similarity to the corresponding separable penalties
(that reflect unstructured sparsity).
Theorem \ref{thm:bounds} indicates the bivariate penalty \eqref{eq:defpsi} 
conforms to the relevant separable penalties.
It is still possible that some erroneous correlation might be introduced,
but such correlation is not evident in the experimental results.
We attribute this to Theorem \ref{thm:bounds}.

One may question the legitimacy of a method wherein penalty parameters
are set according to the data fidelity term.
Conventionally, the penalty term should reflect prior knowledge 
of the signal to be estimated;
it should not depend on $ H $, which represents the observation model.
(This is formalized in the Bayesian perspective 
where the objective function corresponds to a likelihood function
and the penalty term corresponds to a prior). 
The approach taken here, wherein the parameters
of the penalty term are based on properties of $ H $,
appears to violate this principle. 
However, 
the common practice of 
restricting the penalty to be convex also violates this principle. 
Probability densities 
(e.g.,
generalized Gaussian \cite{Mallat_1998},
mixture models \cite{Portilla_2003_TIP, Chipman_1997},
Bessel-K \cite{Fadili_2005_TIP}, and $\alpha$-stable \cite{Achim_2005_SPL}),
that accurately model sparsity,
 correspond to non-convex penalties. 
Using the \la\ norm as a penalty corresponds to the Laplace distribution (a relatively weak sparsity model).
The proposed bivariate sparse regularization (BISR) approach
is simply intended to follow a sparsity prior more closely.

\subsection{Separable penalties}

To clarify the value of non-separable regularization, 
we note a limitation of separable penalties. 

\begin{lemma}
\label{lemma:pensep}
Let the univariate penalty $ \phi \colon\RR \to \RR $ satisfy the properties of Sec.~\ref{sec:pen1}. 
The objective function $ f := \RR^2 \to \RR $, 
\begin{equation}
	f(x ; a ) = \half \norm{ y - H x }_2^2
		+ \lam \, \phi(x_1; a)
		+ \lam \, \phi(x_2; a),
\end{equation}
with $ \lam > 0 $ and $ a \ge 0 $, 
is convex only if 
\begin{equation}
	\phi''(0^+) \ge -(1/\lam) \min\{ \gamma_1, \gamma_2 \},
\end{equation}
or equivalently,
$	0 \le a \le \min \{ \gamma_1, \gamma_2 \} / \lam $,
where $ \gamma_i $ are the eigenvalues of $ H\tr H $.
\end{lemma}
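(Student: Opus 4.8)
The plan is to prove the stated necessity directly: assuming $ f(\,\cdot\,;a) $ is convex on $ \RR^2 $, I will deduce $ \lam a \le \min\{\gamma_1,\gamma_2\} $. The only tool needed is the elementary fact that the restriction of a convex function to a line is convex, so I only have to test $ f $ along two well-chosen lines — the eigendirections of $ H\tr H $, namely $ v_1 = (1,1) $ and $ v_2 = (1,-1) $ (recall from \eqref{eq:defK}--\eqref{eq:defQG} that $ H\tr H = K(\gamma) $ is built from exactly these two vectors).

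First I would restrict $ f $ to the line $ x = t v_i $, $ t \in \RR $. Expanding $ \half\norm{y - Hx}_2^2 = \half y\tr y - t\, y\tr H v_i + \half t^2\, v_i\tr H\tr H v_i $, and using $ \phi(t;a)+\phi(t;a) = 2\phi(t;a) $ (for $ v_1 $) and $ \phi(t;a)+\phi(-t;a) = 2\phi(t;a) $ via the symmetry P5 (for $ v_2 $) for the penalty term, one obtains
\[
	f(t v_i ; a) = c_i + \ell_i\, t + \tfrac12 t^2\, (v_i\tr H\tr H\, v_i) + 2\lam\, \phi(t;a)
\]
for suitable constants $ c_i,\ell_i $. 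A direct evaluation with \eqref{eq:defK} gives $ v_1\tr H\tr H\, v_1 = 2\gamma_1 $ and $ v_2\tr H\tr H\, v_2 = 2\gamma_2 $, so $ f(t v_i;a) = c_i + \ell_i t + \gamma_i t^2 + 2\lam\phi(t;a) $.

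Next I would restrict attention to $ t > 0 $, where $ x = t v_i $ has both coordinates nonzero, so by P2 the map $ t \mapsto f(t v_i;a) $ is twice continuously differentiable on $ (0,\infty) $ with second derivative $ 2\gamma_i + 2\lam\,\phi''(t;a) $. Since $ t\mapsto f(t v_i;a) $ is the composition of the convex function $ f $ with an affine map, it is convex on $ \RR $, hence on $ (0,\infty) $, so this second derivative is $ \ge 0 $ for all $ t > 0 $. Letting $ t\to 0^+ $ and using continuity of $ \phi'' $ on $ \RR_+ $ together with P7, $ \phi''(0^+;a) = -a $, yields $ 2\gamma_i - 2\lam a \ge 0 $, i.e.\ $ \gamma_i \ge \lam a $ for $ i = 1,2 $. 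Therefore $ \lam a \le \min\{\gamma_1,\gamma_2\} $; combined with the standing hypothesis $ a \ge 0 $ this is exactly $ 0 \le a \le \min\{\gamma_1,\gamma_2\}/\lam $, and rewriting $ a = -\phi''(0^+) $ via P7 gives the equivalent form $ \phi''(0^+) \ge -(1/\lam)\min\{\gamma_1,\gamma_2\} $.

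I do not anticipate a genuine obstacle here; the single point requiring care is to stay away from the kink of $ \phi $ at the origin, which is why I test $ f $ on the open half-lines $ t > 0 $ and pass to the limit $ t\to 0^+ $ by continuity rather than attempting to differentiate at $ t = 0 $. (Equivalently, one could evaluate the full Hessian $ \nabla^2 f(x;a) = H\tr H + \lam\,\diag(\phi''(x_1;a),\phi''(x_2;a)) $ at $ x = (t,t) $ with $ t > 0 $, observe it equals $ H\tr H + \lam\,\phi''(t;a)\, I \to Q\tr(\Gamma - \lam a\, I) Q $ as $ t\to 0^+ $, and require positive semidefiniteness — the same computation in a different guise.)
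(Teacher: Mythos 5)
Your proof is correct and follows essentially the same route as the paper: restrict $f$ to a line along an eigendirection of $H\tr H$, observe the restriction must be convex, and let $t \to 0^+$ in the second derivative using property P7 ($\phi''(0^+;a) = -a$) to force $\gamma_i - \lam a \ge 0$. The only cosmetic difference is that you test both eigendirections $(1,1)$ and $(1,-1)$ explicitly, whereas the paper works with a generic eigenvector for the minimum eigenvalue $\gamma_{\min}$, which yields the same bound $a \le \min\{\gamma_1,\gamma_2\}/\lam$.
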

\begin{proof}
Let $ u \in \RR^2 $ be an eigenvector of $ H\tr H $ corresponding to 
its minimum eigenvalue $ \gamma_{\min} $, i.e.,
\begin{equation}
	H\tr H u = \gamma_{\min} u.
\end{equation}
Consider $ f $ on a line in the direction of $ u $. 
Namely, 
define $ g \colon \RR \to \RR $ as
\begin{align}
	g(t) & = f( t u; a )
	\\
	& =
	\half \norm{ y - t H u }_2^2
		+ \lam \, \phi( t u_1 ;  a)
		+ \lam \, \phi( t u_2 ; a).
\end{align}
We will show that $ g $ is not convex when $ a > \gamma_{\min} $.
It will follow that $ f $ is not convex, 
because the restriction of a multivariate convex function to any line must also be convex. 

By the properties of $ \phi $, 
the function $ g $ is twice continuously differentiable on $ \RR_+ $
and its second derivative is given by
\begin{align}
	g''(t) 
	& =
	u H\tr H u 
	+ \lam u_1^2 \, \phi''( t u_1 ;  a)
	+ \lam u_2^2 \, \phi''( t u_2 ; a)
	\\
	& =
	\gamma_{\min} u\tr u
	+ \lam u_1^2 \, \phi''( t u_1 ;  a)
	+ \lam u_2^2 \, \phi''( t u_2 ; a).
\end{align}
Since $ \phi''( 0^+ ; a ) = -a $ is a defining property of $ \phi $,
we have
\begin{align}
	g''(0^+) & =
	\gamma_{\min} u\tr u
	- \lam u_1^2 \, a
	- \lam u_2^2 \, a
	\\
	& = 
	(\gamma_{\min} - \lam a) \, u\tr u.
\end{align}
Hence, convexity of $ g $ requires  $ \gamma_{\min} - \lam a \ge 0 $;
i.e., $ a \le \gamma_{\min} / \lam $.
\end{proof}

According to Lemma \ref{lemma:pensep}, a \emph{separable} non-convex penalty,
that ensures convexity of the objective function, is limited by the minimum eigenvalue of $ H\tr H $.
It cannot exploit the greater eigenvalue. 
This is unfavorable, because when one of the eigenvalues is close to zero, 
a separable penalty can be only mildly non-convex
and provides negligible improvement relative to the \la\ norm;
when $ H\tr H $ is singular,
we recover the \la\ norm. 
In contrast, 
a non-separable penalty can exploit both eigenvalues independently. 
Hence, 
non-separable penalties are most advantageous when the eigenvalues of $ H\tr H $ are quite different in value.

\section{Sparse Reconstruction}
\label{sec:recon}

Practical problems in signal processing involve far more than two variables.
Therefore, the proposed bivariate penalty \eqref{eq:defpsi} and convexity condition \eqref{eq:aicond}
are of little practical use on their own. 
In this section we show how they can be used to solve an $ N $-point 
linear inverse problem (with $ N > 2 $). 
We consider the problem of estimating a signal $ x \in \RR^N $ given $ y $,
\begin{equation}
	\label{eq:obsmod}
	y = H x + w
\end{equation}
where
$ H $ is a known linear operator,
$ x $ is known to be sparse, and
$ w $ is additive white Gaussian noise (AWGN).
We formulate the estimation of $ x $ as an optimization problem
with bivariate sparse regularization (BISR),
\begin{equation}
	\label{eq:defFdeconv}
	\hat x = \arg \min_{x \in \RR^N } \;
	\biggl\{
		F(x) =
		\half \norm{ y - H x }_2^2 + \frac{\lam}{2} \sum_n \psi( (x_{n-1}, x_n) ; a)
	\biggr\},
	\quad
	\lam > 0
\end{equation}
where 
$ a = (a_1, a_2) $
and $ \psi \colon \RR^2 \to \RR $ is the proposed bivariate penalty \eqref{eq:defpsi}.
In the penalty term, 
the first and last signal values pairs, $ ( x_0 , x_1 ) $ and $ ( x_{N}, x_{N+1} ) $, straddle
the end-points of $ x $.
As noted in Sec.~\ref{sec:notation}, 
we define $ x_n = 0 $ for $ n \notin \{ 1 , 2 , \dots, N \} $, 
which simplifies subsequent notation.

If $ a_1 = a_2 $,
then the bivariate penalty is separable,
i.e., 
$ \psi(u; a) = \phi(u_1; a_1) + \phi(u_2; a_1) $,
and the $ N $-point penalty term in \eqref{eq:defFdeconv}
reduces to $ \lam \sum_n \phi(x_n, a_1) $.
Hence,
we recover the standard (separable) formulation of sparse regularization.
In particular, 
if $ a_1 = a_2 = 0 $,
then
$ \psi( u ; 0 ) = \abs{ u_1 } + \abs{ u_2 } $
and 
the $ N $-point penalty term reduces to $ \lam \norm{ x }_1 $,
i.e., 
the classical sparsity-inducing convex penalty.

In order to induce sparsity more effectively,
we allow $ \psi $ to be non-separable;  i.e., $ a_1 \neq a_2 $.
To that end, the following section addresses the problem of how to set $ a_1 $ and $ a_2 $ 
in the bivariate penalty $ \psi $ to ensure convexity of the $ N $-variate objective function $ F $ in \eqref{eq:defFdeconv}.

\subsection{Convexity condition}

\begin{lemma}
\label{lemma:tridiag}
Let $ F \colon \RR^N \to \RR $ be defined in \eqref{eq:defFdeconv}
where
$ \psi \colon \RR^2 \to \RR $ is a parameterized bivariate penalty as defined in Definition \ref{def:bp}.
Let $ P $ be a positive semidefinite symmetric tridiagonal Toeplitz matrix, 
\begin{equation}
	\label{eq:defP}
	P = 
	\begin{bmatrix}
		p_0 & p_1 & & & \\
		p_1 & p_0 & p_1 & & \\
		& \ddots & \ddots & \ddots  \\
		& & p_1 & p_0 & p_1 \\
		& & & p_1 & p_0
	\end{bmatrix},
\end{equation}
such that
$ 0 \mle P \mle H\tr H $.
If the bivariate function $ f \colon \RR^2 \to \RR $ 
defined as
\begin{equation}
	\label{eq:defp}
	f(u) = \half u\tr
	\begin{bmatrix}
		p_0 & 2 p_1 \\
		2 p_1 & p_0
	\end{bmatrix}
	u + \lam \psi( u; a )
\end{equation}
is convex,
then $ F $ is convex.
\end{lemma}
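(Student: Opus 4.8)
The plan is to show that the Hessian of $F$ is positive semidefinite at every point by splitting $F$ into the sum of a piece built from $P$ and a remainder built from $H\tr H - P$, and then exploiting the tridiagonal structure of $P$ to decompose the $P$-piece plus the penalty term into a sum of decoupled bivariate functions, each of which is exactly the function $f$ in \eqref{eq:defp} (up to the linear and constant terms that do not affect convexity).

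First I would write $\half\norm{y-Hx}_2^2 = \half x\tr H\tr H x - y\tr H x + \half y\tr y$ and then use $0 \mle P \mle H\tr H$ to write $\half x\tr H\tr H x = \half x\tr P x + \half x\tr (H\tr H - P) x$. The second quadratic form is convex since $H\tr H - P \mge 0$, and the linear and constant terms are convex, so it suffices to prove that $x \mapsto \half x\tr P x + \frac{\lam}{2}\sum_n \psi\big((x_{n-1},x_n);a\big)$ is convex. Next I would expand $\half x\tr P x$ using the tridiagonal Toeplitz form \eqref{eq:defP}: this equals $\frac{p_0}{2}\sum_n x_n^2 + p_1 \sum_n x_{n-1} x_n$ (with the boundary convention $x_n = 0$ outside $\{1,\dots,N\}$). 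The key observation is the pairing: $\sum_n x_n^2 = \half \sum_n (x_{n-1}^2 + x_n^2)$ when each index is counted across the two consecutive pairs it belongs to, so that $\half x\tr P x = \half \sum_n \big[\tfrac{p_0}{2}(x_{n-1}^2+x_n^2) + 2 p_1 x_{n-1} x_n\big] = \half \sum_n \Big[\half (x_{n-1},x_n)\begin{bmatrix} p_0 & 2p_1 \\ 2p_1 & p_0 \end{bmatrix}(x_{n-1},x_n)\tr\Big]$. I would check this bookkeeping carefully, including the boundary pairs, which is where the $x_n=0$ convention earns its keep.

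With that rewriting, the objective of interest becomes $\frac12 \sum_n \Big[ \half (x_{n-1},x_n) M (x_{n-1},x_n)\tr + \lam\, \psi\big((x_{n-1},x_n);a\big)\Big]$ where $M = \begin{bmatrix} p_0 & 2p_1 \\ 2p_1 & p_0 \end{bmatrix}$, i.e. $\frac12 \sum_n f\big((x_{n-1},x_n)\big)$ with $f$ exactly as in \eqref{eq:defp}. Each summand is a convex function of $x$ because it is the composition of the convex bivariate function $f$ (convex by hypothesis) with the linear map $x \mapsto (x_{n-1},x_n)$; a sum of convex functions is convex, and scaling by $\frac12 > 0$ preserves convexity. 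Adding back the convex remainder $\half x\tr(H\tr H - P)x - y\tr H x + \half y\tr y$ shows $F$ is convex.

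The main obstacle I anticipate is purely the combinatorial/boundary bookkeeping in the step that re-expresses $\half x\tr P x$ as $\frac12\sum_n f_{\mathrm{quad}}((x_{n-1},x_n))$: one must verify that summing the quadratic form $M$ over all consecutive pairs $(x_{n-1},x_n)$ — including the straddling pairs $(x_0,x_1)$ and $(x_N,x_{N+1})$ — reproduces the diagonal $p_0\sum x_n^2$ with the correct coefficient (each interior $x_n^2$ appears in two pairs, each contributing $\tfrac{p_0}{2}$) and the off-diagonal $2p_1\sum x_{n-1}x_n$ (each product appears in exactly one pair). The requirement that $P$ be Toeplitz (constant $p_0, p_1$) and that $\psi$ be applied on precisely the same set of consecutive pairs is what makes this decomposition possible; once it is in hand, the rest is a one-line convexity argument. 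Everything else — the splitting via $0 \mle P \mle H\tr H$ and the composition-with-linear-map argument — is routine.
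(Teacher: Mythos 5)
Your proposal is correct and follows essentially the same route as the paper's proof: split off the convex remainder via $0 \mle P \mle H\tr H$, then rewrite $\half x\tr P x$ plus the penalty sum as $\half\sum_n f\bigl((x_{n-1},x_n)\bigr)$ and invoke convexity of $f$ composed with linear maps. The only cosmetic difference is that you do the pairing by explicit index bookkeeping (with the boundary convention $x_n=0$ handling the straddling pairs), whereas the paper expresses the same decomposition by writing $P$ as a sum of $2\times 2$ block matrices.
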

The lemma is proven in Appendix \ref{sec:lemma:tridiag}. 
According to the lemma,
it is sufficient to restrict $ \psi $ so as to ensure convexity of the bivariate function $ f $ in \eqref{eq:defp}. 
Therefore, 
the allowed penalty parameters $ a_i $
can be determined from the tridiagonal matrix $ P $.
Using Theorem \ref{thm:cvx2} and Lemma \ref{lemma:tridiag}, we obtain Theorem \ref{thm:hrzo}.

\begin{theorem}
\label{thm:hrzo}
Let $ F \colon \RR^N \to \RR $ be defined in \eqref{eq:defFdeconv}
where
$ \psi \colon \RR^2 \to \RR $ is a parameterized bivariate penalty as defined in Definition \ref{def:bp}.
Let $ P $ be a symmetric tridiagonal Toeplitz matrix \eqref{eq:defP}
satisfying
$ 0 \mle P \mle H\tr H $.
If
\begin{equation}
	\label{eq:aipconds}
	0 \le a_1 \le (p_0 + 2 p_1)/\lam,
	\quad
	0 \le a_2 \le (p_0 - 2 p_1)/\lam,
\end{equation}
then $ F $ in \eqref{eq:defFdeconv} is convex.
\end{theorem}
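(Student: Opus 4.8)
The plan is to chain together the two results already in hand: Lemma~\ref{lemma:tridiag}, which reduces convexity of the $N$-variate $F$ to convexity of a single bivariate function, and Theorem~\ref{thm:cvx2}, which supplies the convexity condition for exactly that type of bivariate function.

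First I would apply Lemma~\ref{lemma:tridiag}. Since $P$ is a symmetric tridiagonal Toeplitz matrix with $0 \mle P \mle H\tr H$, the lemma tells us it is enough to show that the bivariate function
\[
	f(u) = \half u\tr \begin{bmatrix} p_0 & 2 p_1 \\ 2 p_1 & p_0 \end{bmatrix} u + \lam\,\psi(u;a)
\]
of \eqref{eq:defp} is convex. The next step is the key identification: the $2\times 2$ matrix in $f$ is precisely $K(\gamma)$ of \eqref{eq:defK} with $\gamma_1 = p_0 + 2 p_1$ and $\gamma_2 = p_0 - 2 p_1$, as one checks by matching $\half(\gamma_1 + \gamma_2) = p_0$ and $\half(\gamma_1 - \gamma_2) = 2 p_1$. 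These $\gamma_i$ are nonnegative --- this is forced by hypothesis \eqref{eq:aipconds} itself, since $0 \le a_i \le \gamma_i/\lam$ can hold only when $\gamma_i \ge 0$ (and when $\gamma_i = 0$ it pins $a_i = 0$). Writing $\psi(u;a) = S(u;a) + \norm{u}_1$ by Definition~\ref{def:bp}, we have $f(u) = g(u; a, \gamma) + \lam \norm{u}_1$ with $g$ exactly the function in \eqref{eq:defg}. By Theorem~\ref{thm:cvx2}, $g$ is convex whenever $0 \le a_1 \le \gamma_1/\lam = (p_0 + 2 p_1)/\lam$ and $0 \le a_2 \le \gamma_2/\lam = (p_0 - 2 p_1)/\lam$, which is exactly \eqref{eq:aipconds}; and adding the convex term $\lam \norm{u}_1$ leaves $f$ convex. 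Lemma~\ref{lemma:tridiag} then delivers convexity of $F$, finishing the proof.

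I do not expect a genuine obstacle at this stage: the real work was already done in Lemma~\ref{lemma:tridiag} (assembling the bivariate penalties into the $N$-variate penalty term, proved in Appendix~\ref{sec:lemma:tridiag}) and in Theorem~\ref{thm:cvx2}. The only point that needs care is the pairing between parameters and eigenvalues: $a_1$ must be matched with $\gamma_1 = p_0 + 2 p_1$ and $a_2$ with $\gamma_2 = p_0 - 2 p_1$, these being the eigenvalues associated with the eigenvector directions $\tfrac{1}{\sqrt 2}(1,1)$ and $\tfrac{1}{\sqrt 2}(1,-1)$ of $Q$. This matches the convention of Lemma~\ref{lemma:Shessian}, where the Hessian bound $-K(a) \mle \nabla^2 S \mle 0$ is written through the same matrix $Q$ with eigenvalues ordered $a_1, a_2$, so the inequalities combine directionwise without a sign mismatch.
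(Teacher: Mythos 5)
Your proposal is correct and follows essentially the same route as the paper: diagonalize the $2\times 2$ block as $Q\tr\,\mathrm{diag}(p_0+2p_1,\,p_0-2p_1)\,Q$, invoke Theorem~\ref{thm:cvx2} for the bivariate function in \eqref{eq:defp}, and conclude via Lemma~\ref{lemma:tridiag}. Your extra remarks (splitting $\psi = S + \norm{\cdot}_1$ so Theorem~\ref{thm:cvx2} applies to $g$ with the convex $\ell_1$ term added, and noting that \eqref{eq:aipconds} forces $p_0 \pm 2p_1 \ge 0$) only make explicit steps the paper leaves implicit.
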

\begin{proof}
Note the eigenvalue value decomposition 
\begin{equation}
	\begin{bmatrix}
		p_0 & 2 p_1 \\
		2 p_1 & p_0
	\end{bmatrix}
	=
	Q\tr
	\begin{bmatrix}
		p_0 + 2 p_1 & 0
		\\
		0 & p_0 - 2 p_1
	\end{bmatrix}
	Q	
\end{equation}
where $ Q $ is the orthonormal matrix \eqref{eq:defQG}.
Hence, $ f $ in \eqref{eq:defp} can be written as
\begin{equation}
	f(u) = 
	\frac{1}{2}
	u\tr 
	Q\tr
	\begin{bmatrix}
		p_0 + 2 p_1 & 0
		\\
		0 & p_0 - 2 p_1
	\end{bmatrix}
	Q
	u
	+
	\lam \,
	\psi( u;  a ).
\end{equation}
By Theorem \ref{thm:cvx2},
if  $ 0 \le a_1 \le (p_0 + 2 p_1)/\lam $
and $ 0 \le a_2 \le (p_0 - 2 p_1)/\lam $,
then $ f $ is convex.
It follows from Lemma \ref{lemma:tridiag} that $ F $ in \eqref{eq:defFdeconv} is convex.
\end{proof}

\subsection{Optimality condition}

In this section, we derive an explicit condition to 
verify the optimality of a prospective minimizer of
the objective function $ F $ in \eqref{eq:defFdeconv}.
The optimality condition is also useful for monitoring
the convergence of an optimization algorithm
(see the animation in the supplemental material). 

The general condition to characterize minimizers of a convex
function is expressed in terms of the subdifferential.
If $ F $ is convex, then $ x\opt \in \RR^N $ is a minimizer if and only if
$	0 \in \partial F( x \opt )  $
where $ \partial F $ is the subdifferential of $ F $. 

We seek an expression for the subdifferential of the objective function $ F $.
The function $ F $ in \eqref{eq:defFdeconv}
has a regularization term that is non-differentiable, non-convex, and non-separable. 
But with the aid of \eqref{eq:defpsi},
we may write the regularization term as:
\begin{align}
	\half \sum_{n} \psi( ( x_{n-1}  ,  x_{n} ) ; a ) 
	& = \half \sum_{n} \Bigl[ S( ( x_{n-1}  ,  x_{n} ) ; a )
			+ \norm{ ( x_{n-1}  ,  x_{n} ) }_1 \Bigr]
	\\
	& = \half \sum_{n} \Bigl[ S( ( x_{n-1}  ,  x_{n} ) ; a )
			+ \abs{ x_{n-1} } + \abs{ x_{n} } \Bigr]
	\\
	\label{eq:RS}
	& = \norm{ x }_1 
			+ \half \sum_{n} S( ( x_{n-1}  ,  x_{n} ) ; a )
\end{align}
where $ x_n = 0 $ for $ n \notin \{ 1 , 2 , \dots, N \} $.
We define
$ \SN \colon \RR^N \to \RR $ 
as
\begin{equation}
	\label{eq:defSN}
	\SN( x ; a ) = \half \sum_{n} S( ( x_{n-1}  ,  x_{n} ) ; a ) .
\end{equation}
The function $ \SN $ is differentiable, it being the sum of differentiable functions. 
Using \eqref{eq:RS}, we may express the objective function $ F $ in \eqref{eq:defFdeconv} as
\begin{equation}
	\label{eq:Fsimp}
	F(x) =
	\half \norm{ y - H x }_2^2
	+
	\lam \SN( x ; a )
	+
	\lam \norm{ x }_1.
\end{equation}
The benefit of \eqref{eq:Fsimp} 
compared to \eqref{eq:defFdeconv}
is that the regularization term (which is  non-differentiable, non-convex, and non-separable) 
is separated into 
a differentiable part and a convex separable part. 
The $ \SN $ term is differentiable and its gradient is easily evaluated.
The \la\ norm is separable and convex and its subdifferential is easily evaluated.

The gradient of $ \SN $ is given by
\begin{equation}
	\label{eq:SNgrad}
	[ \nabla \SN( x ; a ) ]_n
	=
	\half
	S_1( (x_{n}, x_{n+1}) ; a)
	+
	\half
	S_2( (x_{n-1}, x_n) ; a)
\end{equation}
where $ S_i $ is the partial derivative of $ S((x_1, x_2)) $ with respect to $ x_i $.
They are tabulated in \eqref{eq:defS1} and \eqref{eq:defS2}.

The subdifferential of the \la\ norm is separable \cite{Boyd_convex},
\begin{equation}
	\partial \norm{ x }_1 = \sign( x_1 ) \times \cdots \times \sign ( x_N )
\end{equation}
where $ \sign $ is the set-valued signum function
\begin{equation}
	\sign( t ) :=
	\begin{cases}	
		\{ 1 \}, & t > 0
		\\
		[-1, 1], \ & t = 0
		\\
		\{ -1 \}, & t < 0.
	\end{cases}
\end{equation}

Since the first two terms of \eqref{eq:Fsimp} are differentiable, the subdifferential of $ F $ is
\begin{equation}
	\partial F(x) = 
	H\tr ( H x - y )
	+
	\lam  \nabla \SN( x ; a )
	+
	\lam \partial \norm{ x }_1.
\end{equation}
Hence the condition $ 0 \in \partial F(x\opt) $
can be expressed as
\begin{equation}
	( 1 / \lam )
	H\tr ( y - H x\opt  )
	-
	\nabla \SN( x\opt ; a )
	\in
	\partial \norm{ x\opt }_1.
\end{equation}
Expressing this condition component-wise,
we have the following result.

\begin{theorem}
\label{thm:optim}
If $ a = (a_1, a_2) $
is chosen so that 
the objective function $ F $ in \eqref{eq:defFdeconv}
is convex, 
then $ x\opt $ minimizes $ F $
if and only if
\begin{equation}
	\label{eq:optcond}
	\frac{ 1 }{ \lam }
	[ H\tr ( y - H x\opt  ) ]_n
	-
	[ \nabla \SN( x\opt ; a ) ]_n
	\in
	\sign ( x\opt_n ),
	\ \ 
	n = 1, \dots, N.
\end{equation}
\end{theorem}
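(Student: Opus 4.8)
The plan is to combine the textbook first-order characterization of minimizers of a convex function with the decomposition \eqref{eq:Fsimp}, which already isolates a smooth part of $ F $ from the nonsmooth \la\ term. First, the hypothesis on $ a = (a_1, a_2) $ is exactly the condition needed to invoke Theorem \ref{thm:hrzo}, so $ F $ is convex; hence $ x\opt $ is a global minimizer of $ F $ if and only if $ 0 \in \partial F(x\opt) $, where $ \partial F $ denotes the convex subdifferential. This is the step where convexity of $ F $ is genuinely used: without it, Fermat's rule only detects stationary points rather than global minimizers.

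Second, I would compute $ \partial F $ from \eqref{eq:Fsimp}. Write $ F = G + \lam \norm{ x }_1 $ with $ G(x) = \half \norm{ y - H x }_2^2 + \lam \SN(x; a) $. The quadratic term is smooth with gradient $ H\tr(Hx - y) $; the term $ \SN $ is a finite sum of copies of the bivariate function $ S $, which is twice continuously differentiable by Lemma \ref{lemma:defS}, so $ \SN $ is $ C^1 $ with gradient given componentwise by \eqref{eq:SNgrad} (the partials $ S_1, S_2 $ being elementary). Since $ G $ is continuously differentiable, the subdifferential splits additively, $ \partial F(x) = \nabla G(x) + \lam\, \partial \norm{ x }_1 = H\tr(Hx - y) + \lam \nabla \SN(x; a) + \lam\, \partial \norm{ x }_1 $; no constraint qualification is required precisely because one of the two summands is everywhere differentiable.

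Third, I would use separability of the \la\ subdifferential, $ \partial \norm{ x }_1 = \sign(x_1) \times \cdots \times \sign(x_N) $. Then $ 0 \in \partial F(x\opt) $ holds if and only if there exists $ v \in \RR^N $ with $ v_n \in \sign(x\opt_n) $ for every $ n $ and $ H\tr(Hx\opt - y) + \lam \nabla \SN(x\opt; a) + \lam v = 0 $; solving for $ v $ gives $ v_n = (1/\lam)\,[H\tr(y - Hx\opt)]_n - [\nabla \SN(x\opt; a)]_n $, and imposing $ v_n \in \sign(x\opt_n) $ for all $ n $ is exactly \eqref{eq:optcond}. Reading the vector inclusion coordinate by coordinate completes the argument in both directions.

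The only point requiring care is the additivity of the subdifferential in the second step — one must be sure the smooth-plus-\la\ split neither creates nor destroys subgradients — and I expect this to be the main (though minor) obstacle. I would dispatch it by citing the standard fact that $ \partial(g + h) = \nabla g + \partial h $ whenever $ g $ is $ C^1 $, noting that here the non-differentiable, non-convex, non-separable regularizer has been rewritten in \eqref{eq:Fsimp} precisely so that everything nonsmooth is collected into the convex separable term $ \lam \norm{ x }_1 $. Everything else is routine evaluation of $ \nabla \SN $ and bookkeeping over the components.
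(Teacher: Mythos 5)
Your proposal is correct and follows essentially the same route as the paper: the paper also invokes $0 \in \partial F(x\opt)$ for the convex $F$, uses the decomposition \eqref{eq:Fsimp} to split off the differentiable part $\half\norm{y-Hx}_2^2 + \lam\SN(x;a)$ (with gradient from \eqref{eq:SNgrad}), applies the separable subdifferential of the \la\ norm, and reads the inclusion componentwise to obtain \eqref{eq:optcond}. Your explicit remark that the sum rule $\partial(g+h) = \nabla g + \partial h$ needs only differentiability of $g$ (not its convexity) is a point the paper leaves implicit, but it is the same argument.
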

This condition can be depicted using a scatter plot
as in Fig.~\ref{fig:deconv1_scatter} below. 
The points in the scatter plot 
show the left-hand-side of \eqref{eq:optcond} versus
 $ x_n $ for $ n = 1, \dots, N $.
A signal $ x\opt $ is a minimizer of $ F $ if and only if
the points in the scatter plot lie on the graph of the set-valued signum function
(e.g., Fig.~\ref{fig:deconv1_scatter}).

\subsection{Sparse Deconvolution}
\label{sec:deconv}

We apply Theorem \ref{thm:hrzo} to the sparse deconvolution problem. 
In this case, the linear operator $ H $ represents convolution,
i.e., 
\begin{equation}
	[H x]_n = \sum_k h_{n - k} \, x_{ k }.
\end{equation}
That is, $ H $ is a Toeplitz matrix.
It represents a linear time-invariant (LTI) system 
with frequency response given by
the Fourier transform of $ h $,
\begin{equation}
	\label{eq:Hom}
	H( \om ) =  \sum_n h_n \, \myE^{ - \myJ \om n}.
\end{equation}
Similarly, 
the matrix $ P $ in \eqref{eq:defP} represents
an LTI system with a real-valued frequency response,
\begin{align}
	P( \om )
	& = p_1 \myE^{-\myJ \om} + p_0 + p_1  \myE^{\myJ \om} 
	\\
	\label{eq:Pcos}
	& = p_0 + 2 p_1 \cos(\om).
\end{align}
Specializing Theorem \ref{thm:hrzo} to the problem of deconvolution, 
we have the following result. 

\begin{theorem}
\label{thm:filt}
Let $ H $ in \eqref{eq:defFdeconv} represent convolution.
Let $ P $ in \eqref{eq:Pcos} satisfy
\begin{equation}
	\label{eq:ZPH}
	0 \le P( \om) \le \abs{ H( \om ) }^2,
	\ \ 
	\forall \om,
\end{equation}
where $ H(\om) $ is given by \eqref{eq:Hom}.
If  $ 0 \le a_1 \le P(0)/\lam $
and $ 0 \le a_2 \le P(\pi)/\lam $,
then $ F $ in \eqref{eq:defFdeconv} is convex.
\end{theorem}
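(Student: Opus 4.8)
The plan is to reduce Theorem~\ref{thm:filt} to Theorem~\ref{thm:hrzo}: I will show that the pointwise frequency-domain inequality \eqref{eq:ZPH} is precisely the matrix inequality $ 0 \mle P \mle H\tr H $ required there, and that the symbol of $ P $ evaluated at $ \om = 0 $ and $ \om = \pi $ yields $ p_0 + 2 p_1 $ and $ p_0 - 2 p_1 $, respectively.

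First I would record two Parseval-type identities. For $ x \in \RR^N $ write $ X(\om) = \sum_n x_n \myE^{-\myJ \om n} $. Since $ H $ represents convolution by $ h $, the Gram matrix $ H\tr H $ is Toeplitz with entries given by the autocorrelation of $ h $, whose discrete-time Fourier transform is $ \abs{H(\om)}^2 $; hence $ x\tr H\tr H x = \norm{H x}_2^2 = \frac{1}{2\pi}\int_{-\pi}^{\pi} \abs{H(\om)}^2\, \abs{X(\om)}^2\, d\om $. Likewise, $ P $ in \eqref{eq:defP} is tridiagonal Toeplitz with symbol $ P(\om) = p_0 + 2 p_1 \cos\om $ from \eqref{eq:Pcos}, and a short direct computation (matching $ x\tr P x = p_0 \sum_n x_n^2 + 2 p_1 \sum_n x_n x_{n-1} $ against the Fourier coefficients of $ p_0 + 2 p_1 \cos\om $) gives $ x\tr P x = \frac{1}{2\pi}\int_{-\pi}^{\pi} P(\om)\, \abs{X(\om)}^2\, d\om $.

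Next, from \eqref{eq:ZPH} we have $ 0 \le P(\om) \le \abs{H(\om)}^2 $ for every $ \om $; multiplying by $ \abs{X(\om)}^2 \ge 0 $ and integrating gives $ 0 \le x\tr P x \le x\tr H\tr H x $ for all $ x \in \RR^N $, i.e.\ $ 0 \mle P \mle H\tr H $. In particular $ P $ is positive semidefinite (so Lemma~\ref{lemma:tridiag} applies to it) and $ p_0 - 2 p_1 = P(\pi) \ge 0 $. Evaluating the symbol at $ \om = 0 $ and $ \om = \pi $ gives $ P(0) = p_0 + 2 p_1 $ and $ P(\pi) = p_0 - 2 p_1 $, so the hypotheses $ 0 \le a_1 \le P(0)/\lam $ and $ 0 \le a_2 \le P(\pi)/\lam $ are identical to the conditions \eqref{eq:aipconds} of Theorem~\ref{thm:hrzo}. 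Applying Theorem~\ref{thm:hrzo} to this $ P $ then yields that $ F $ in \eqref{eq:defFdeconv} is convex, completing the proof.

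There is no deep obstacle here; the only point requiring care is the justification that the finite Gram matrix $ H\tr H $ genuinely has symbol $ \abs{H(\om)}^2 $ — this relies on $ x $ being finitely supported (so no boundary/truncation corrections enter) together with the fact that the autocorrelation sequence of $ h $ is the inverse Fourier transform of $ \abs{H(\om)}^2 $ — and the analogous but entirely elementary identity for the tridiagonal matrix $ P $. Once these two spectral representations are in hand, the remainder is bookkeeping.
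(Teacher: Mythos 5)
Your proposal is correct and takes essentially the same route as the paper: derive the matrix inequality $0 \mle P \mle H\tr H$ from the frequency-domain condition \eqref{eq:ZPH} and then invoke Theorem~\ref{thm:hrzo}, noting $P(0) = p_0 + 2p_1$ and $P(\pi) = p_0 - 2p_1$. The only difference is in how that inequality is justified --- you use a Parseval identity for quadratic forms with finitely supported $x$, while the paper passes through doubly-infinite Toeplitz operators and takes principal submatrices --- and both hinge on the same fact (which you rightly flag) that the finite Gram matrix $H\tr H$ is the finite section of the Toeplitz operator with symbol $\abs{H(\om)}^2$, i.e., no truncation corrections enter.
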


\begin{proof}
Using the convolution property of the discrete-time Fourier transform, condition \eqref{eq:ZPH} is equivalent to $ 0 \mle P_{\infty} \mle H_{\infty}\tr  H_{\infty} $,
where these are doubly-infinite Toeplitz matrices corresponding to discrete-time signals defined on $ \ZZ $.
Since any principal sub-matrix of a positive semidefinite matrix is also positive semidefinite, 
the inequality is also true for finite matrices $ P $ and $ H\tr H $,
which can be recognized as principal sub-matrices of the corresponding doubly-infinite matrices.  
Therefore, $ 0 \mle P \mle H\tr H $,
and by Theorem \ref{thm:hrzo},
the objective function $ F $ is convex
if $ a_i $ satisfy \eqref{eq:aipconds}.
Noting that $ P(0) = p_0 + 2 p_1 $ and $ P(\pi) = p_0 - 2 p_1 $
yields the result.
\end{proof}

To induce sparsity as strongly as possible, 
$ P( \om ) $ should be as close as possible to the upper bound $ \abs{ H( \om ) }^2 $.
The determination of $ P( \om ) $ satisfying such constraints can
be efficiently and exactly performed using semi-definite programming (SDP)
as described by Dumitrescu \cite{Dumitrescu_2007}. 
Since $ P $ is low-order here, the SDP computation is negligible. 

An example is illustrated in Fig.~\ref{fig:filter1}.
The impulse response $ h $
is shown in Fig.~\ref{fig:filter1}(a).
The square magnitude of the frequency response $ \abs{ H( \om ) }^2 $
is shown in Fig.~\ref{fig:filter1}(b).
The frequency response 
$
	P ( \om ) = 0.4 + 0.2 \cos(\om) 
	$
is real-valued, non-negative,
and approximates $ \abs{ H( \om ) }^2 $ from below.
According to Theorem \ref{thm:filt}, the objective function $ F $ is
convex if
$ 0 \le a_1 \le 0.6/ \lam $
and
$ 0 \le a_2 \le 0.2/ \lam $.
This filter will be used in Example 1 below (Sec.~\ref{sec:EG1}).

\begin{figure}[t]
	\centering
		\includegraphics{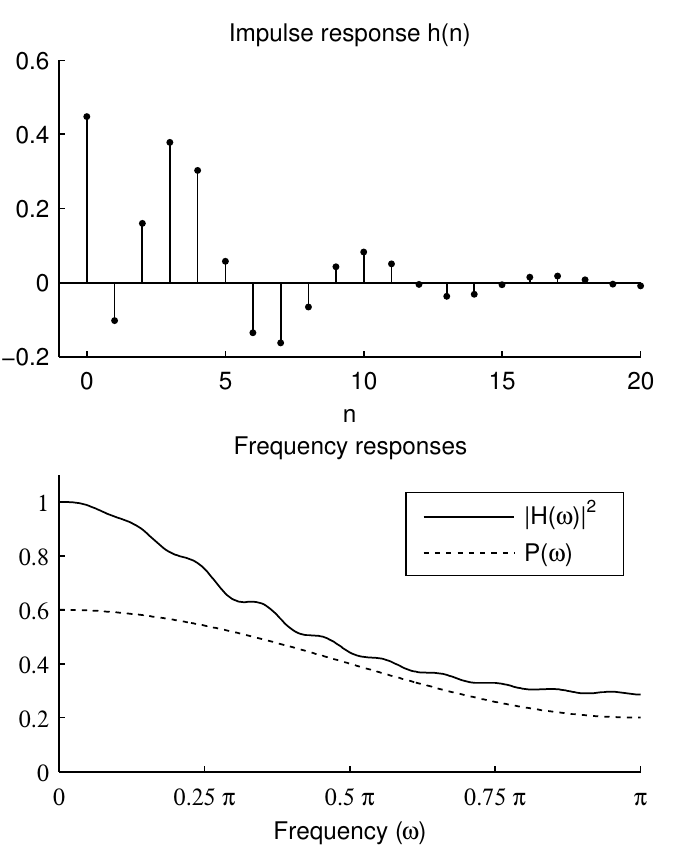}
	\caption{
		Filters $ H(\om) $ and  $ P(\om) $ for Example 1.
	}
	\label{fig:filter1}
\end{figure}

\begin{figure}[t]
	\centering
		\includegraphics{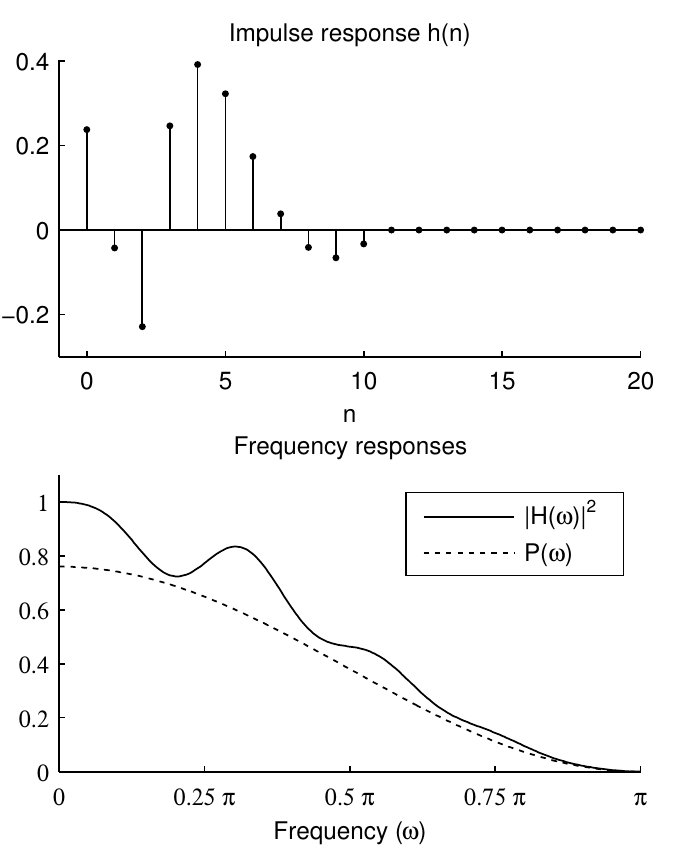}
	\caption{
		Filters $ H(\om) $ and  $ P(\om) $ for Example 2.
	}
	\label{fig:filter2}
\end{figure}

Another example is illustrated in Fig.~\ref{fig:filter2}. 
The frequency response $ H( \om ) $ has a null at $ \om = \pi $.
Hence, the system $ H $ is not invertible.
Since $ H(\pi) = 0 $,
any $ P $ satisfying \eqref{eq:ZPH} also has $ P(\pi) = 0 $.
We find $ P( \om ) = 0.38 ( 1 + \cos \om ) $ satisfies \eqref{eq:ZPH};
see Fig.~\ref{fig:filter2}(b).
Therefore, 
according to Theorem~\ref{thm:filt}, the objective function $ F $ is
convex if $ 0 \le a_1 \le 0.76/ \lam $ and $ a_2 = 0 $.
For $ \{ a_1 > 0, a_2 = 0 \} $, the multivariate penalty is non-convex and non-separable. 
For $ \{ a_1 = a_2 = 0 \} $, the penalty is simply the \la\ norm
(convex and separable).
This filter will be used in Example 2 below (Sec.~\ref{sec:EG2}).

\bigskip

In reference to the filters $ H $ illustrated in Figs.~\ref{fig:filter1} and \ref{fig:filter2},
it is informative to consider the case of a separable
penalty. 
If $ \psi $ is a separable penalty (i.e., $ a_1 = a_2 $), 
then the objective function $ F $ is
convex only if $ 0 \le a_1 = a_2 \le \min_{\om}  \abs{ H(\om) }^2 $.
For the filter of Fig.~\ref{fig:filter1} this leads to the constraint
 $ 0 \le a_1 = a_2 \le 0.26 $,
 meaning that the separable penalty may be non-convex (i.e., $ a_1 = a_2 > 0 $ is allowed).   
On the other hand, 
for the filter of Fig.~\ref{fig:filter2}
this leads to the constraint  $ a_1 = a_2 = 0 $,
i.e., $ \psi(x, a) = \abs{ x_1 } + \abs{ x_2 } $.
When the filter $ H $ is not invertible, 
the only non-convex penalties maintaining convexity of the objective function $ F $
are \emph{non-separable} penalties. 
Since inverse problems often involve singular or nearly singular operators $ H $,
this motivates the development of non-separable penalties as herein.

\section{Algorithms}

\subsection{Iterative L1 minimization }

We present an iterative L1 norm minimization algorithm
to solve \eqref{eq:defFdeconv}.
The derivation is based on majorization-minimization (MM)
\cite{FBDN_2007_TIP}.
The MM principle consists of the iteration
\begin{equation}
	\label{eq:defMM}
	x\iter{k+1} \in \arg\min_{x} F\maj (x; x\iter{k})
\end{equation}
where
$ k $ is the iteration index  and
$ F\maj $ denotes a majorizer of the objective function $ F $, i.e., 
\begin{align}
	F\maj (x; v) & \ge F(x), \  \ \text{for all $x$, $v$}
	\\
	F\maj (v; v) & = F(v), \ \ \text{for all $v$.}
\end{align}

\begin{figure}
	\centering
	\includegraphics{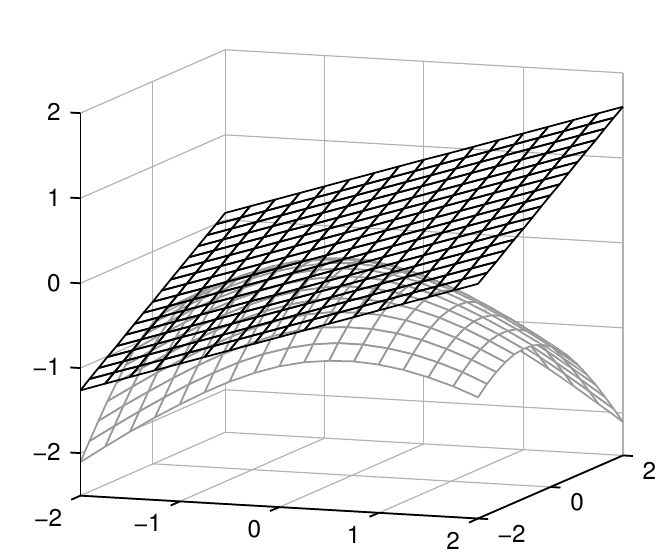}
	\caption{
		Majorization of concave function $ S $ by linear function $ S\maj $. 
	}
	\label{fig:linmaj}	
\end{figure}

The function $ S \colon \RR^2 \to \RR $ defined in Lemma \ref{lemma:defS}
is twice continuously differentiable and concave.
Therefore, a majorizer of $ S $ 
is given by
\begin{equation}
	S\maj (x; v) = S(v) + [\nabla S(v)]\tr (x - v)
\end{equation}
where $ \nabla S $ is the gradient of $ S $.
See Fig.~\ref{fig:linmaj}.
The majorizer $ S\maj (x; v) $ is linear in $ x $; hence convex in $ x $.
A majorizer of the bivariate penalty $ \psi $ 
is then obtained using \eqref{eq:defpsi}, 
\begin{align}
	\psi\maj (x; v) & = S\maj(x; v) + \norm{ x }_1
	\\
	& =  S(v) + [\nabla S(v)]\tr (x - v) + \norm{ x }_1.
\end{align}
The majorizer $ \psi\maj (x; v) $ is convex in $ x $, it being a sum of convex functions.

Recall the objective function $ F $ in \eqref{eq:defFdeconv} 
can be expressed as \eqref{eq:Fsimp}.
Therefore, a majorizer of the objective function $ F $ in \eqref{eq:defFdeconv} is given by
\begin{equation}
	F\maj(x; v) = 
	\half \norm{ y - H x }_2^2
	+
	\lam \SN\maj( x ; v ; a )
	+
	\lam \norm{ x }_1
\end{equation}
where
$ \SN\maj( x ; v ; a ) $ is a majorizer of $ \SN( x ; a ) $.
We recall the bivariate function $ S $
is concave and twice continuously differentiable.
Therefore, the $ N $-variate function $ \SN $ 
is concave as it is a sum of concave functions.
Likewise, $ \SN $ is twice continuously differentiable. 
Therefore, a majorizer of $ \SN $ is given by
\begin{equation}
	\SN\maj (x; v; a ) = \SN(v ; a) + [\nabla \SN(v ; a )]\tr (x - v)
\end{equation}
where $ \nabla \SN $ is given by \eqref{eq:SNgrad}.
Hence, $ F\maj $ is given by
\begin{equation}
	\label{eq:defFM}
	F\maj(x; v)
	= 
		\half \norm{ y - H x }_2^2
		+
		\lam
		 [\nabla \SN(v ; a )]\tr x
		+ 
		\lam \norm{ x }_1
		+ C(v)
\end{equation}
where $ C(v) $ does not depend on $ x $.
The MM iteration \eqref{eq:defMM} is then given by
\begin{equation}
	\label{eq:defMMF}
	x\iter{k+1} \in \arg\min_{ x \in \RR^N } 
		\Bigl\{
		\half \norm{ y - H x }_2^2
		+
		\lam
		 [\nabla \SN( x\iter{k} ; a )]\tr x
		+ 
		\lam \norm{ x }_1
		\Bigr\}.
\end{equation}
This is a standard \la\ norm optimization problem which can be solved
by several methods (e.g. proximal methods, ADMM).
Hence, the solution to \eqref{eq:defFdeconv} can be 
obtained by iterative \la\ norm minimization. 

Based on the theory of MM algorithms,
it is guaranteed that $ x\iter{k} $ converges to the minimizer of $ F $ 
when $ F $ is strictly convex
\cite{Jacobson_2007_TIP, Byrne_2014_IterOpt}.
If $ H $ is not invertible, then $ F $ may be convex without being strictly convex.
In this case:
(a) the function value $ F(x\iter{k}) $ converges to the minimum value of $ F $,
and
(b) if $ F $ has a unique minimizer, then $ x\iter{k} $ converges to it.
See Theorems 4.1 and 4.4 of 
\cite{Jacobson_2007_TIP}.

\subsection{Iterative thresholding}
\label{sec:ISTA}

We present an iterative thresholding algorithm to solve \eqref{eq:defFdeconv}.
The algorithm is an immediate application of forward-backward splitting (FBS) \cite{Combettes_2005, Combettes_2011_chap}.
The FBS algorithm minimizes a function of the form $ f_1 + f_2 $ where
both $ f_1 $ and $ f_2 $ are convex and additionally
$ \nabla f_1 $ is Lipschitz continuous.
To apply the FBS algorithm to problem \eqref{eq:defFdeconv}, 
we express $ F $ using \eqref{eq:Fsimp}.
The first two terms of \eqref{eq:Fsimp} constitute the smooth convex function $ f_1 $.
We remark that since $ \Theta $ is concave, the Lipschitz constant of $ \nabla f_1 $ is bounded
by $ \rho $ where $ \rho $ is the maximum eigenvalue of $ H\tr H $.
The $ \ell_1 $ norm term in \eqref{eq:Fsimp} constitutes the (non-smooth) convex function $ f_2 $.
An FBS algorithm to solve problem \eqref{eq:defFdeconv} is then given by
\begin{subequations}
\label{eq:defMM_ISTA}
\begin{align}
	z\iter{k} & = 
	x\iter{k} +  \mu \Bigl[  H\tr (y - H x\iter{k}) - \lam \nabla \SN(x\iter{k} ; a )  \Bigr]
	\\
	x\iter{k+1} & = \soft( z\iter{k} , \mu \lam )
\end{align}
\end{subequations}
where
$ 0 < \mu < 2 / \rho $
where $ \rho $ is the maximum eigenvalue of $ H\tr H $.
The parameter $ \mu $ can be viewed as a step-size.
The soft thresholding function
\begin{equation}
	\soft(t , T)
	:= 
	\begin{cases}
		t - T,  \ \ & t \ge T
		\\
		0, & \abs{ t } \le T
		\\
		t + T, & t \le -T
	\end{cases}
\end{equation}
is applied element-wise to vector $ z\iter{k} $.
As an FBS algorithm, it is guaranteed that $ x\iter{k} $ converges to a minimizer of $ F $.

This algorithm resembles the classical
iterative shrinkage/thresholding algorithm (ISTA) \cite{DDDM_2003, Fig_2003_TIP}.
Note that ISTA was derived in \cite{DDDM_2003, Fig_2003_TIP} using 
MM and was shown to converge for $ 0 < \mu < 1/\rho $.
However, 
the same algorithm derived using FBS is known to converge for twice this step size.
This is a practical advantage because the larger step-size generally yields
faster convergence of the algorithm.
We implement the algorithm with $ \mu = 1.9 / \rho $.

We further note that the iterative thresholding algorithm \eqref{eq:defMM_ISTA}
has the property that $ F( x\iter{k} ) $ monotonically decreases.
For 
$ 0 < \mu < 1/\rho $, this monotonic decreasing property follows from the MM-based derivation.
For $ 0 < \mu < 2/\rho $,
the proximal theory of FBS \cite{Combettes_2005, Combettes_2011_chap}
ensures convergence but not the monotonic decreasing property.
However, for this larger range of $ \mu $
the algorithm does in fact have
the monotonic decreasing property \cite{Bayram_2015_NCISTA_arxiv, She_2009_EJS}.

\section{Numerical Examples}
\label{sec:EGS}

\subsection{Example 1}
\label{sec:EG1}

\begin{figure}[t]
	\centering
		\includegraphics[scale = 0.85]{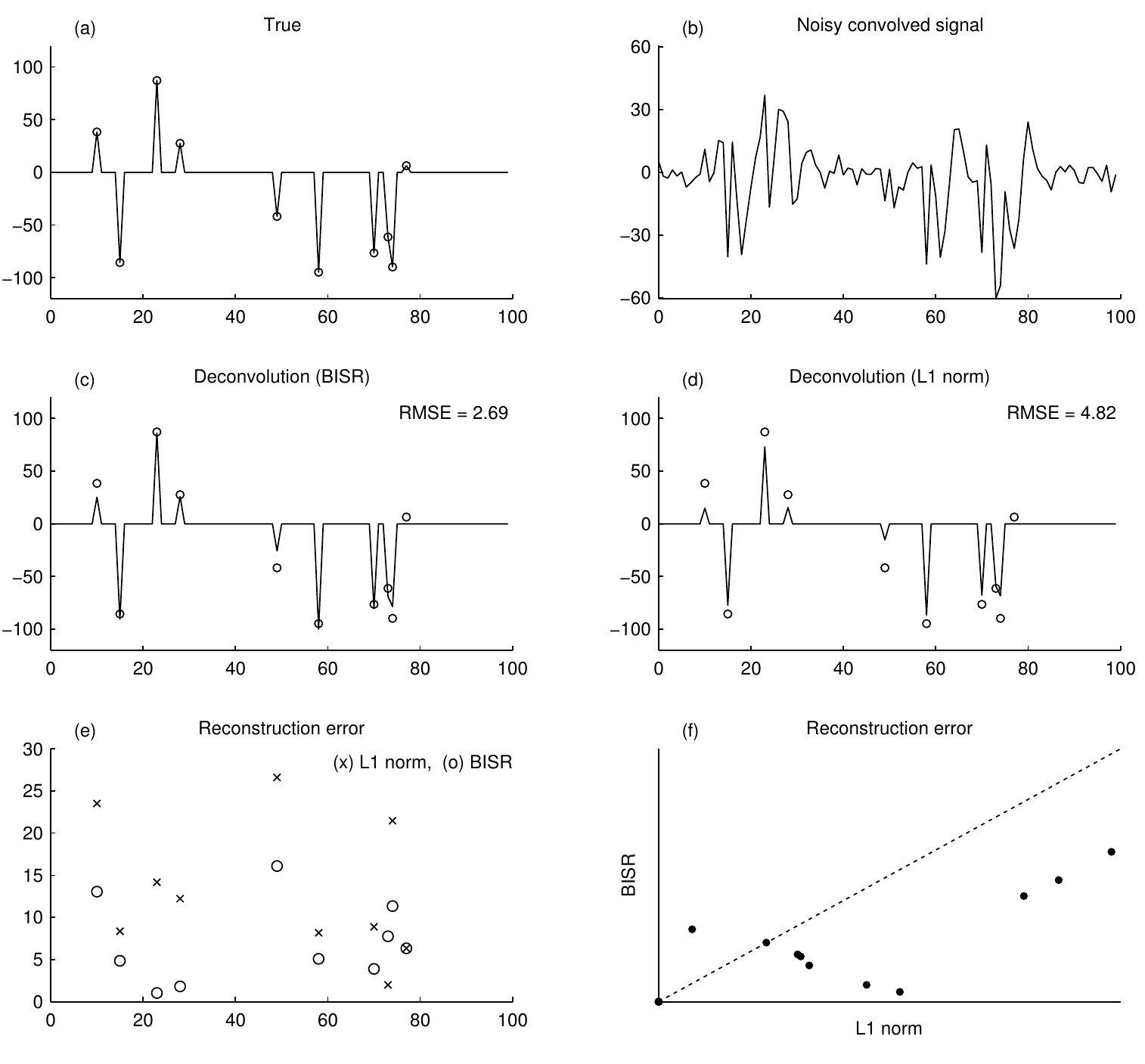}
	\caption{
		Example 1 of sparse deconvolution
		using bivariate sparse regularization (BISR).
	}
	\label{fig:deconv1}
\end{figure}

The 100-point sparse signal illustrated in Fig.~\ref{fig:deconv1}(a)
is convolved with the impulse response $ h $ shown in Fig.~\ref{fig:filter1}.
The convolved signal is corrupted by additive white Gaussian noise (AWGN)
with standard deviation $ \sigma = 4 $.
The corrupted signal is shown in Fig.~\ref{fig:deconv1}(b).
To perform sparse deconvolution using BISR,
we need to define the univariate penalty $ \phi $ and
set the regularization parameters $ \lam $ and $ a = (a_1, a_2) $
in the objective function \eqref{eq:defFdeconv}.
For $ \phi $, 
we use the arctangent penalty in Table~\ref{table:penalties}.
We set $ \lam $ straightforwardly as
\begin{equation}
	\label{eq:lamrule}
	\lam = \beta \sigma \norm{ h }_2 
\end{equation}
which follows from an analysis of
sparse optimality conditions
\cite{Fuchs_2007_JSTSP, Fuchs_2009_sysid, Selesnick_2014_TSP_MSC}. 
We set $ \beta = 2.5 $, similar to the `three sigma' rule. 
This choice of $ \beta $ is not intended to minimize the mean square error,
but rather to inhibit false impulses appearing in the estimated sparse signal.

To set $ a $, we use the non-negative function $ P(\om) $ 
shown in Fig.~\ref{fig:filter1}.
This filter has $ P(0) = 0.6 $ and $ P(\pi) = 0.2 $.
Therefore, according to Theorem \ref{thm:filt},
the objective function is convex if
$ 0 \le a_1 \le 0.6/\lam $
and 
$ 0 \le a_2 \le 0.2/\lam $.
To maximally induce sparsity, we set $ a_i $ to their respective maximal values. 
To perform deconvolution using BISR (i.e., to minimize the objective function $ F $)
we use the iterative thresholding algorithm (Sec.~\ref{sec:ISTA})
with a step-size of $ \mu = 1.9 / \rho $.
We run the algorithm until a stopping condition is satisfied.
As a stopping condition, we use
$
	\norm{ x\iter{k+1} - x\iter{k} }_\infty 
	\le 10^{-4}
	\times
	\norm{ x\iter{k} }_\infty
$
where $ k $ is the iteration index.
The run-time (averaged over 50 realizations) is about 8.8 milliseconds
using a 2013 MacBook Pro (2.5 GHz Intel Core i5) running Matlab R2011a.

The BISR solution is shown in Fig.~\ref{fig:deconv1}(c).
It has a root-mean-square-error (RMSE) of 2.7,
about 56\% that of the \la\ norm solution,
shown in Fig.~\ref{fig:deconv1}(d) for comparison.
Figure \ref{fig:deconv1}(e) shows the reconstruction error of both solutions.
The relative accuracy of the BISR solution is further illustrated in the scatter plot of Fig.~\ref{fig:deconv1}(f),
which shows the error of the two solutions plotted against each other.
Most of the points lie below the diagonal line, meaning that the
BISR solution has less error than the \la\ norm solution. 

\begin{figure}
	\centering
		\includegraphics{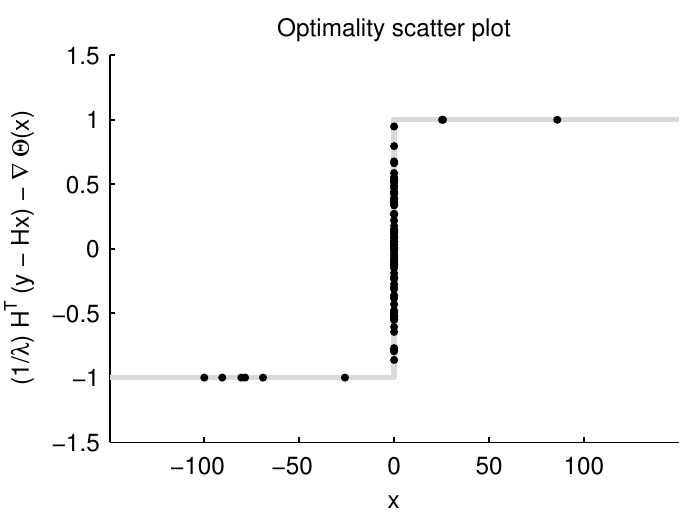}
	\caption{
		Optimality condition for Example 1.
	}
	\label{fig:deconv1_scatter}
\end{figure}

We verify the optimality of the obtained BISR solution using \eqref{eq:optcond}
as illustrated by the scatter plot in Fig.~\ref{fig:deconv1_scatter}.
The obtained solution is confirmed to be a global minimizer of the objective function
because
the points in the scatter plot lie on the graph of the signum function.

We compare the proposed BISR method with several other algorithms
in Table~\ref{table:rmse1} and Fig.~\ref{fig:deconv1_rmse}.
For the comparison, 
we vary the noise standard deviation
$ \sigma $ and generate 200 sparse signals and noise realizations
for each value of $ \sigma $.
Each sparse signal consists of 10 randomly located impulses 
with amplitudes uniformly distributed between $-$100 and 100. 
Table~\ref{table:rmse1} gives the average RMSE and run-time
(with all algorithms implemented in Matlab on the same computer).

\begin{table}
	\caption{\label{table:rmse1}
		Average RMSE and run-time for Example 1}
	\begin{center}
	\begin{tabular}{@{} l ccccc r@{}} 
		\toprule 
Algorithm   & $ \sigma\!=\!1 $ & $ \sigma\!=\!2 $  & $ \sigma\!=\!4 $  & $ \sigma\!=\!8 $  & $ \sigma\!=\!16 $   & (msec)    \\ 
		\midrule 
L1              & 1.17       & 2.32       & 4.43       & 8.19       & 13.47    &  3.5  \\ 
L1+debiasing    & 0.62       & 1.26       & 2.57       & 5.46       & 11.92 &   3.6   \\ 
Lp (p = 0.5)    & 0.66       & 1.19       & 2.39       & 5.12       & 12.05    & 4.3  \\ 
SBR (L0)        & 0.65       & 1.15       & 2.38       & 5.33       & 15.35   &  2.1 \\ 
IMSC            & 0.50       & 1.00       & 2.23       & 5.00       & 11.02   &  323.9  \\ 
IPS             & 0.51       & 1.02       & 2.23       & 4.89       & 10.96   &    5.1 \\ 
BISR (log)      & 0.52       & 1.11       & 2.53       & 5.62       & 11.58  &  8.2  \\ 
BISR (rat)      & 0.51       & 1.06       & 2.41       & 5.42       & 11.46   &  8.6 \\ 
BISR (atan)     & 0.50       & 1.03       & 2.30       & 5.13       & 11.22  &  8.8  \\ 
	\bottomrule 
	\end{tabular} 
\end{center}
\end{table}

\begin{figure}
	\centering
	\includegraphics{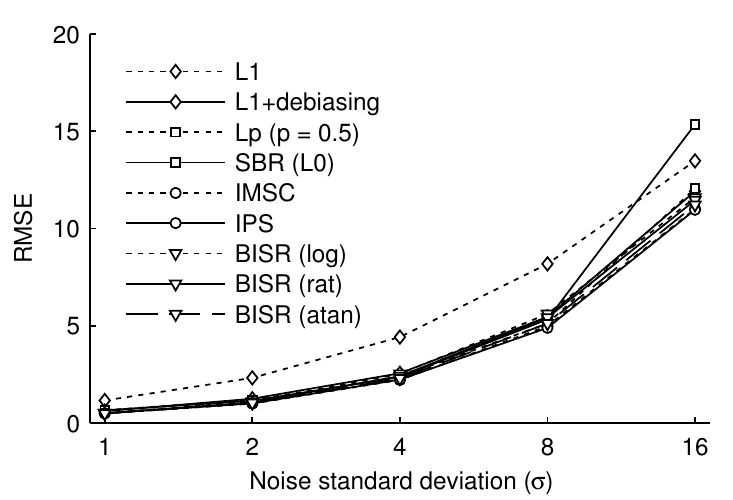}
	\caption{Average RMSE for Example 1.}
	\label{fig:deconv1_rmse}
\end{figure}

Included in the comparison are the following methods:
$ \ell_1 $ norm regularization, 
$ \ell_1 $ norm regularization with debiasing, 
$ \ell_p $ pseudo-norm regularization with $ p $ of $ 0.5 $, 
the \emph{single best replacement} (SBR) algorithm \cite{Soussen_2011_TSP},
\emph{iterative maximally sparse convex} (IMSC) regularization \cite{Selesnick_2014_TSP_MSC},
and 
the \emph{iterative p-shrinkage} (IPS) algorithm \cite{Voronin_2013_ICASSP, Woodworth_2015_arXiv}.

Each method provides an improvement over $ \ell_1 $ norm regularization. 
The first approach is $ \ell_1 $ norm regularization with debiasing.
Debiasing is a post-processing step that applies unbiased least squares approximation
to re-estimate the non-zero amplitudes \cite{Figueiredo_2007_GPSR}.
This method solves the systematic underestimation of non-zero amplitudes
from which $ \ell_1 $ norm regularization suffers, 
but it is still influenced by noise in the observed data.
As shown in Table~\ref{table:rmse1}, several other methods 
generally perform better than $ \ell_1 $ regularization with debiasing. 

Non-convex regularization using the $ \ell_p $ pseudo-norm with 
$ 0 < p < 1 $ is also a common approach for improving upon $ \ell_1 $ norm regularization. 
We use $ p = 0.5 $ with $ \lambda $ set according to \eqref{eq:lamrule}
with $ \beta = 8 $, a value we found worked well on average for this example.
This method performed similarly as $ \ell_1 $ norm regularization with debiasing. 

For non-convex regularization using the $ \ell_0 $ pseudo-norm
we use the SBR algorithm \cite{Soussen_2011_TSP}.
A comparison of several methods for $ \ell_0 $ pseudo-norm regularization
showed the SBR algorithm to be state-of-the-art \cite{Selesnick_2014_TSP_MSC}.
We use SBR with $ \lam $ set according to \eqref{eq:lamrule} with $ \beta = 70 $, which we found 
worked well here except for high noise levels.
We were unable to find a single $ \beta $ value that worked well
over the range of $ \sigma $ considered here. 
The SBR algorithm performed similarly to $ \ell_p $ pseudo-norm regularization, 
except for the highest noise level of $ \sigma = 8 $.

The IMSC algorithm for sparse deconvolution proceeds by solving 
a sequence of convex sub-problems \cite{Selesnick_2014_TSP_MSC}. 
The regularization term of each sub-problem is individually designed to 
be maximally non-convex (i.e., sparsity-inducing).
In contrast to the proposed BISR approach,
IMSC does not solve a prescribed optimization problem --- each iteration yields a new convex problem to be solved.
The formulation of each problem in IMSC requires the solution to a semidefinite problem (SDP);
therefore, the IMSC approach is very slow.
(IMSC is up to 100 times slower than other algorithms; see Table \ref{table:rmse1}.)
In IMSC, the parameter $ \lam $ can be set the same as for $ \ell_1 $ deconvolution;
hence, we again set $ \lam $ using \eqref{eq:lamrule} with $ \beta = 2.5 $. 
From Table~\ref{table:rmse1}, IMSC performs better than the preceding methods.

The IPS algorithm \cite{Voronin_2013_ICASSP, Woodworth_2015_arXiv}
generalizes the classic iterative shrinkage-thresholding algorithm (ISTA) \cite{DDDM_2003, Fig_2003_TIP}.
The IPS algorithm replaces soft-thresholding in ISTA by a 
threshold function that does not underestimate large-amplitude signal values. 
Notably, IPS can be understood as a method that seeks to minimize a 
prescribed (albeit implicit) non-convex objective function;
furthermore, each iteration of IPS can be understood as the exact minimization
of a convex problem.
Moreover, 
as IPS is an iterative thresholding algorithm, it is computationally very efficient.
For this example, IPS gives excellent results when
the parameter $ \lam $ is set using \eqref{eq:lamrule} with $ \beta = 2.5 $. 
It performs about the same as IMSC but runs much faster.

To complete the comparison, we compare the proposed BISR method 
using the three penalties listed in Table \ref{table:penalties}. 
(The BISR method can be used with any penalty function satisfying the properties
listed at the beginning of Sec.~\ref{sec:pen1}.)
Referring to Table~\ref{table:rmse1}, 
the three penalties give about the same result at low noise levels;
at higher noise levels, the arctangent penalty tends to perform better than the other two penalties.
Compared to the other algorithms, 
BISR with the arctangent penalty does about as well as IPS and IMSC
at low noise levels, but not quite as well at higher noise levels.
Note that the proposed BISR approach is the only algorithm
minimizing a prescribed convex objective function.

\subsection{Example 2}
\label{sec:EG2}

This is like Example 1, except we use the convolution filter $ H $ shown in Fig.~\ref{fig:filter2}.
This filter is not invertible. 
The sparse signal (Fig.~\ref{fig:deconv2}(a))
is convolved with the impulse response $ h $
and corrupted by AWGN ($ \sigma = 4 $).
We set $ \lam $ using \eqref{eq:lamrule} with $ \beta = 2.5 $ as in Example 1.
Since the filter $ P $ has $ P(0) = 0.76 $ and $ P(\pi) = 0 $,
Theorem \ref{thm:filt} states that
the objective function \eqref{eq:defFdeconv} is convex if
$ 0 \le a_1 \le 0.76/\lam $
and 
$ a_2 = 0  $.
We run the algorithm of Sec.~\ref{sec:ISTA} 
with the same stopping condition
to obtain the BISR solution  (Fig.~\ref{fig:deconv2}(c)).
The BISR solution has an RMSE of 1.8, which is about 50\% that of the
\la\ norm solution (Fig.~\ref{fig:deconv2}(d)).
As in Example~1, the optimality of the obtained BISR solution can be readily verified.

\begin{figure}[t]
	\centering
		\includegraphics[scale = 0.85]{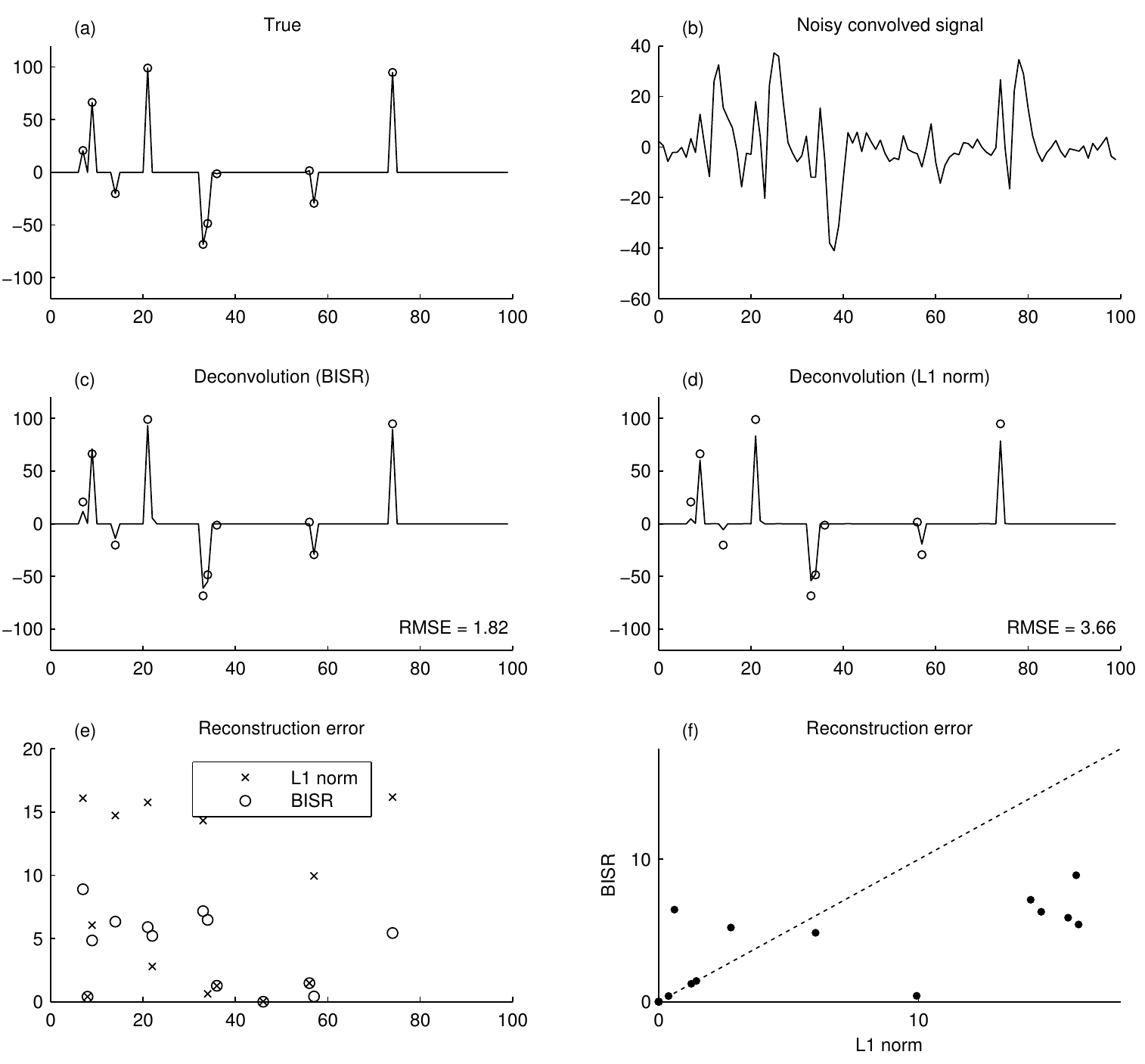}
	\caption{
		Example 2 of sparse deconvolution
		using bivariate sparse regularization (BISR).
	}
	\label{fig:deconv2}
\end{figure}

\begin{table}
	\caption{\label{table:rmse2}
		Average RMSE and run-time for Example 2}
	\begin{center}
	\begin{tabular}{@{} l ccccc r@{}} 
		\toprule 
Algorithm   & $ \sigma\!=\!1 $ & $ \sigma\!=\!2 $  & $ \sigma\!=\!4 $  & $ \sigma\!=\!8 $  & $ \sigma\!=\!16 $   & (msec)    \\ 
		\midrule 
L1              & 1.37       & 2.70       & 5.01       & 8.96       & 14.01  &   5.1 \\ 
L1+debiasing    & 0.76       & 1.55       & 3.14       & 6.56       & 13.31 &  5.2   \\ 
Lp (p = 0.5)    & 1.07       & 1.57       & 2.89       & 6.14       & 13.38    & 5.7  \\ 
SBR (L0)        & 0.73       & 1.44       & 2.73       & 6.59       & 17.64    & 3.4  \\ 
IMSC            & 0.54       & 1.18       & 2.69       & 6.26       & 12.38    & 305.9  \\ 
IPS             & 0.59       & 1.20       & 2.66       & 5.88       & 12.41   &    6.9 \\ 
BISR (log)      & 0.75       & 1.60       & 3.19       & 6.65       & 12.45  &    13.9 \\ 
BISR (rat)      & 0.73       & 1.56       & 3.08       & 6.49       & 12.37    &  14.6 \\ 
BISR (atan)     & 0.75       & 1.56       & 3.00       & 6.29       & 12.25   &   15.4 \\ 
	\bottomrule 
	\end{tabular} 
\end{center}
\end{table}

It is noteworthy in this example that 
we obtain a strictly non-convex penalty ensuring convexity of the objective function $ F $,
because in this example the convolution filter $ H $ is not invertible. 
It is possible only because the utilized penalty is non-separable. 

In Table \ref{table:rmse2} we compare the same algorithms as in Example 1. 
The regularization parameter $ \lam $ for each algorithm was chosen as in Example 1. 
The remarks made in Example 1 mostly apply here, but with a few exceptions as follows.
In contrast to the generally tendency, the $ \ell_p $ pseudo-norm
performed worse than the debiased $ \ell_1 $ norm solution for the lowest noise level ($ \sigma = 1 $).
It appears that the performance of the $ \ell_p $ pseudo-norm and SBR methods 
may degrade for low or high noise levels when
one attempts to set $ \lam $ proportional to $ \sigma $.
In addition, in contrast to Example 1, 
the BISR approach did not outperform 
$ \ell_p $ pseudo-norm or SBR for several noise levels. 
IPS performs very well at most noise levels.

\section{Limitations of Bivariate Penalties}
\label{sec:limits}

The examples in Sec.~\ref{sec:EGS} demonstrate the improvement
attainable by non-separable bivariate penalties in comparison with separable penalties;
however, the degree of improvement depends on the linear operator $ H $ in the 
data fidelity term.
In particular, for some $ H $, a bivariate penalty 
will offer little or no improvement in comparison with a separable penalty. 

In reference to Figs.~\ref{fig:filter1} and \ref{fig:filter2}, 
the effectiveness of a bivariate penalty for sparse deconvolution depends on how well
$ \abs{ H(\om) }^2 $ can be approximated by a function $ P(\om) $
of the form \eqref{eq:Pcos} satisfying condition \eqref{eq:ZPH}.
Some filters $ H $ can not be well approximated by such a filter $ P $.
For example, if $ H(\om) $ is Gaussian or has a sharp peak, 
then it can not be well approximated by such a filter $ P $.
In this case, a bivariate penalty
does not offer significant improvement in comparison with a separable penalty.
For a second example, 
if the frequency response $ H(\om) $ 
has multiple nulls (e.g., a $ K $-point moving average filter with $ K > 2 $),
then the only filter $ P $ of the form \eqref{eq:Pcos} 
satisfying condition \eqref{eq:ZPH} is identically zero, $ P(\om) = 0 $.
In this case, the bivariate penalty reduces to the \la\ norm and
the proposed BISR approach offers no improvement.

In both situations, 
a higher-order filter $ P $ is needed to accurately approximate $ H $,
and a higher-order non-separable penalty is needed to strongly induce sparsity.
Hence, to extend the applicability of non-separable sparse regularization, 
it will be necessary generalize the bivariate penalty to 
non-separable $ K $-variate penalties ($ K > 2 $).
Provided such an extension can be constructed, 
we expect the proposed BISR approach will be applicable to more general problems.

\section{Conclusion}

This paper aims to develop a convex approach for sparse signal estimation that improves upon  $ \ell_1 $ norm regularization, the standard convex approach.
We consider ill-conditioned linear inverse problems with a quadratic data fidelity term.
We focus in particular on the deconvolution problem. 
The proposed method is based on a non-convex penalty designed to ensure that the objective function is convex.
Our previous work \cite{Selesnick_2014_TSP_MSC} using this idea considered only separable (additive) penalties;
this is a fundamental limitation when the observation matrix is singular or near singular.

The non-separable bivariate penalty introduced in this paper overcomes this limitation.
The proposed bivariate sparse regularization (BISR) approach
provides a mechanism by which to improve upon $ \ell_1 $ norm regularization
while adhering to a convex framework.
The greater generality of non-separable regularization, as compared with separable regularization,
allows for the design of regularizers that more effectively induce sparsity.
Both BISR and \la\ norm regularization lead to convex optimization problems which can be solved
by similar optimization techniques. 

While we have focused on deconvolution,
we note that several filtering methods can be formulated as sparse deconvolution problems, 
e.g., peak detection \cite{Ning_2014_BEADS}
and denoising \cite{Selesnick_2015_SASS}.
Hence, the proposed approach (and extensions thereof) is not limited to deconvolution. 

\section*{Acknowledgment}

The authors gratefully acknowledge constructive comments from 
the anonymous reviewers.

\appendix



\section{Proofs}
\label{app:proofs}



\subsection{Proof of Proposition \ref{prop:gfconvex}}
\label{app:proofpen1}

\begin{proof}[Proof of Proposition \ref{prop:gfconvex}]
Let $ 0 \le a \le 1/\lam $.
By Proposition \ref{prop:sfun1d}, $ s $ is twice continuously differentiable.
Hence, so is $ g $. 
Thus, 
to show $ g $ is convex, we show its second derivative is non-negative. 
From \eqref{eq:sbound}, we have
\begin{equation}
	s''( t ; a ) \ge -a.
\end{equation}
Since $ a \le 1/\lam $, we have
\begin{equation}
	s''( t ; a ) \ge - \frac{ 1 }{ \lam }.
\end{equation}
Hence
\begin{equation}
	g''( t ) = 1 + \lam s''( t ; a ) \ge 0.
\end{equation}
Therefore, $ g $ is convex. 
From \eqref{eq:defs},
we have $ f( t ) = g( t ) + \lam \abs{  t  } $.
Since $ f $ is the sum of convex functions, $ f $ is convex.
\end{proof}

\subsection{Partial derivatives }

Some following proofs will require the partial derivatives of $ S(x; a) $.
From Definition \ref{def:Sfun}, they are as follows.
\begin{align}
	\label{eq:defS1}
	\frac{ \partial S }{ \partial x_1} ( x ; a ) & = 
	\begin{cases}
		s'( x_1 + r x_2 ; \alpha ) ,  & x \in \SP_1
		\\
		r s'( r x_1 + x_2 ; \alpha ) + (1 - r) \, s'( x_1 ; a_1 ) ,  & x \in \SP_2
		\\
		r s'( r x_1 + x_2 ; \alpha ) + (1 + r) \, s'( x_1 ; a_2 ) ,  & x \in \SP_3
		\\
		s'( x_1 + r x_2 ; \alpha )  ,  & x \in \SP_4
	\end{cases}
\end{align}

\begin{align}
	\label{eq:defS2}
	\frac{ \partial S }{ \partial x_2} ( x ; a ) & = 
	\begin{cases}
		r s'( x_1 + r x_2 ; \alpha ) + (1 - r) \, s'( x_2 ; a_1 ) ,  & x \in \SP_1
		\\
		s'( r x_1 + x_2 ; \alpha ) ,  & x \in \SP_2
		\\
		s'( r x_1 + x_2 ; \alpha ) ,  & x \in \SP_3
		\\
		r s'( x_1 + r x_2 ; \alpha ) + (1 + r) \, s'( x_2 ; a_2 ) ,  & x \in \SP_4
	\end{cases}
\end{align}

\begin{align}
	\label{eq:defS11}
	\frac{ \partial^2 S }{ \partial^2 x_1} ( x ; a ) & = 
	\begin{cases}
		s''( x_1 + r x_2 ; \alpha ) ,  & x \in \SP_1
		\\
		r^2 s''( r x_1 + x_2 ; \alpha ) + (1 - r) \, s''( x_1 ; a_1 ) ,  & x \in \SP_2
		\\
		r^2 s''( r x_1 + x_2 ; \alpha ) + (1 + r) \, s''( x_1 ; a_2 ) ,  & x \in \SP_3
		\\
		s''( x_1 + r x_2 ; \alpha )  ,  & x \in \SP_4
	\end{cases}
\end{align}

\begin{align}
	\label{eq:defS22}
	\frac{ \partial^2 S }{ \partial^2 x_2} ( x ; a ) & = 
	\begin{cases}
		r^2 s''( x_1 + r x_2 ; \alpha ) + (1 - r) \, s''( x_2 ; a_1 ) ,  & x \in \SP_1
		\\
		s''( r x_1 + x_2 ; \alpha ) ,  & x \in \SP_2
		\\
		s''( r x_1 + x_2 ; \alpha ) ,  & x \in \SP_3
		\\
		r^2 s''( x_1 + r x_2 ; \alpha ) + (1 + r) \, s''( x_2 ; a_2 ) ,  & x \in \SP_4
	\end{cases}
\end{align}

\begin{align}
	\label{eq:defS12}
	\frac{ \partial^2 S }{ \partial x_1 \partial x_2} ( x ; a ) & = 
	\begin{cases}
		r s''( x_1 + r x_2 ; \alpha ) ,  & x \in \SP_1
		\\
		r s''( r x_1 + x_2 ; \alpha )  ,  & x \in \SP_2
		\\
		r s''( r x_1 + x_2 ; \alpha )  ,  & x \in \SP_3
		\\
		r s''( x_1 + r x_2 ; \alpha )  ,  & x \in \SP_4
	\end{cases}
\end{align}

\subsection{Scaling identities }

Some following proofs will use scaling identities.
From the scaling property in Sec.~\ref{sec:pen1},
it follows that
\begin{align}
	s( t, \alpha ) 
	& = \frac{ a_1 }{ \alpha } \, s\Bigl( \frac{ \alpha }{ a_1 } t  ; a_1 \Bigr)
	\\
	\label{eq:stalpha}
	& = ( 1 + r ) \, s\Bigl( \frac{ t }{ 1 + r } ; a_1 \Bigr)
\end{align}
and similarly
\begin{align}
	s( t, a_1 ) 
	& = \frac{ 1 }{ 1 + r } \, s\bigl( ( 1 + r )  t ; \alpha \bigr)
	\\
	s'( t, a_1 ) 
	& =  s'\bigl( ( 1 + r )  t ; \alpha \bigr)
	\\
	\label{eq:s2da1}
	s''( t, a_1 ) 
	& =  (1 + r) s''\bigl( ( 1 + r )  t ; \alpha \bigr).
\end{align}
Likewise,
\begin{align}
	s( t, \alpha ) 
	& = \frac{ a_2 }{ \alpha } \, s\Bigl( \frac{ \alpha }{ a_2 } t ; a_2 \Bigr)
	\\
	\label{eq:sta2}
	& = ( 1 - r ) \, s\Bigl( \frac{ t }{ 1 - r } ; a_2 \Bigr)
\end{align}
and
\begin{align}
	s( t, a_2 ) 
	& = \frac{ 1 }{ 1 - r } \, s\bigl( ( 1 - r )  t ; \alpha \bigr)
	\\
	s'( t, a_2 ) 
	& =  s'\bigl( ( 1 - r )  t ; \alpha \bigr)
	\\
	\label{eq:s2da2}
	s''( t, a_2 ) 
	& =  (1 - r) s''\bigl( ( 1 - r )  t ; \alpha \bigr).
\end{align}


\subsection{Proof of  of Lemma \ref{lemma:defS}}
\label{sec:Sconcave}



\begin{proof}[Proof of Lemma \ref{lemma:defS}]


We first show that $ S $ in Definition \ref{def:Sfun} is consistent on the common boundaries of the sets $ A_i $.
Consider the common boundary of $ A_1 $ and $ A_4 $, i.e., the line $ \{ (x_1, 0) : x_1 \in \RR \} $.
From the $ A_1 $ side, 
\begin{align}
	S(x ; a )  \Big|_{\substack{x_2 = 0 \\ x \in A_1}}
	& = s( x_1 ; \alpha ) + (1 - r ) s( 0 ; a_1 )
	\\
	& = s( x_1 ; \alpha) 
\end{align}
where we used $ s( 0 ; \cdot ) = 0 $.
From the $ A_4 $ side,
\begin{align}
	S(x ; a )  \Big|_{\substack{x_2 = 0 \\ x \in A_4}}
	& = s( x_1 ; \alpha ) + (1 + r) \, s( 0 ; a_2 )
	\\
	& = s( x_1 ; \alpha).
\end{align}
Hence, $ S $ is consistently defined on the common boundary of $ A_1 $ and $ A_4 $.

Consider the common boundary of $ A_1 $ and $ A_2 $, i.e., the line $ \{ (x_1, x_1) : x_1 \in \RR \} $.
From the $ A_1 $ side, 
\begin{equation}
	S(x ; a )  \Big|_{\substack{x_2 = x_1 \\ x \in A_1}}
	= s( (1 + r) x_1 ; \alpha ) + (1 - r ) s( x_1 ; a_1 ).
\end{equation}
From the $ A_2 $ side,
\begin{equation}
	S(x ; a )  \Big|_{\substack{x_2 = x_1 \\ x \in A_2}}
	= s( (1+r) x_1 ; \alpha ) + (1 - r) \, s( x_1 ; a_2 ).
\end{equation}
Hence, $ S $ is consistently defined on the common boundary of $ A_1 $ and $ A_2 $.
Similarly, it can be shown that $ S $ is consistently defined on the other 
common boundaries of the sets $ A_i $.
Hence $ S $ is continuous because $ s $ is continuous.

We now show $ S $ is differentiable on $ \RR^2 $. 
We need only show $ S $ is differentiable on the common boundaries of the sets $ A_i $
because $ s $ is differentiable. 
Consider the common boundary of $ A_1 $ and $ A_4 $.
From the $ A_1 $ side,
\begin{equation}
	\frac{ \partial S }{ \partial x_1} ( x ; a ) 
	\Big|_{\substack{x_2 \to 0 \\ x \in A_1}}
	= s'( x_1 ; \alpha ) 
\end{equation}
using \eqref{eq:defS1},
and
\begin{align}
	\frac{ \partial S }{ \partial x_2} ( x ; a ) 
	\Big|_{\substack{x_2 \to 0 \\ x \in A_1}}
	& = r s'( x_1 ; \alpha ) + (1 - r) s'( 0 ; a_1 )
	\\
	& = r s'( x_1 ; \alpha )
\end{align}
using \eqref{eq:defS2}
and $ s'(0, \cdot) = 0 $.
From the $ A_4 $ side,
\begin{equation}
	\frac{ \partial S }{ \partial x_1} ( x ; a ) 
	\Big|_{\substack{x_2 \to 0 \\ x \in A_4}}
	= s'( x_1 ; \alpha ) 
\end{equation}
and
\begin{align}
	\frac{ \partial S }{ \partial x_2} ( x ; a ) 
	\Big|_{\substack{x_2 \to 0 \\ x \in A_4}}
	& = r s'( x_1 ; \alpha ) + (1 + r) s'( 0 ; a_2 )
	\\
	& = r s'( x_1 ; \alpha ).
\end{align}
Hence, the partial derivatives of $ S $ are continuous on the common of boundary of $ A_1 $ and $ A_4 $.

Consider the common boundary of $ A_1 $ and $ A_2 $.
From the $ A_1 $ side, 
\begin{align}
	\frac{ \partial S }{ \partial x_1} ( x ; a ) 
	\Big|_{\substack{x_2 \to x_1 \\ x \in A_1}}
	& = s'( (1 + r ) x_1 ; \alpha )
	\\
	& = s'( x_1 ; a_1 )
\end{align}
where we used the scaling property \eqref{eq:ssp1} and identity \eqref{eq:a1id}.
Also, using the same properties, 
\begin{align}
	\frac{ \partial S }{ \partial x_2} ( x ; a ) 
	\Big|_{\substack{x_2 \to x_1 \\ x \in A_1}}
	& = r s'( (1 + r ) x_1 ; \alpha ) + (1 - r) s'( x_1 ; a_1 )
	\\
	& = s'( x_1 ; a_1 ).
\end{align}
From the $ A_2 $ side, we similarly have
\begin{align}
	\frac{ \partial S }{ \partial x_1} ( x ; a ) 
	\Big|_{\substack{x_2 \to x_1 \\ x \in A_2}}
	& = r s'( (1 + r ) x_1 ; \alpha ) + (1 - r) s'( x_1 ; a_1 )
	\\
	& = s'( x_1 ; a_1 )
\end{align}
and
\begin{align}
	\frac{ \partial S }{ \partial x_2} ( x ; a ) 
	\Big|_{\substack{x_2 \to x_1 \\ x \in A_2}}
	& = s'( (1 + r) x_1 ; \alpha ).
	\\
	& = s'( x_1 ; a_1 ).
\end{align}
Hence, the partial derivatives of $ S $ are continuous on the common of boundary of $ A_1 $ and $ A_2 $.
Similarly, it can be shown that the partial derivatives of $ S $
are continuous on the other common boundaries of the sets $ A_i $.
Hence $ S $ is differentiable on $ \RR^ 2 $.

We now show $ S $ is twice differentiable. 
We need only show the second-order partial derivatives of $ S $
are continuous on the common boundaries of the sets $ A_i $
because $ s $ is twice differentiable. 


Consider the common boundary of $ A_1 $ and $ A_4 $.
Using \eqref{eq:defS11}, we obtain
\begin{equation}
	\frac{ \partial^2 S }{ \partial^2 x_1} ( x ; a ) 
	\Big|_{\substack{x_2 \to 0 \\ x \in A_1}}
	= s''( x_1 ; \alpha ) 
\end{equation}
and
\begin{equation}
	\frac{ \partial^2 S }{ \partial^2 x_1} ( x ; a ) 
	\Big|_{\substack{x_2 \to 0 \\ x \in A_4}}
	= s''( x_1 ; \alpha ).
\end{equation}
Hence $ \partial^2 S / \partial^2 x_1 $ is continuous on the common boundary of $ A_1 $ and $ A_4 $.
Using \eqref{eq:defS22}, we obtain
\begin{align}
	\frac{ \partial^2 S }{ \partial^2 x_2} ( x ; a ) 
	\Big|_{\substack{x_2 \to 0 \\ x \in A_1}}
	& = r^2 s''( x_1 ; \alpha ) + ( 1 - r ) s''(0 ; a_1)
	\\
	& = r^2 s''( x_1 ; \alpha ) - ( 1 - r ) a_1 
\end{align}
where we used $ s''(0; a_1) = -a_1 $,
and
\begin{align}
	\frac{ \partial^2 S }{ \partial^2 x_2} ( x ; a ) 
	\Big|_{\substack{x_2 \to 0 \\ x \in A_4}}
	& = r^2 s''( x_1 ; \alpha ) + ( 1 + r ) s''(0 ; a_2)
	\\
	& = r^2 s''( x_1 ; \alpha ) - ( 1 + r ) a_2 
\end{align}
where we used $ s''(0; a_2) = -a_2 $.
Using \eqref{eq:ralpha}, we have 
\begin{equation}
	(1 - r) a_1 = (1 + r) a_2 = \frac{ 2 a_1 a_2}{a_1 + a_2},
\end{equation}
hence $ \partial^2 S / \partial^2 x_2 $ is continuous on the common boundary of $ A_1 $ and $ A_4 $.

Consider the common boundary of $ A_1 $ and $ A_2 $.
Using \eqref{eq:defS11}, we obtain
\begin{equation}
	\frac{ \partial^2 S }{ \partial^2 x_1} ( x ; a ) 
	\Big|_{\substack{x_2 \to x_1 \\ x \in A_1}}
	= s''( (1 + r) x_1 ; \alpha ) 
\end{equation}
and
\begin{align}
	\frac{ \partial^2 S }{ \partial^2 x_1} ( x ; a ) 
	\Big|_{\substack{x_2 \to x_1 \\ x \in A_2}}
	& = r^2 s''( (1 + r) x_1 ; \alpha ) + (1 - r) s''(x_1 ; a_1)
	\\
	& = r^2 s''( (1 + r) x_1 ; \alpha ) + (1 - r) (1+r) s''( (1+r) x_1 ; \alpha)
	\\
	& = s''( (1 + r) x_1 ; \alpha ) 
\end{align}
where we used \eqref{eq:s2da1}.
Hence $ \partial^2 S / \partial^2 x_2 $ is continuous on the common boundary of $ A_1 $ and $ A_2 $.

Similarly, it can be shown that the remaining partial derivatives are continuous on
the other common boundaries of the sets $ A_i $.
Hence $ S $ is twice continuously differentiable on $ \RR^2 $.

We now show $ S $ is concave
by showing that $ \nabla^2 S(x) $ is negative semidefinite for all $ x \in \RR^2 $.
Specifically, 
we show that $ -[\nabla^2 S(x)] $ is positive semidefinite
by showing that its principal minors are non-negative (Sylvester's criterion).

The determinant of the Hessian of $ S $
\begin{equation}
	\det [\nabla^2 S(x)] =
		\frac{ \partial^2 S }{ \partial^2 x_1 }  
		\frac{ \partial^2 S }{ \partial^2 x_2 } 
		-
		\left( \frac{ \partial^2 S }{ \partial x_1 \partial x_2}  \right)^2
\end{equation}
is given by
\begin{align}
	&
	\det [\nabla^2 S(x; a)] =
	\\
	\nonumber
	&
	\qquad
	\begin{cases}
		(1 - r) \, s''( x_2 ; a_1 ) \, s''( x_1 + r x_2 ; \alpha ), & x \in \SP_1
		\\
		(1 - r) \, s''( x_1 ; a_1 ) \, s''( r x_1 + x_2 ; \alpha ), & x \in \SP_2
		\\
		(1 + r) \, s''( x_1 ; a_2 ) \, s''( r x_1 + x_2 ; \alpha ), & x \in \SP_3
		\\
		(1 + r) \, s''( x_2 ; a_2 ) \, s''( x_1 + r x_2 ; \alpha ), & x \in \SP_4.
	\end{cases}
\end{align} 
Since $ s''(t; \cdot ) $ is negative for all $ t $ and $ \abs{ r } $ is bounded by 1, 
it follows that $ 	\det [\nabla^2 S(x)] $ is non-negative for all $ x \in \RR^2 $.
Hence, the determinant of $ -[\nabla^2 S(x)] $ is non-negative.
Moreover,  $  \partial^2 S / \partial^2 x_i ( x ; a ) \le 0 $ for all $ x \in \RR^2 $
for $ i = 1, 2 $.
Hence, the principal minors of $ -[\nabla^2 S(x)] $ are non-negative.
This proves $ S $ is concave on $ \RR^2 $.
\end{proof}

%
%

\subsection{Proof of Lemma \ref{lemma:Shessian}}
\label{sec:Sdiff}

Using \eqref{eq:defS11}-\eqref{eq:defS12},  the Hessian of $ S $ is given by
\begin{equation}
	\nabla^2 S( x ; a )
	=
	\begin{cases}
	\begin{bmatrix}
		s''( x_1 + r x_2 ; \alpha )
		&
		r s''( x_1 + r x_2 ; \alpha )
		\\
		r s''( x_1 + r x_2 ; \alpha )
		&
		r^2 s''( x_1 + r x_2 ; \alpha ) + (1 - r) \, s''( x_2 ; a_1 )
	\end{bmatrix},
	\ \ 
	&
	x \in \SP_1
	\\[2em]
	\begin{bmatrix}
		r^2 s''( r x_1 + x_2 ; \alpha ) + (1 - r) \, s''( x_1 ; a_1 ) 
		&
		r s''( r x_1 + x_2 ; \alpha )
		\\
		r s''( r x_1 + x_2 ; \alpha )
		&
		s''( r x_1 + x_2 ; \alpha )
	\end{bmatrix},
	&
	x \in \SP_2
	\\[2em]
	\begin{bmatrix}
		r^2 s''( r x_1 + x_2 ; \alpha ) + (1 + r) \, s''( x_1 ; a_2 ) 
		&
		r s''( r x_1 + x_2 ; \alpha )  
		\\
		r s''( r x_1 + x_2 ; \alpha )  
		&
		s''( r x_1 + x_2 ; \alpha ) 
	\end{bmatrix},
	&
	x \in \SP_3
	\\[2em]
	\begin{bmatrix}
		s''( x_1 + r x_2 ; \alpha )  
		&
		r s''( x_1 + r x_2 ; \alpha )  
		\\
		r s''( x_1 + r x_2 ; \alpha )  
		&
		r^2 s''( x_1 + r x_2 ; \alpha ) + (1 + r) \, s''( x_2 ; a_2 ) 
	\end{bmatrix},
	&
	x \in \SP_4.
	\end{cases}
\end{equation}
We write this as
\begin{equation}
	\label{eq:ShessSA}
	\nabla^2 S( x ; a )
	=
	\begin{cases}
	s''(x_1 + r x_2; \alpha)
	\begin{bmatrix}
		1 & r
		\\
		r & r^2
	\end{bmatrix}
	+
	s''(x_2; a_1)
	\begin{bmatrix}
		0	&	0
		\\
		0	&	1 - r	
	\end{bmatrix},
	\ \ 
	&
	x \in \SP_1
	\\[2em]
	s''( r x_1 + x_2 ; \alpha )
	\begin{bmatrix}
		r^2 & r
		\\
		r & 1
	\end{bmatrix}
	+
	s''(x_1 ; a_1)
	\begin{bmatrix}
		1 - r &	0
		\\
		0	&	0
	\end{bmatrix},
	&
	x \in \SP_2
	\\[2em]
 	s''( r x_1 + x_2 ; \alpha ) 
	\begin{bmatrix}
		r^2 & r
		\\
		r & 1
	\end{bmatrix}
	+
	s''(x_1; a_2 )
	\begin{bmatrix}
		1 + r	&	0
		\\
		0	&	0
	\end{bmatrix},
	&
	x \in \SP_3
	\\[2em]
	s''( x_1 + r x_2 ; \alpha )  
	\begin{bmatrix}
		1 & r
		\\
		r & r^2
	\end{bmatrix}
	+
	s''(x_2 ; a_2 )
	\begin{bmatrix}
		0	&	0
		\\
		0	&	1 + r
	\end{bmatrix},
	&
	x \in \SP_4.
	\end{cases}
\end{equation}
%
Using \eqref{eq:s2da1} and \eqref{eq:s2da2}, we write \eqref{eq:ShessSA} as
\begin{equation}
	\label{eq:ShessS}
	\nabla^2 S( x ; a )
	=
	\begin{cases}
	s''(x_1 + r x_2; \alpha)
	\begin{bmatrix}
		1 & r
		\\
		r & r^2
	\end{bmatrix}
	+
	s''( (1 + r) x_2; \alpha)
	\begin{bmatrix}
		0	&	0
		\\
		0	&	1-r^2
	\end{bmatrix},
	\ \ 
	&
	x \in \SP_1
	\\[2em]
	s''( r x_1 + x_2 ; \alpha )
	\begin{bmatrix}
		r^2 & r
		\\
		r & 1
	\end{bmatrix}
	+
	s''( (1 + r) x_1; \alpha)
	\begin{bmatrix}
		1-r^2 &	0
		\\
		0	&	0
	\end{bmatrix},
	&
	x \in \SP_2
	\\[2em]
 	s''( r x_1 + x_2 ; \alpha ) 
	\begin{bmatrix}
		r^2 & r
		\\
		r & 1
	\end{bmatrix}
	+
	s''( (1 - r) x_1; \alpha)
	\begin{bmatrix}
		1-r^2 	&	0
		\\
		0	&	0
	\end{bmatrix},
	&
	x \in \SP_3
	\\[2em]
	s''( x_1 + r x_2 ; \alpha )  
	\begin{bmatrix}
		1 & r
		\\
		r & r^2
	\end{bmatrix}
	+
	s''( (1 - r) x_2; \alpha)
	\begin{bmatrix}
		0	&	0
		\\
		0	&	1-r^2
	\end{bmatrix},
	&
	x \in \SP_4.
	\end{cases}
\end{equation}

\begin{proof}[Proof of Theorem \ref{lemma:Shessian}]

Note that the matrices in \eqref{eq:ShessS} are positive semidefinite
because $ \abs{ r } \le 1 $.
We also have $ -\alpha \le s''(t; \alpha) < 0 $ for all $ t $.
Hence,
\begin{equation}
	\label{eq:nablaSgte}
	\nabla^2 S( x ; a )
	\mge
	-\alpha
	\begin{bmatrix}
		1	&	r
		\\
		r	&	1	
	\end{bmatrix},
	\ \ x \in \RR^2.
\end{equation}
%
%
Using \eqref{eq:ralpha} and \eqref{eq:adiffid}, 
we write \eqref{eq:nablaSgte} as
\begin{equation}
	\nabla^2 S( x ; a )
	\mge
	- \half
	\begin{bmatrix}
		a_1 + a_2 &  a_1 - a_2
		\\
		a_1 - a_2 & a_1 + a_2
	\end{bmatrix}.
\end{equation}
Since $  s''(0; \alpha) = -\alpha $   [see \eqref{eq:s2a}],
we similarly have
\begin{equation}
	\nabla^2 S( 0 ; a )
	=
	- \half
	\begin{bmatrix}
		a_1 + a_2 &  a_1 - a_2
		\\
		a_1 - a_2 & a_1 + a_2
	\end{bmatrix}.
\end{equation}

\end{proof}

\subsection{Proof of Theorem \ref{thm:bounds}}
\label{sec:bounds}

\begin{proof}[Proof of Theorem \ref{thm:bounds}]

Without loss of generality, assume $ a_1 > a_2 $.
From	 \eqref{eq:defpsi} and \eqref{eq:defs}, 
the inequality \eqref{eq:bounds} can be written as
\begin{equation}
	\label{eq:Sbounds}
	s(x_1; a_1) + s(x_2; a_1)
	\le S(x; a) \le 
	s(x_1; a_2) + s(x_2; a_2)
\end{equation}
where $ S $ is given in Definition \ref{def:Sfun}.
First we prove \eqref{eq:Sbounds} for $ x \in \SP_1 $.
Second, we prove it for $ x \in \SP_3 $.
The proofs for $ x \in \SP_2 $ and $ x \in \SP_4 $ are essentially identical by symmetries. 
Note that since $ a_1 \ge a_2 \ge 0 $, we have 
$ a_1 \ge \alpha \ge a_2 $
and
$ 0 \le r \le 1 $.

Let $ x \in \SP_1 $. We seek to prove
\begin{equation}
	\label{eq:A1L}
	s( x_1 + r x_2 ; \alpha ) + (1 - r) \, s( x_2 ; a_1 )
	\ge 
	s(x_1; a_1) + s(x_2; a_1).
\end{equation}
Using \eqref{eq:stalpha}, we have
\begin{align}
	s( x_1 + r x_2 ; \alpha )
	& =
	(1 + r ) \, s\Bigl( \frac{ x_1 + r x_2 }{ 1 + r } ; a_1 \Bigr)
	\\
	& \ge 
	( 1 + r ) \, \Bigl[  \frac{ 1 }{ 1 + r } s( x_1 ; a_1 ) + \frac{ r }{ 1 + r } s( x_2 ; a_1 ) \Bigr]
	\\
	\label{eq:sx1rx2}
	& = 
	s( x_1 ; a_1 ) + r \, s( x_2 ; a_1 )
\end{align}
where the inequality is due to $ s $ being a concave function.
Adding $ (1 - r) \, s( x_2 ; a_1 ) $ to both sides of \eqref{eq:sx1rx2} gives \eqref{eq:A1L}.

Let $ x \in \SP_1 $. 
(That is, $ 0 \le x_2 \le x_1 $
or
$ x_1 \le x_2 \le 0 $.)
We seek to prove
\begin{equation}
	\label{eq:A1U}
	s(x_1; a_2) + s(x_2; a_2)
	\ge
	s( x_1 + r x_2 ; \alpha ) + (1 - r) \, s( x_2 ; a_1 ).
\end{equation}
From \eqref{eq:stalpha} we have
\begin{equation}
	\label{eq:sx1a1}
	s( x_2 ; a_1 ) = \frac{ 1 }{ 1 + r }  \, s( (1+r) x_2 ; \alpha ).
\end{equation}
Suppose $ 0 \le x_2 \le x_1 $.
As $ s $ is concave and $ s(0 ; \cdot \,) = 0 $, we have
\begin{equation}
	\label{eq:sx1alpha}
	s( x_1 ; \alpha ) \ge \frac{ x_1 }{ x_1 + r x_2 } \, s( x_1 + r x_2 ; \alpha )
\end{equation}
because $ 0 \le x_1 \le x_1 + r x_2 $.
Similarly,
\begin{equation}
	\label{eq:s1rx2}
	s( (1+r) x_2 ; \alpha ) \ge 
	\frac{ (1 + r) x_2 }{ x_1 + r x_2 } \, s( x_1 + r x_2 ; \alpha )
\end{equation}
because $ 0 \le (1 + r ) x_2 \le x_1 + r x_2 $.
From \eqref{eq:sx1a1} and \eqref{eq:s1rx2}
it follows that
\begin{equation}
	\label{eq:rsx2a1}
	r s( x_2 ; a_1 ) \ge 
	\frac{ r x_2 }{ x_1 + r x_2 } \, s( x_1 + r x_2 ; \alpha ).
\end{equation}
Adding \eqref{eq:sx1alpha} and \eqref{eq:rsx2a1} gives
\begin{equation}
	\label{eq:sx1rsx2sx1}
	s( x_1 ; \alpha ) + r s( x_2 ; a_1 ) \ge s( x_1 + r x_2 ; \alpha ).
\end{equation}
Adding $ (1 - r ) \, s( x_2 ; a_1 ) $ to both sides of \eqref{eq:sx1rsx2sx1} gives
\begin{equation}
	s( x_1 ; \alpha ) + s( x_2 ; a_1 ) \ge s( x_1 + r x_2 ; \alpha ) + (1 - r ) \, s( x_2 ; a_1 ).
\end{equation}
Since $ \alpha \ge a_2 $, we have $ s( x_1 ; a_2 ) \ge s( x_1 ; \alpha )  $.
Similarly, 
since  $ a_1 \ge a_2 $, we have $ s( x_2 ; a_2 ) \ge s( x_2 ; a_1 )  $.
It follows that \eqref{eq:A1U} holds.
For $ x_1 \le x_2 \le 0 $ the proof is similar.

Let $ x \in \SP_3 $.
We seek to prove
\begin{equation}
	\label{eq:A3L}
	s( r x_1 + x_2 ; \alpha ) + (1 + r) \, s( x_1 ; a_2 )
	\ge 
	s(x_1; a_1) + s(x_2; a_1).
\end{equation}
Suppose $ x_2 \ge -x_1 \ge 0  $.
Since $ 0 \le r \le 1 $ it follows that $ 0 \le r x_1 + x_2 \le x_ 2 $.
As $ s $ is concave and $ s(0 ; \cdot \,) = 0 $, we have
\begin{equation}
	\label{eq:sx1aB}
	s( r x_1 + x_2 ; \alpha ) \ge \frac{ r x_1 + x_2 }{ x_2 } \, s( x_2 ; \alpha )
\end{equation}
and
\begin{equation}
	\label{eq:sx1mx1}
	s( x_1 ; \alpha ) = s( -x_1 ; \alpha) \ge -\frac{ x_1 }{ x_2 } s( x_2 ; \alpha )
\end{equation}
where we use the fact that $ s $ is symmetric. 
Multiplying \eqref{eq:sx1mx1} by $ r $ and adding \eqref{eq:sx1aB}, we have
\begin{equation}
	\label{eq:srxg}
	s( r x_1 + x_2 ; \alpha ) + r s( x_1 ; \alpha ) \ge s( x_2 ; \alpha )
\end{equation}
Since $ a_2 \le \alpha $, we have
$ s( x_1 ; a_2 ) \ge s( x_1 ; \alpha) $.
Multiplying by $ (1 + r) $ and rearranging, we have
\begin{equation}
	\label{eq:srxf}
	(1 + r ) s( x_1 ; a_2 ) - r s( x_1 ; \alpha ) \ge s( x_1 ; \alpha ).
\end{equation}
Adding \eqref{eq:srxg} and \eqref{eq:srxf}, we have
\begin{equation}
	s( r x_1 + x_2 ; \alpha ) + (1 + r) \, s( x_1 ; a_2 )
	\ge 
	s(x_1; \alpha ) + s(x_2; \alpha ).
\end{equation}
Since $ \alpha \le a_1 $, we have $ s( t ; \alpha) \ge s(t ; a_1) $ for all $ t $.
It follows that \eqref{eq:A3L} holds.
For $ x_2 \le -x_1 \le 0 $ the proof is similar.

Let $ x \in \SP_3 $.
We seek to prove
\begin{equation}
	\label{eq:A3U}
	s( r x_1 + x_2 ; \alpha ) + (1 + r) \, s( x_1 ; a_2 )
	\le 
	s(x_1; a_2) + s(x_2; a_2).
\end{equation}
From \eqref{eq:sta2} we have
\begin{equation}
	s( r x_1 + x_2 ; \alpha ) = ( 1 - r )  \, s\Bigl( \frac{ r x_1 + x_2 }{ 1 - r } ; a_2 \Bigr).
\end{equation}
Since $ s $ is symmetric, we have $ s( x_1 ; a_2 ) = s( -x_1 ; a_2 ) $.
Hence
\begin{align}
	s( r x_1 + x_2 ; \alpha ) + r s( x_1 ; a_2 ) 
	& = 
	( 1 - r )  \, s\Bigl( \frac{ r x_1 + x_2 }{ 1 - r } ; a_2 \Bigr)
	+ r s( -x_1 ; a_2 )
	\\
	& \le
	s\Bigl( ( 1 - r ) \frac{ r x_1 + x_2 }{ 1 - r } - r x_1 ; a_2 \Bigr)
	\\
	\label{eq:sx2a2p}
	& =
	s( x_2 ; a_2 )
\end{align}
where the inequality is due to $ s $ being a concave function.
Adding $ s( x_1 ; a_2 ) $ to both sides of \eqref{eq:sx2a2p} gives the
desired inequality \eqref{eq:A3U}.
\end{proof}

\subsection{Proof of Lemma \ref{lemma:tridiag}}
\label{sec:lemma:tridiag}

\begin{proof}[Proof of Lemma \ref{lemma:tridiag}]
We express $ F $ in \eqref{eq:defFdeconv} as
\begin{align}
	F(x) & = \half \norm{ y - H x }_2^2 + \frac{\lam}{2} \sum_n \psi( (x_{n-1}, x_n) ; a) 
	\\
	& =
	\half y\tr y - y\tr H x + \half x\tr \, ( H\tr H - P) \, x 
		+
		g( x )
\end{align}
where $ g \colon \RR^N \to \RR $ is defined as
\begin{equation}
	g(x) = 
		\half x\tr P x + \frac{\lam}{2} \sum_n \psi( (x_{n-1}, x_n) ; a).
\end{equation}
The first three terms are convex. 
Hence $ F $ is convex if $ g $ is  convex.
We express $ P $ as a sum of matrices, each comprising a single $ 2 \times 2 $ block.
\begin{equation}
	P = 
	\cdots
	+
	\begin{bmatrix}
		0 &  & & & \\
		& \half p_0 & p_1  & & \\
		& p_1 & \half p_0 &  & \\
		&  & & 0  & \\
		&  &  &  & 0
	\end{bmatrix}	
	+
	\begin{bmatrix}
		0 & & & & \\
		& 0 & & & \\
		& & \half p_0 & p_1  & \\
		&  & p_1 & \half p_0 & \\
		&  &  & & 0
	\end{bmatrix}	
	+
	\ \cdots \
\end{equation}
Hence $ g $ may be expressed as
\begingroup
\renewcommand{\arraystretch}{0.8}%
\begin{align}
	g(x) & = 
	\half \sum_n 
	\left(
	\big( x_{n-1}, \, x_n \big)
	\begin{bmatrix}
		\half p_0 & p_1 \\
		p_1 & \half p_0
	\end{bmatrix}
	\begin{pmatrix} x_{n-1} \\ x_n \end{pmatrix}
	+
	\lam \psi( (x_{n-1}, x_n) ; a)
	\right)
	\\
	& =
	\half \sum_n f( (x_{n-1}, x_n) )
\end{align}
\endgroup
where $ f \colon \RR^2 \to \RR $ is given by \eqref{eq:defp}.
If $ f $ is convex, then $ g $ is the sum of convex functions and is thus convex itself. 
\end{proof}

\bibliographystyle{plain}

\end{document}